\pgfplotsset{compat=1.12}
\pgfplotsset{
	/pgfplots/my legend/.style={
		legend image code/.code={
			\draw[red](-0.05cm,0cm) -- (0.3cm,0cm);%
		}
	}
}
\newlength\tikzheight
\newlength\tikzwidth
\theoremstyle{plain}
\newtheorem{thm}{Theorem}
\newtheorem{prop}[thm]{Proposition}
\newtheorem{cor}[thm]{Corollary}
\newtheorem{lem}[thm]{Lemma}
\theoremstyle{definition}
\newtheorem{defn}[thm]{Definition}
\newtheorem{exmp}[thm]{Example}
\newtheorem{assum}[thm]{Assumption}
\newtheorem{problem}[thm]{Problem}
\theoremstyle{remark}
\newtheorem{rem}[]{Remark}
\newcommand{\E}{\mathbb{E}}
\newcommand{\bI}{\boldsymbol{1}}
\newcommand{\N}{\mathbb{N}}
\newcommand{\R}{\mathbb{R}}
\newcommand{\Z}{\mathbb{Z}}
\renewcommand{\P}{\mathbb{P}}
\newcommand{\bg}{\bar{g}}
\newcommand{\cX}{\mathcal{X}}
\newcommand{\sX}{\mathscr{X}}
\renewcommand{\th}{\tilde{h}}
\newcommand{\tp}{\tilde{p}}
\newcommand{\tpi}{\tilde{\pi}}
\newcommand{\oG}{\overline{G}}
\newcommand{\bp}{\mathbf{p}}
\newcommand{\iid}{\emph{i.i.d.}\ }
\newcommand{\eq}[1]{\begin{align*}#1\end{align*}}
\newcommand{\EQ}[1]{\begin{equation*}#1\end{equation*}}
\newcommand{\EQN}[1]{\begin{equation}#1\end{equation}}
\newcommand{\abs}[1]{\left\lvert#1\right\rvert}
\newcommand{\set}[1]{\left\{#1\right\}}
\newcommand{\SetIn}[1]{\mathbbm{1}_{\set{#1}}}
\renewcommand{\le}{\leqslant}
\renewcommand{\ge}{\geqslant}
\newcommand{\supp}{\mathrm{supp}}
\algnewcommand{\Inputs}[1]{%
	\State \textbf{Inputs:}
	\Statex \hspace*{\algorithmicindent}\parbox[t]{.8\linewidth}{\raggedright #1}
}
\algnewcommand{\Initialize}[1]{%
	\State \textbf{Initialize:}
	\Statex \hspace*{\algorithmicindent}\parbox[t]{.8\linewidth}{\raggedright #1}
}
\newcommand{\remove}[1]{}
\newcommand{\ubar}[1]{\underaccent{\bar}{#1}}
\begin{document}

\title{Optimal Pricing in Multi Server Systems}

\author{Ashok Krishnan K. S.~~~~ Chandramani Singh~~~~Siva Theja Maguluri~~~~Parimal Parag\footnote{Authors 1,2 and 4 are with the Indian Institute of Science, Bangalore, India and can be reached at $\{$ashokk,chandra,parimal$\}@$iisc.ac.in. Author 3 is with the Georgia Institute of Technology and can be reached at siva.theja@gatech.edu.}}
\date{}
\maketitle

\begin{abstract} 
	\noindent We study optimal service pricing in server farms where customers arrive according to a renewal process and have independent and identical (\iid) exponential service times and \iid valuations of the service. The service provider charges a time varying service fee aiming at maximizing its revenue rate. The customers that find free servers and service fees lesser than their valuation join for the service else they leave without waiting.  We consider both finite server and infinite server farms. We solve the optimal pricing problems using the framework of Markov decision problems. We show that the optimal prices depend on the number of free servers. 
	We propose algorithms to compute the optimal prices. We also establish several properties of the optimal prices and the corresponding revenue rates in the case of Poisson customer arrivals. We illustrate all our findings via numerical results.  	
\end{abstract}

\section{Introduction}
Server farms refer to centrally maintained collections of computer servers or processors intended to provide a service~(or a class of services) to customers. 
Over the past decade, server farms have mushroomed to keep up with the massive demand for both data storage and computation, which continues to increase at breakneck speed. These include services such as AWS EC2 and Azure \cite{butkiene2020survey}.  
Server farms offer a cost-effective alternative to customers wherein they need not spend initial setup and maintenance of a service facility. 
These also allow customers to dynamically scale resource utilization and provide redundancy against failure of specific hardware. 
However, service providers incur considerable costs on hardware, cooling, power, security etc.  
Sustained proliferation of data farms is contingent on providers profiting  through service charges levied on the customers.     

Optimal service pricing is central to the thriving operation of server farms \cite{xu2013dynamic,chi2015fairness}. 
Service providers' earnings come from service charges levied on the customers. 
Different customers may have different utilities~(or, valuation) of the service. 
Also, in a server farm with a waiting queue, a customer's valuation will also depend on its expected waiting time, i.e., on the queue length on its arrival. 
The customers  opt for the service only if their valuation of the service exceeds service charge. 
Clearly service charges directly impact service provider's revenue. 
These along with customers' valuation also determine servers' occupancy and congestion which in turn governs future customers' valuation. 
We thus see that determining optimal prices is a complex problem. 
The problem is further complicated by the fact that service providers cannot a priori assess customers' valuation though they often know value distributions based on historical data.

We consider a multiple server system that offers service to stochastically arriving customers. Customers' service durations are random. 
We do not assume any waiting queue. 
The service provider sells the service to customers at potentially time varying prices.  
Different customers also have different values of the service. 
The service provider does not know customers' values but knows value distribution. 
A customer who finds at least one idle server on arrival opts for the service if and only if its value exceeds the current service charge. 
The customers who find all the servers busy on arrival leave the system without getting served. 
The service provider aims to maximize the average revenue rate by setting appropriate prices. 
We derive optimal prices as a function of the number of idle servers. 
We also study various properties of the optimal prices and optimal revenue rate vis-a-vis total number of servers, customer arrival rate, average service time etc.

\subsection{Our Contribution}
We assume a service provider with $K$ servers. 
We further assume that the customers arrive according to a renewal process, 
having \iid  inter arrival times, \iid exponential service times 
and \iid values for the service.   We formulate the revenue maximization problem. First, we study the uniform pricing problem as a sub optimal but easy to implement policy, and obtain performance bounds for this policy. Then, we obtain the revenue maximizing pricing policy by solving an associated Markov decision problem. We study the properties of the optimal solution, and compare its performance to that of sub optimal policies discussed previously. 
Following is a preview of our main results.
\begin{enumerate}
	\item We observe that for the system with infinitely many servers~(i.e., $K = \infty$), the optimal service prices are uniform, i.e., 
	independent of the number of occupied servers.  
	\item We study optimal uniform pricing for $K$ server system~($K < \infty$). We derive a bound on the revenue rate for the optimal uniform price. 
	We also study asymptotic revenue rates for uniform pricing  as arrival rates are scaled, and show that limiting revenue can go to zero for certain arrival processes.   
	\item For finite server systems, we frame the revenue rate maximization problem as a continuous time Markov control problem. 
	We show that the optimal prices depend on the number of occupied servers, 
	and can obtained via solving a fixed point iteration. 
	\item We study the dependence of optimal prices and corresponding revenue rates on customer arrival rates, service rates, and the number of servers $K$,  in the case of Poisson customer arrivals. We show that the optimal revenue is increasing in arrival rate, service rate and number of servers.  We also show that the revenue per arrival rate, revenue per service rate and revenue per server are decreasing in their respective variables.  
	\item We illustrate all our findings via numerical results. Our numerical studies also provide additional insights on the behaviour of optimal prices with respect to  arrival and service rates.
\end{enumerate}

\subsection{Related Work}
Cloud computing facilities that host a large number of data servers face the problem of optimizing the utilization of these servers. 
Designing an optimal pricing policy is a crucial step in extracting the best possible revenue from the system~\cite{greenberg2008cost,wu2019cloud}. Since a cloud compute facility can be modelled as a bunch of servers with an associated queueing process, the cloud pricing problem can be studied as a problem of pricing in queues. 
One of the earliest works that studied pricing of queues was~\cite{naor1969regulation}, in which the entry of customers to a queue was regulated using tolls. 
Customers can decide to balk or join the queue, after observing the queue size. Such systems are called \emph{observable}. Customers join the system if the difference between their valuation of the job and the cost of waiting exceeds the admission price to the queue.This translates to a threshold type policy - if the queue length is greater than the threshold, the customers balk; else they join. The optimal threshold may vary, depending on whether we want to maximize the total social utility or the revenue. 
It was shown that in \cite{naor1969regulation} that the socially optimal threshold was higher than the threshold for revenue maximization. 
A subsequent work~\cite{edelson1975congestion} shows that, 
the revenue maximizing and socially optimal toll values can be the same, provided a two-part tariff is imposed. 
There have been a number of other works which looked at extensions of~\cite{naor1969regulation} or at related models. 
The effect of the reward variance on the performance is studied in~\cite{larsen1998investigating}. In~\cite{hassin1986consumer}, the author examines whether it is always optimal for a profit maximizing service provider to hide the queue length from an arriving customer. 
It is shown that there are thresholds of arrival rates, below which it is optimal for the service provider to hide the queue state information, and above which it is optimal to reveal. 
These, and numerous other related works, have been summarized in~\cite{HavivBook,hassin2016rational}.

Optimization of revenue in queueing systems has been extensively studied. In one of the first works in this direction,  \cite{low1974optimal}, the author studies optimal pricing for an $M/M/s$ queue with  finite waiting room. He shows that the optimal prices are monotone increasing in the number of customers waiting in the system. A similar monotonicity result for the price as a function of the number of customers, for a similar system but with no waiting room, is shown in \cite{paschalidis2000congestion}. In~\cite{ChenState}, the authors look at the revenue maximization  problem from the perspective of the service provider. 
They are interested in maximizing the expected discounted revenue, while keeping the queueing model of~\cite{naor1969regulation}. 
They obtain a revenue optimizing threshold queue length beyond which entries are not allowed into the queue. This threshold can be computed numerically. All customers who see a waiting queue length smaller than this threshold, pay a price equal to the difference between their valuation and waiting cost.
In~\cite{borgs2014optimal}, an explicit form is derived for the threshold obtained in the previous work, and they characterize the earning rate asymptotically. However, both aforementioned  works provide explicit solutions in the case of fixed service valuation (or simple valuation distributions, such as a valuation which takes two values). They do not provide explicit solutions for valuations with continuous support and general distributions. In \cite{ziya2006optimal}, the authors study optimal pricing in finite capacity queueing systems. However, they consider the sub optimal class of static prices, where the prices charged by the service provider is independent of the number of customers present in the system. They find the best prices in this class, and study its variation with the number of servers. Another work which looks at optimal pricing in finite capacity queueing system is \cite{maoui2007congestion}. Here, under the assumption that the \emph{generalized hazard rate} of the valuation distribution is strictly increasing, the authors obtain the optimal, revenue maximizing policy. However, this assumption does not hold for all distributions. Another work which looks at dynamic pricing in queues is \cite{feinberg2016optimal}. The authors consider a multi server queueing system with finite waiting room. They prove that an optimal monotone policy exists, under the average reward criterion. Existence of an optimal monotone policy for a system with two tandem queues is provided in \cite{wang2019optimal}. Apart from these, there is substantial literature which looks at revenue optimization of different models of queueing systems using an MDP framework and obtain existence and structural results on the optimal policy. These include works such as \cite{xu2013dynamic,yoon2004optimal}. In \cite{ccil2011dynamic}, the authors study optimal pricing for a two class queueing system, and obtain structural results for the optimal prices. A comprehensive survey of different  dynamic pricing techniques is available in \cite{den2015dynamic}. In a recent work \cite{besbes2019static}, the authors prove the existence a static pricing policy that obtains $78.9\%$ of the optimal profit in a system with multiple reusable resources. They assume that the arrivals form a Poisson process, and further, that  the revenue rate is a concave function of the arrival rate. This static pricing policy is obtained as a function of the optimal (state dependent) pricing policy.

Since explicit computation of optimal prices and revenues is difficult in general, a number of works study the pricing and revenue problem in asymptotic regimes, and obtain useful insights. That dynamic pricing can lead to lower \emph{variability} in the revenue of pricing system, as opposed to static pricing, is shown in \cite{kim2018value}. They use an asymptotic analysis to show that the revenue loss due to randomness is lower for dynamic pricing than static pricing, when the customer valuation is random. An asymptotically optimal pricing is obtained in \cite{ata2013congestion} when the customers are delay sensitive but have fixed valuations, for a system with two classes of customers. An asymptotic approach to the dynamic pricing problem is given in \cite{ccelik2008dynamic}, where the solution to an approximating diffusion control problem is used as a solution. Another asymptotic regime is the large capacity regime, explored in \cite{ata2009near}. They aim to minimize the cost to the customers caused by delay, when the  delay cost is a non-linear function of delay. The authors obtain different optimal policies, corresponding to different types of cost functions in this asymptotic regime.

As opposed to works such as  \cite{xu2013dynamic,low1974optimal,paschalidis2000congestion,feinberg2016optimal,wang2019optimal,yoon2004optimal} which show existence of the optimal policy and proceed to obtain structural insights,in this work, we explicitly obtain the optimal price as a solution of a fixed point equation. Moreover we consider arrival processes with general distribution, which generalizes the Poisson assumption in these works. We do not restrict ourselves to the increasing hazard rate assumption of \cite{maoui2007congestion}, and thus have a more general result. Since we assume valuations with a general distribution, our result is more general than \cite{ChenState}. 

\emph{Notation:} 
Before we proceed, we introduce the following notation that we use throughout in this article.  
We denote the set of positive integers by $\N$, 
the set of non-negative integers by $\Z_+$, 
the set of non-negative reals by $\R_+$, 
the set of first $n$ positive integers by $[n]$, 
and the set of non-negative real vectors of length $n$ by $\R^n_+$. A list of some commonly used symbols in this paper is given in Table \ref{tab:Symbols}, for easy reference.
\begin{center}
	\begin{table}
		\centering
		\begin{tabular}{ |c|c| } 
			\hline
			Symbol & Meaning  \\ 
			\hline
			$\lambda$ & arrival rate\\
			\hline
			$\mu$ & service rate of one server\\
			\hline
			$\rho$ & load factor $\frac{\lambda}{\mu}$\\
			\hline
			$V_i$ & valuation of job $i$ \\
			\hline
			$\oG(u)$ & $\P[V_1\geq u]$ \\
			\hline
			$p_k$ & admission price when $k$ jobs are present in the system \\ 
			\hline
			$\bp$ & price vector $(p_0,...,p_{K-1})$\\
			\hline
			$X(t)$ & number of busy servers at time $t$\\
			\hline
			$\cX$ & $ \set{0, \dots, K}$\\
			\hline
			$\cX^{'}$ & $ \set{0, \dots, K-1}$\\
			\hline
			$R(K,\bp)$ & revenue rate for $K$-server system with price vector $\bp$\\
			\hline
			$\bp^*$ & optimal price vector for $K$ server system\\
			\hline
			$p^*_K$ & optimal uniform price for $K$ server system\\
			\hline
			$\pi$ & marginal distribution of number of busy servers seen by arriving customer\\
			\hline
			$\phi(s)$ & Laplace Stieltjes transform of interarrival time\\
			\hline
			$\theta^*$ & optimal revenue rate\\
			\hline
		\end{tabular}
		\label{tab:Symbols}
		\caption{List of common symbols}
	\end{table}
	
\end{center} 
\section{System Model}
\label{sec:sys-model}

We model a compute cluster of $K$ servers as a queuing system, 
where jobs arrive with some service time and a valuation. 
The price of admission into the compute cluster is updated at each job arrival. 
If the admission price is smaller than the valuation, 
then the job is admitted into the system. 
The job pays the admission price to the compute cluster. 
In this case, the compute cluster earns the revenue equal to the admission price, 
and the job leaves upon service completion. 
If admission price is larger than the valuation, 
the job leaves and never returns.  A $5$ server system is depicted in fig. \ref{fig:SysModel}. Two servers are occupied, and a new arrival with valuation $V$ attempts to join the system.

\begin{figure}
	\centering
	\begin{tikzpicture}[scale=0.7]
	\draw[blue] (0,0)--(5,0)--(5,1)--(0,1)--(0,0);
	\draw[blue] (1,0)--(1,1);
	\draw[blue] (2,0)--(2,1);
	\draw[blue] (3,0)--(3,1);
	\draw[blue] (4,0)--(4,1);
	\draw[-{Latex[length=3mm]}] (2.5,4.5)--(2.5,3.5);
	\node (n1) at (2.5,3) {price $p_2$};
	\draw[-{Latex[length=3mm]}] (2.5,2.5)--(2.5,1.3);
	\draw[-{Latex[length=3mm]}] (3.5,3)--(5.5,3);
	\node (n2) at (3.5,2) {$V\ge p_2$};
	\node (n3) at (4.25,3.4) {$V< p_2$};
	\draw[blue,fill=white] (2.5,5) circle (10pt);
	\draw[blue,fill=white] (0.5,0.5) circle (10pt);
	\draw[blue,fill=white] (1.5,0.5) circle (10pt);
	\node (n4) at (4.5,5) {job, $V\sim G$};
	\node (n5) at (6,0.2) {$\exp(\mu)$};
	\node (n6) at (6,0.7) {servers};
\end{tikzpicture}
	\caption{
		We depict a 5 server system. 
		A job with valuation $V$ arrives when two servers are busy. 
		The admission price is $p_2$ and the job joins the system if its valuation $V \ge p_2$.
	}
	\label{fig:SysModel}
\end{figure}
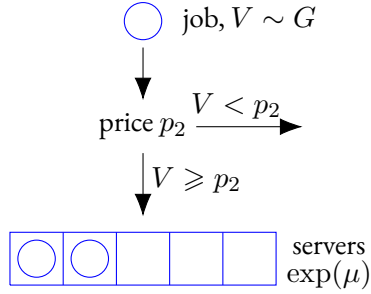

The arrival process is modeled as a renewal process with \iid inter-arrival times having mean  $\frac{1}{\lambda}$. 
Arrival processes are typically modeled by Poisson processes in the literature~\cite{hassin2016rational}. 
Our model is a generalization of this assumption, 
where the sequence of interarrival times $U \triangleq (U_n \in \R_+: n \in \N)$ remains \iid however with a general distribution $F: \R_+ \to [0,1]$. 
The sequence of arrival instants of customers is denoted by $A \triangleq (A_n \in \R_+: n \in \N)$, such that renewal instants $A_n = \sum_{i=1}^nU_i$. 
We denote the counting process associated with the arrival sequence by $N: \R_+ \to \Z_+$ such that 
\EQ{
	N_t \triangleq \sum_{n \in \N}\SetIn{A_n \le t}
} 
is the number of arrivals until time $t$. 

Service time requirements of arriving jobs at compute clusters can be modeled as \iid random variables with a shifted exponential distribution~\cite{bitar2017minimizing,badita2020optimal}, 
with a constant start-up time and a random memoryless service time. 
When the job sizes are large\footnote{When the job sizes are large, the mean of the memoryless service time dominates the constant start-up time.}, exponential distribution is a good approximation for the job service requirement. 
As such, we assume that the service time requirements of arriving jobs is an \iid random sequence $S \triangleq (S_n \in \R_+: n \in \N)$, distributed exponentially with mean $\frac{1}{\mu}$. 

A natural assumption would be to assume that service time requirements affect the job valuation, i.e. higher the service time requirement, larger the valuation. 
However, this assumption has two caveats, the first that the job is aware of its  requirements \emph{apriori}, 
and the second that all jobs are valued in a homogeneous manner. 
In practice, jobs maybe unaware of service time requirements, and they maybe valued heterogeneously.
To keep our model general and analytical tractable, 
we assume that each job has a random \iid positive valuation sampled from a continuous  distribution $G: \R_+ \to [0,1]$. 
We denote the \iid random sequence of job valuations by $V \triangleq (V_n \in \R_+: n \in \N)$ with finite mean $\E V_1$.   
We note that this remains a more general assumption, when compared to constant valuation considered in the literature~\cite{ChenState}. Random valuation models the scenario where the customers are not identical in their assessment of the value of the job. However, they are drawn from a homogeneous population. We assume that the distribution $G$ is known. However, in general it may be necessary to estimate this distribution. For example, see \cite{cong2018developing} where the authors use kernel density estimation methods to estimate $G$.

Recall that we have a finite compute cluster with $K$ servers, 
and we assume that incoming jobs join a unique\footnote{We are not considering redundant replication of jobs, which is an interesting future direction. We will see that our problem remains difficult even without redundancy.} idle\footnote{This model can be extended to the case when jobs join the queue if all $K$ servers are busy. In this case, the price will depend on the number of people existing in the queue, and the state space of possible prices increases.} server if admitted.  
That is, a job leaves if either its valuation is lower than the admission price or all $K$ servers are busy. 
We assume that the server sets a price, that depends only on the number of busy servers at any job arrival instant.  
That is, if we let $k$ be the number of busy servers at a job arrival, then the admission price is $p_k$.  The number of busy servers represents the resource crunch at the service provider. It is  reasonable to expect the service provider to set its prices as a function of this number.
To capture the effect of a job leaving when all $K$ servers are busy, 
we can define the price $p_K \triangleq \infty$. 
Therefore, if there are $k$ busy servers at arrival instant of $n$th job with valuation $V_n$, 
then we can indicate its admission by $\SetIn{V_n \ge p_k}$, 
and the revenue earned by the cluster by $p_k\SetIn{V_n \ge p_k}$.  Note that in our model a customer leaves when no free server is available, or when the price posted is large. Such a  model  is common in the literature and is referred to as a \emph{loss model}  \cite{xu2013dynamic,paschalidis2000congestion,besbes2019static,ellens2012performance}. This corresponds to a situation where the service provider is not a monopoly - there are other service providers to whom the customer can turn to, when the server under consideration is busy or expensive.

We denote the number of busy servers in the system at time $t$ by $X(t) \in  \cX \triangleq \set{0, \dots, K}$. 
Since the admission price depends only on the number of busy servers at the arrival instants, 
it follows from the memoryless property of service times that the number of busy servers specify the system state completely. 
Since we have set $p_K = \infty$, the state space $\cX$ can be reduced to \ $\cX^\prime \triangleq \set{0, \dots, K-1}$. 
We denote a state-dependent price vector by $\bp = (p_0,\dots,p_{K-1}) \in \R_+^{\cX^\prime}$. 
We denote the number of busy servers in the system seen by $n$th arriving customer as $Z_n \triangleq X(A_n^-)$. 
We denote the revenue earned by the cluster until time by $R(t)$, 
which can be written as
\EQN{
	\label{eqn:CumRev}
	R(t) = \sum_{n = 1}^{N_t}\sum_{k=0}^{K-1}p_k\SetIn{V_n \ge p_k}\SetIn{Z_n=k}.
}
The limiting revenue rate for this $K$ server system with the state-dependent price vector $\bp$  is denoted by 
\EQN{
	\label{eqn:LimRevRate}
	R(K,\bp) \triangleq \lim_{t\to\infty}\frac{\E R(t)}{t}.
}
Our main goal is to find the state-dependent pricing vector $\bp$ that maximizes revenue. 
Formally, we solve the following problem. 
\begin{problem} 
	\label{prob:OptPriceVector}
	Find the optimal price vector $\bp^\ast \in \R_+^{\sX^\prime}$ that maximizes the limiting system revenue rate $R(K, \bp)$. 
	That is, we wish to find 
	\EQ{
		\bp^\ast \triangleq \arg\max\set{R(K,\bp):\bp \in \R_+^{\sX^\prime}}.
	}
\end{problem}
Denoting a vector of all ones by $\bI \in \R_+^{\cX'}$ and a fixed price $p \ge 0$, 
we can denote the \emph{uniform price} vector by $p\bI$. 
In this case, the price charged to a customer is independent of the state of the system. 
We next find the uniform price that maximizes the revenue rate. 
\begin{problem} 
	\label{prob:OptPriceUnif}
	Find the uniform price $p$ that maximizes the limiting system revenue rate $R(K, p\bI)$. 
	That is, we wish to find 
	\EQ{
		p^\ast \triangleq \arg\max\set{R(K,p\bI): p \in \R_+}.
	}
\end{problem}
In most systems, calculating the optimal uniform price turns out to be much simpler  than obtaining the optimal price vector $\bp*$. This also provides a benchmark for comparing the optimal policy and quantifying the improvement.
We denote the optimal revenue rate by $R^\ast=R(K,\bp^\ast)$, and compare it to the revenue rate $R(K,p^\ast\bI)$ for the best uniform pricing.
\begin{rem}
	In this paper, we assume that the price charged does not depend on the service time. In contrast, in cloud computing systems such as Amazon EC2 and Microsoft Azure, the customers are charged based on their service time. However, the results in this paper are also applicable in such settings with $p_k$ being interpreted as price per unit service. This can be understood as follows. Suppose $S_i$ is the random service duration of the $i$-th job, then its price is $p_kS_i$, and its expected value is $\frac{p_k}{\mu}$. So, the mean revenue expression, the Bellman's equation characterizing the optimal pricing etc. remain unchanged the same except
	for a constant scaling factor $\frac{1}{\mu}$. Consequently, the optimal pricing analysis and and the properties of the optimal prices also continue to hold.  
\end{rem}

\section{Computation of Revenue Rate}
\label{section:ComputeRevRate} 
Recall that the $n$th customer sees $Z_n = X(A_n^-)$ busy servers in the system. 
We denote the indicator to the event that the job valuation of $n$th customer is higher than the system admission price,  
by $e_n \triangleq \SetIn{V_n \ge p_{Z_n}}$.  
From the memoryless property of service time requirements, 
state dependent admission pricing, 
and the \iid nature of job valuations, 
it follows that the process $((Z_n,e_n) \in \cX \times \set{0,1}: n \in \N)$ evolves as a discrete time Markov chain with finite state space. 

We define $i^\ast \triangleq \min\set{i \in \cX: p_i > \supp(G) \text{ or }p_i = \infty}$. 
Since the valuations are \emph{i.i.d.},  
it can be verified that this Markov chain is irreducible and aperiodic over the reduced state space $\set{0, \dots, i^\ast}\times\set{0,1}$. 
It follows that this reduced Markov chain has a positive invariant distribution $\tpi$. 
For ease of notation, we can extend this distribution $\tpi$ to the entire state space $\cX\times \set{0,1}$ 
by defining $\tilde{\pi}(k,u) = 0$ for all $k > i^\ast$ and $u \in \set{0,1}$. 
Since valuations are \emph{i.i.d.}, 
conditioned on the number of busy servers $Z_n$ seen by the incoming arrival, 
the conditional mean of the random variable $e_n \in \set{0,1}$ is $\E[e_n| Z_n] = \oG(p_{Z_n})$.   
That is, $\oG(p_k)$ is the admission probability of an incoming customer that sees $k$ busy servers. 
Let $\pi \triangleq (\pi_k: k \in \cX)$ be the marginal distribution of the number of busy servers seen by an incoming customer. 
In terms of the marginal distribution $\pi$ and admission probability $\oG(p_k)$, 
we can write the joint distribution $\tpi$ as 
\begin{xalignat}{2}
	\label{eqn:JointDist}
	&\tpi(k,1) = \oG(p_k)\pi_k,&
	&\tpi(k,0) = G(p_k)\pi_k.
\end{xalignat} 

\begin{thm}
	\label{thm:MeanRevenue}
	Given the marginal distribution $\pi$ and the state-dependent arrival rate $\lambda_k \triangleq \lambda\oG(p_k)$, 
	the limiting mean revenue rate for the cluster with state-dependent price vectors $\bp$ is 
	\begin{equation}
	\label{eqn:MeanRevenue}
	R(K,\bp) =\sum_{k=0}^{K-1}\pi_k\lambda_kp_k.
	\end{equation}
\end{thm}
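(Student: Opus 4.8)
The plan is to express the cumulative revenue as the reward accumulated by a regenerative process and to invoke the renewal--reward theorem. Write $R(t)=\sum_{n=1}^{N_t}r_n$, where $r_n\triangleq\sum_{k=0}^{K-1}p_k\SetIn{V_n\ge p_k}\SetIn{Z_n=k}$ is the revenue collected from the $n$th customer (so $r_n=0$ whenever $Z_n=K$). Set $\mathcal G_n\triangleq\sigma\big((U_m)_{m\in\N},(S_m)_{m\in\N},V_1,\dots,V_{n-1}\big)$. The key elementary observation is that $Z_n=X(A_n^-)$, the arrival epoch $A_n$, and indeed the whole trajectory $Z_1,\dots,Z_n$ are $\mathcal G_n$-measurable, whereas $V_n$ is independent of $\mathcal G_n$; hence $\E[r_n\mid\mathcal G_n]=g(Z_n)$, where $g(k)\triangleq p_k\oG(p_k)$ for $k\le K-1$ and $g(K)\triangleq 0$, and $g$ is bounded since $\max_k p_k<\infty$.

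Next I would set up the regeneration. As recalled before the statement, $((Z_n,e_n))$ is a finite irreducible aperiodic chain with positive invariant law $\tpi$; marginalizing out $e_n$, the process $(Z_n)$ is itself an ergodic Markov chain with stationary distribution $\pi$, and state $0$ is positive recurrent with $\pi_0>0$ (the degenerate case in which no customer is ever admitted being trivial, both sides of \eqref{eqn:MeanRevenue} then vanishing). Let $T_1<T_2<\cdots$ be the customer indices at which $Z_{T_j}=0$. Since at any arrival epoch the residual interarrival time is a fresh copy of $U_1$ and the system is then empty, the instants $A_{T_j}$ are regeneration epochs of the non-decreasing, non-negative cumulative-revenue process $R(\cdot)$, with \iid cycle lengths $D_j\triangleq A_{T_{j+1}}-A_{T_j}=\sum_{i=T_j+1}^{T_{j+1}}U_i$ and \iid per-cycle rewards $C_j\triangleq\sum_{n=T_j}^{T_{j+1}-1}r_n$. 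The renewal--reward theorem then gives $R(K,\bp)=\E C_1/\E D_1$. For the numerator, the conditioning identity above yields $\E C_1=\E\big[\sum_{n=T_1}^{T_2-1}g(Z_n)\big]$, and the standard regeneration formulas for the ergodic chain $(Z_n)$ give $\E[T_2-T_1]=1/\pi_0$ and $\E\big[\sum_{n=T_1}^{T_2-1}g(Z_n)\big]=\pi_0^{-1}\sum_k\pi_k g(k)$. For the denominator, $\tau_1\triangleq T_2-T_1$ is a stopping time for the filtration $\mathcal F_i\triangleq\sigma\big(U_{T_1+1},\dots,U_{T_1+i},(V_m)_m,(S_m)_m\big)$ while $U_{T_1+i+1}$ is independent of $\mathcal F_i$ with mean $1/\lambda$, so Wald's identity gives $\E D_1=\E[\tau_1]/\lambda=1/(\lambda\pi_0)$. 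Dividing, the factors $\pi_0^{-1}$ cancel and
\[
R(K,\bp)=\lambda\sum_{k=0}^{K-1}\pi_k g(k)=\sum_{k=0}^{K-1}\pi_k\lambda_k p_k,
\]
which is the claim.

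The step I expect to demand the most care is exactly this bookkeeping around measurability, independence, and stopping times: because $Z_n$ is driven jointly by the interarrival times \emph{and} by $V_1,\dots,V_{n-1}$ (and by the service completions), one may not naively assert that $N_t$ is independent of $Z_n$, nor that $\E D_1=\E[\tau_1]\,\E[U_1]$ by independence of $\tau_1$ from the $U_i$'s; these are rescued, respectively, by the independence of $V_n$ from the strict past $\mathcal G_n$ and by Wald's identity applied with the filtration $\mathcal F_i$. A fully elementary alternative that sidesteps the regenerative renewal--reward theorem is also available: the conditioning identity first gives $\E R(t)=\E\big[\sum_{n=1}^{N_t}g(Z_n)\big]$ (the interchange justified by Tonelli, since $g\ge0$); one then combines the strong law $N_t/t\to\lambda$, the Markov-chain ergodic theorem $\tfrac1m\sum_{n=1}^m g(Z_n)\to\sum_k\pi_k g(k)$, and uniform integrability --- furnished by $\tfrac1t\sum_{n=1}^{N_t}g(Z_n)\le(\max_k p_k)\,N_t/t$ together with $\E[N_t]/t\to\lambda$ --- to pass to the limit under the expectation.
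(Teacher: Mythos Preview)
Your argument is correct. The primary route you take---identifying regeneration epochs at visits of $(Z_n)$ to state $0$, applying the renewal--reward theorem to the cumulative revenue process, and then computing $\E C_1$ via the cycle formula for the ergodic chain and $\E D_1$ via Wald's identity---is genuinely different from the paper's proof. The paper instead works pathwise: it writes $R(t)/t=(N_t/t)\cdot\frac{1}{N_t}\sum_{n\le N_t}p_{Z_n}e_n$, invokes the ergodic theorem for the chain $((Z_n,e_n))$ to get almost-sure convergence of the time average, and then upgrades to convergence of means by showing uniform integrability of $R(t)/t$ through the domination $R(t)/t\le\tfrac{1}{t}\sum_{n\le N_t}V_n$, whose limit is $\lambda\E V_1$ by the renewal--reward theorem. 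Your ``fully elementary alternative'' at the end is essentially this same argument, with the minor difference that you bound by $(\max_k p_k)\,N_t/t$ rather than by $\tfrac{1}{t}\sum V_n$; both dominations work under the standing assumptions. The trade-off is that the paper's proof is shorter and avoids the stopping-time and measurability bookkeeping you (rightly) flag as the delicate part of the regenerative route, whereas your approach makes the cycle structure explicit and would adapt more readily if, say, one wanted higher-moment or variance information about the revenue.
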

\begin{proof}
	From Eq.~\eqref{eqn:CumRev} for the cumulative revenue $R(t)$ until time $t$, 
	we observe that the revenue earned by the cluster for $n$th arriving customer is denoted by 
	$R(Z_n, e_n) = p_{Z_n}e_n$. 
	Since $N_t$ is a counting process for the arrival renewal process, we have  $\lim_{t\to\infty}\frac{N_t}{t}= \lambda$  almost surely. 
	Hence, we can write,
	\begin{equation*}
	\lim_{t\to\infty}\frac{R(t)}{t} = \lambda\lim_{t\to\infty}\frac{1}{N_t} \sum_{n = 1}^{N_t}R(Z_n,e_n).
	\end{equation*}
	By an ergodic theorem for Markov chains (Theorem 1.10. of~\cite{Norris1997}), it follows that, almost surely,
	\begin{equation*}
	\lambda\lim_{N_t\to\infty}\frac{1}{N_t}\sum_{n=1}^{N_t}R(Z_n, e_n) = \lambda\sum_{k=0}^{K-1}p_k\sum_{u \in \set{0,1}}u\tpi(k,u).
	\end{equation*}
	From Eq.~\eqref{eqn:JointDist} for $\tpi(k,1)$ and the definition of state-dependent arrival rate $\lambda_k$, we see that, almost surely,
	\begin{equation*}
	\lim_{t\to\infty}\frac{R(t)}{t} =\sum_{k=0}^{K-1}\pi_k\lambda_kp_k.
	\end{equation*}
	Since the revenue rate is upper bounded by average valuation of all incoming customers, 
	we get $R(t)/t \le (\sum_{n=1}^{N_t}V_n)/t$. 
	Since the valuation sequence $V$ is independent of the interarrival sequence $U$, 
	it follows from the strong law of large numbers~\cite[Theorem 5.4.2]{chung2001course} that the upper bound converges to $\lambda\E V_1$ almost surely. 
	From the renewal reward theorem~\cite[Theorem 3.6.1]{ross1996stochastic}, 
	we see that $\lim_{t\to \infty}(\sum_{n=1}^{N_t}V_n)/t = \lambda\E V_1$. 
	It follows from~\cite[Theorem 4.5.4]{chung2001course}, 
	that $({R(t)}/{t}: t > 0)$ is a uniformly integrable family of random variables. 
	Consequently, we have  
	\begin{equation*}
	\lim_{t\to\infty}\frac{\E R(t)}{t}=\E \lim_{t\to\infty}\frac{R(t)}{t} =\sum_{k=0}^{K-1}\pi_k\lambda_kp_k.
	\end{equation*}
\end{proof}
We will assume that  the optimal price vector defined in Problem~\ref{prob:OptPriceVector} exists and is finite.

\begin{assum}
	\label{assum:UniqueMax}
	There exists a finite optimal price $\bp^*$ such that,
	\EQ{
		R(K,\bp^*)=\max_{\bp\in \R_+^{\sX^\prime}}R(K,\bp).
	}
\end{assum}
We are interested in finding this $\bp^*$ whenever it exists.

\begin{rem}
	Consider the discrete-time discrete-state process $Z \triangleq (Z_n \in \cX: n \in \N)$, 
	that denotes the number of busy servers seen by an incoming arrival. 
	In the $n$th interarrival time $U_n$, 
	the number of departures from $Z_n=k$ busy servers is denoted by random variable $N_k(U_n)$. 
	Conditioned on duration $U_n$ and $Z_n = k$, 
	the probability of $i$ departures is given by 
	\EQ{
		P\set{N_k(U_n) = i} = \binom{k}{i}(1-e^{\mu U_n})^ie^{-(k-i)\mu U_n},
	}
	for $i \in \set{0, \dots, k}$. Since the interarrival time sequence $U$ is \iid  with general distribution $F$, 
	we can write the probability of $0 \le i \le k$ departures from $k$ busy servers, as 
	\begin{equation}
	\label{eqn:NumDeparturesSampledProcess}
	\alpha_{k,i} \triangleq \E P\set{N_k(U_n) = i} = \int dF(x)P\set{N_k(x) = i}.  
	\end{equation}
	Then, we can write the homogeneous probability for the Markov chain $Z$ to transition from state $k \in \cX$ to state $j\in\set{0, \dots, \min\set{k+1,K}}$ as
	\begin{equation}
	\label{eqn:TransitionSampledProcess}
	\oG(p_k) \alpha_{k+1,k+1-j} + G(p_k)\alpha_{k,k-j}. 
	\end{equation}
	Therefore,  one can find the transition probability matrix for the sampled Markov chain $Z$,  
	for any general interarrival distribution $F$. 
	It follows that the limiting distribution of the number of busy servers can be evaluated at least numerically. 
\end{rem}

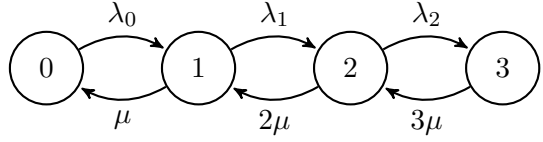
\begin{figure}[hhhh]
	\centering
	\begin{tikzpicture}
	[->,>=stealth',shorten >=1pt,auto,node distance=2.0cm,thick]
	\tikzstyle{every state}=[fill=none,draw=black,text=black]			
	\node[state] (0) at (0,0) {$0$};
	\node[state] (1) [right of=0] {$1$};
	\node[state] (2) [right of=1] {$2$};
	\node[state] (3) [right of=2] {$3$};
	
	\path (0) edge [bend left] 		node {$\lambda_0$} (1) 
	(1) edge [bend left] 		node {$\lambda_1$} (2)
	edge [bend left]		node {$\mu$}		(0)
	(2) edge [bend left] 		node {$\lambda_2$} (3)
	edge [bend left]		node {$2\mu$}		(1)
	(3)	      edge [bend left]		node {$3\mu$}		(2);
\end{tikzpicture}
	\caption{Number of busy servers for state dependent price, where we have three identical servers.}
	\label{Fig:StateDependentPrice}
\end{figure}
\begin{rem}[Kelly~\cite{Kelly2011}]
	\label{thm:EqDist}
	The computation of marginal distribution $\pi$ of the number of busy servers, is straightforward for Poisson arrivals. 
	In this case, the evolution of the number of busy servers forms a birth-death Markov process, 
	with transitions depicted in Fig.~\ref{Fig:StateDependentPrice}.
	Due to PASTA property, the distribution of number of busy servers seen by incoming customers is identical to the stationary distribution $\pi$ of this Markov process. 
	In particular, the distribution $\pi$ is given in terms of the load factor $\rho\triangleq\frac{\lambda}{\mu}$ as 
	\EQN{
		\label{eqn:EqDistK}
		\pi_k = 
		\begin{cases}
			\pi_0\frac{\rho^k}{k!}\prod_{j=0}^{k-1}\oG(p_j),& k \neq 0,\\
			\left[1+\sum_{k=1}^{K}\frac{\rho^k}{k!}\prod_{j=0}^{k-1}\oG(p_j)\right]^{-1}, & k =0.
		\end{cases}
	}
\end{rem}

We showed in Theorem~\ref{thm:MeanRevenue}, 
that the limiting revenue rate $R(K,\bp)$ can be written as a function of state-dependent price vector $\bp$, 
marginal distribution $\pi$, 
and state-dependent arrival rates $\lambda_k$. 
Hence, the optimal price vector depends on the marginal distribution $\pi$. 
This marginal distribution is not easy to compute for the case of general inter arrival distribution, 
and its properties are not easy to establish even when inter arrival times are exponential. 
Therefore, we first consider a simple sub-class of prices, the uniform prices, where the price is independent of the state.

\section{Uniform Pricing}
\label{section:UniformPricing}
In this section, we will consider uniform pricing, not only when the number of servers is finite, but also when it is countably infinite. 
We show that uniform pricing is optimal in the infinite server scenario. 
Hence, optimizing the revenue over the simpler class of uniform prices is a reasonable solution, 
when the number of servers is large. 

From Theorem~\ref{thm:MeanRevenue},  the following corollary is immediate for the revenue rate under uniform pricing.   
\begin{cor} 
	The mean revenue rate for $K$-server system under uniform pricing $\bp = p\bI$ is 
	\EQN{
		\label{eqn:RevenueUniformPriceFiniteServers}
		R(K,  p\bI) = \lambda p\oG(p)(1-\pi_K(p)).
	}
\end{cor}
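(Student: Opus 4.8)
The plan is to specialize Theorem~\ref{thm:MeanRevenue} to the uniform price vector $\bp = p\bI$ and then collapse the resulting sum using the fact that $\pi$ is a probability distribution on $\cX = \set{0,\dots,K}$. This is purely a bookkeeping exercise, so the ``proof'' is really just an unpacking of the general revenue formula.

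First I would substitute $p_k = p$ for every $k \in \cX'$ into the revenue identity \eqref{eqn:MeanRevenue}. Since the state-dependent arrival rate is defined as $\lambda_k = \lambda\oG(p_k)$, the uniform choice makes $\lambda_k = \lambda\oG(p)$, independent of $k$. Hence every summand in $\sum_{k=0}^{K-1}\pi_k\lambda_k p_k$ carries the common factor $\lambda p\oG(p)$, and pulling it out leaves $R(K,p\bI) = \lambda p\oG(p)\sum_{k=0}^{K-1}\pi_k$.

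Next I would invoke that the marginal distribution $\pi = (\pi_k : k \in \cX)$ of the number of busy servers seen by an arriving customer is a genuine probability measure on $\set{0,\dots,K}$, which follows from the invariant-distribution discussion preceding Theorem~\ref{thm:MeanRevenue} together with the extend-by-zero convention for states beyond $i^\ast$. Therefore $\sum_{k=0}^{K-1}\pi_k = 1 - \pi_K$, and writing $\pi_K = \pi_K(p)$ to record the dependence of the stationary distribution on the uniform price $p$ gives $R(K,p\bI) = \lambda p\oG(p)\bigl(1-\pi_K(p)\bigr)$, as claimed.

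There is essentially no obstacle: the only points that deserve a word are the explicit dependence of $\pi$ on $p$ (so the statement is about $\pi_K(p)$, the stationary mass at the full state under the price $p$), and a sanity check that the identity still holds in the degenerate regimes — when $\oG(p)=0$ both sides vanish, and when $p$ exceeds $\supp(G)$ the convention $\pi_k = 0$ for $k > i^\ast$ keeps the telescoping of $\sum_{k=0}^{K-1}\pi_k = 1 - \pi_K(p)$ valid.
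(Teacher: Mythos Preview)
Your proposal is correct and matches the paper's approach exactly: the paper simply states that the corollary is ``immediate'' from Theorem~\ref{thm:MeanRevenue}, and you have spelled out precisely the two-line specialization (set $p_k=p$ so $\lambda_k=\lambda\oG(p)$, then use $\sum_{k=0}^{K-1}\pi_k=1-\pi_K$) that makes it so.
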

The revenue rate depends on the probability $1-\pi_K(p)$ of arriving jobs seeing at least one idle server. 
This probability depends on the uniform price $p$. 
Hence, we  obtain an expression for the blocking probability $\pi_K(p)$
to understand the revenue rate dependence on the uniform price $p$. 
\begin{prop}
	\label{prop:Blocking}
	Consider a $K$-server system with uniform price $\bp = p\bI$. 
	We denote the Laplace Stieltjes transform (LST)  of the interarrival time  $U$ by $\phi: \R \to \R_+$, 
	which is defined by $\phi(s) \triangleq \E[e^{-sU}]$ for all $s \in \R$.
	Defining 
	\EQN{
		\label{eqn:beta}
		\beta_j \triangleq \prod_{m=1}^j\frac{1-\phi(m\mu)}{\phi(m\mu)}, 
	} 
	we can write the limiting probability of finding all $K$ servers busy as
	\EQN{
		\label{eqn:BlockProbUnifPrice}
		\pi_K(p)=(\sum_{j=0}^K \binom{K}{j}\oG(p)^{-j}\beta_j)^{-1}.
	}
\end{prop}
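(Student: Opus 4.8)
The plan is to analyze the embedded discrete-time Markov chain $Z$ of the number of busy servers seen by arriving customers, specialized to the uniform price $\bp = p\bI$, and to compute its stationary distribution $\pi$ by exhibiting a closed-form solution of the balance equations. Under uniform pricing the admission probability is the same constant $\oG(p)$ in every state, which is exactly what makes the chain tractable. First I would write down the one-step transition probabilities from \eqref{eqn:TransitionSampledProcess} with $p_k \equiv p$, namely $\oG(p)\alpha_{k+1,k+1-j} + G(p)\alpha_{k,k-j}$, where $\alpha_{k,i}$ is the probability of $i$ departures (out of $k$ busy servers, each with independent $\exp(\mu)$ clock) during one interarrival time, averaged over $F$ as in \eqref{eqn:NumDeparturesSampledProcess}.

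The key computational step is to identify a workable form of the $\alpha_{k,i}$. Since conditioned on the interarrival length $x$ the number of survivors among $k$ servers is Binomial$(k, e^{-\mu x})$, we have $\alpha_{k,i} = \binom{k}{i}\int (1-e^{-\mu x})^i e^{-(k-i)\mu x}\,dF(x)$. Expanding $(1-e^{-\mu x})^i$ by the binomial theorem turns each $\alpha_{k,i}$ into a linear combination of the moments $\phi(m\mu) = \E[e^{-m\mu U}]$, $m = 0,\dots,k$. This is the step I expect to be the main obstacle: one must manipulate these alternating binomial sums carefully, and the cleanest route is probably to guess the answer and verify it rather than to derive it from scratch. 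Given the form of \eqref{eqn:BlockProbUnifPrice} and of $\beta_j$ in \eqref{eqn:beta}, the natural guess is that the (unnormalized) stationary weights are
\EQ{
	\tilde\pi_k \;=\; \binom{K}{k}\oG(p)^{-(K-k)}\beta_{K-k}
	\quad\text{(equivalently, }\pi_k \propto \binom{K}{k}\oG(p)^{k}\beta_{K-k}\text{ after reindexing),}
}
so that $\pi_K(p) = \tilde\pi_K / \sum_{k=0}^K \tilde\pi_k$, and substituting $j = K-k$ gives precisely $\pi_K(p) = \big(\sum_{j=0}^K \binom{K}{j}\oG(p)^{-j}\beta_j\big)^{-1}$.

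To verify the guess I would plug $\tilde\pi$ into the global balance equations (or, more efficiently, into the detailed-balance / cut equations between levels $\{0,\dots,k\}$ and $\{k+1,\dots,K\}$, since the chain is skip-free to the right: a single arrival increases the count by at most one). The cut across level $k$ equates the "up" flux $\oG(p)\,\tilde\pi_k\,\P[\text{no departure among }k\text{ servers}]$-type terms with the aggregate "down" flux from states above $k$; after substituting $\alpha_{k,i}$ in terms of $\phi(m\mu)$, the identity should reduce to the recursion $\tilde\pi_{k}/\tilde\pi_{k+1} = \frac{\binom{K}{k}}{\binom{K}{k+1}}\cdot \oG(p)^{-1}\cdot\frac{1-\phi((K-k)\mu)}{\phi((K-k)\mu)}$, which is exactly what the product form for $\beta_{K-k}$ encodes. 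Normalizing by $\sum_{k} \tilde\pi_k$ and reading off the $k=K$ coordinate then yields \eqref{eqn:BlockProbUnifPrice}. Finally I would note irreducibility and aperiodicity of $Z$ on $\{0,\dots,K\}$ (inherited from the $i^\ast$ argument in the text, with $i^\ast = K$ here since $\oG(p) > 0$ for $p$ in the support of $G$), which guarantees the stationary distribution is unique so the verified candidate is the answer, and that by PASTA-type reasoning this embedded distribution is the limiting one referred to in \eqref{eqn:RevenueUniformPriceFiniteServers}.
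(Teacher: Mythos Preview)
Your approach is correct in principle but takes a genuinely different route from the paper. The paper does not work with the embedded chain $Z$ directly. Instead it observes that under uniform pricing the admission indicators $e_n=\SetIn{V_n\ge p}$ are \iid Bernoulli$(\oG(p))$, independent of everything else, so the process of busy servers is exactly that of a $G/M/K/K$ loss system fed by the \emph{thinned} renewal stream of admitted customers. It then computes the LST of the thinned interarrival time,
\[
\tilde\phi(x)=\frac{\oG(p)\phi(x)}{1-G(p)\phi(x)},
\]
invokes Palm's formula (Tak\'acs) for the $G/M/K/K$ blocking probability in terms of $\tilde\phi$, and finishes by noting $\tfrac{1-\tilde\phi(m\mu)}{\tilde\phi(m\mu)}=\oG(p)^{-1}\tfrac{1-\phi(m\mu)}{\phi(m\mu)}$, which immediately yields~\eqref{eqn:BlockProbUnifPrice}. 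Your plan---guess the stationary weights and verify the level-cut balance equations of $Z$---is essentially a self-contained re-derivation of Palm's formula in this specific setting; it would go through (your recursion for $\tilde\pi_k/\tilde\pi_{k+1}$ is exactly what $\beta_j$ encodes, and the chain is indeed skip-free upward with $p_{k,k+1}=\oG(p)\phi((k+1)\mu)$), but the alternating binomial sums on the down-flux side are the tedious part you flagged. The paper's thinning reduction avoids all of that. One small correction: the final identification of $\pi$ with the embedded-chain stationary law is just the definition of $\pi$ in the paper, not a PASTA argument (arrivals are not Poisson here); the relevant point is that under uniform pricing $e_n$ is independent of $Z_n$, so states seen by all arrivals and by admitted arrivals have the same limiting law.
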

\begin{proof}
	Recall that interarrival times $U$ for jobs are \iid with common distribution $F$. 
	For uniform pricing $\bp = p\bI$, the admission indicator sequence $e \triangleq (e_n: n \in \N)$ are \iid Bernoulli with $\E e_n = \oG(p)$.  
	We write the number of arrivals between $(n-1)$th and $n$th admission as $T_n$, 
	and observe that $T \triangleq (T_n: n \in \N)$ is an \iid geometric sequence with success probability $\oG(p)$, 
	and independent of inter-arrival sequence.   
	We denote the inter-arrival times for admitted job as $\tilde{U} \triangleq (\tilde{U}_n: n\in \N)$, 
	where $\tilde{U}_{n} \triangleq \sum_{k=1}^{T_n}U_k$. 
	It follows that $\tilde{U}$ is \iid and thinned version of the original arrival process $U$. 
	We can write the LST for the inter-arrival times of admitted jobs in terms of thinning probability $\oG(p)$ as 
	\EQN{
		\label{eqn:LSTAdmittedJobInterArrival}
		\tilde\phi(x)=\sum_{n=1}^{\infty}{\phi(x)}^n{G(p)}^{n-1}\oG(p)=\frac{\oG(p)\phi(x)}{1-G(p)\phi(x)}.
	}	
	We observe that the evolution of the $K$-server pricing system under uniform pricing, 
	is identical to that of a $G/M/K/K$ queueing system with \iid inter-arrival times $\tilde{U}$ and $K$ \iid servers with exponential service rates $\mu$. 
	Therefore, the limiting blocking probability for this stable $G/M/K/K$ system can be written, 
	using the Palm's formula~\cite{takacs1957probability}, as 
	\begin{equation*}
	\pi_K(p) =\frac{1}{\sum_{j=0}^K \binom{K}{j}\prod_{m=1}^j\frac{1-\tilde\phi(m\mu)}{\tilde\phi(m\mu)}}.
	\end{equation*}
	Result follows from Eq.~~\eqref{eqn:LSTAdmittedJobInterArrival}, which implies that $\frac{1-\tilde\phi(m\mu)}{\tilde\phi(m\mu)} = \frac{1}{\oG(p)}\big(\frac{1-\phi(m\mu)}{\phi(m\mu)}\big)$. 
	
\end{proof}
From above proposition, we can make the following observations for the limiting blocking probability. 
\begin{prop}\label{pikayincrease}
	For the finite server system under uniform pricing, the limiting blocking probability is nonincreasing in  
	\begin{enumerate}[(a)]
		\item 
		uniform price for a fixed number of servers,
		\item 
		number of servers for a fixed uniform price. 
	\end{enumerate}
\end{prop}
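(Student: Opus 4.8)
The plan is to argue directly from the closed form for the blocking probability obtained in Proposition~\ref{prop:Blocking}, namely
\[
\pi_K(p) = \Bigl(\sum_{j=0}^K \binom{K}{j}\oG(p)^{-j}\beta_j\Bigr)^{-1}, \qquad \beta_j = \prod_{m=1}^j\frac{1-\phi(m\mu)}{\phi(m\mu)},
\]
and to show in each part that the denominator is monotone in the relevant variable, so that its reciprocal has the opposite monotonicity. Two elementary observations set this up. First, since $U\ge 0$ we have $\phi(m\mu)=\E[e^{-m\mu U}]\in(0,1]$, so every factor $(1-\phi(m\mu))/\phi(m\mu)$ is nonnegative and hence $\beta_j\ge 0$ for all $j$, with $\beta_0=1$ (empty product). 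Second, $\oG$ is a survival function, hence nonincreasing, so $\oG(p)^{-j}$ is nondecreasing in $p$ for every $j\ge 0$ (identically $1$ when $j=0$). If $\oG(p)=0$ the formula degenerates to $\pi_K(p)=0$ and both claims are trivial there, so I will assume $\oG(p)>0$.

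For part (a) I would fix $K$ and take $p\le p'$. For each $0\le j\le K$ the coefficient $\binom{K}{j}\beta_j$ is nonnegative and $\oG(p)^{-j}\le\oG(p')^{-j}$, so summing over $j$ gives $\sum_{j}\binom{K}{j}\oG(p)^{-j}\beta_j\le\sum_{j}\binom{K}{j}\oG(p')^{-j}\beta_j$. Both denominators are at least the $j=0$ term, which equals $1$, hence positive, and taking reciprocals yields $\pi_K(p)\ge\pi_K(p')$.

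For part (b) I would fix $p$, write $x\triangleq\oG(p)^{-1}\ge 1$ and $S_K\triangleq\sum_{j=0}^K\binom{K}{j}x^j\beta_j$ so that $\pi_K(p)=S_K^{-1}$, and apply Pascal's rule $\binom{K+1}{j}=\binom{K}{j}+\binom{K}{j-1}$ followed by the reindexing $i=j-1$:
\[
S_{K+1}=\sum_{j=0}^{K+1}\binom{K}{j}x^j\beta_j+\sum_{j=0}^{K+1}\binom{K}{j-1}x^j\beta_j = S_K + x\sum_{i=0}^{K}\binom{K}{i}x^{i}\beta_{i+1},
\]
using $\binom{K}{K+1}=\binom{K}{-1}=0$. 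Every term in the last sum is nonnegative, so $S_{K+1}\ge S_K\ge 1>0$, whence $\pi_{K+1}(p)=S_{K+1}^{-1}\le S_K^{-1}=\pi_K(p)$.

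The whole argument is bookkeeping once Proposition~\ref{prop:Blocking} is available, so there is no serious obstacle; the only points that genuinely need care are the nonnegativity of the $\beta_j$ (which relies on $\phi$ being the Laplace--Stieltjes transform of a nonnegative random variable, hence bounded by $1$), the degenerate boundary case $\oG(p)=0$, and getting the index shift in Pascal's identity right so that the added mass $S_{K+1}-S_K$ is manifestly a sum of nonnegative terms.
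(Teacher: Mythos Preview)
Your proposal is correct and follows essentially the same approach as the paper: both parts are deduced directly from the explicit formula in Proposition~\ref{prop:Blocking} by showing the denominator is monotone, with part~(a) via monotonicity of $\oG$ and part~(b) via Pascal's identity together with nonnegativity of the $\beta_j$. Your write-up is simply more explicit about the bookkeeping (justifying $\beta_j\ge 0$, handling $\oG(p)=0$, and spelling out the index shift), but the argument is the same.
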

\begin{proof}
	We recall the form of blocking probability $\pi_K(p)$ given in Eq.~\eqref{eqn:BlockProbUnifPrice} for $K$-server system under uniform price $p$. 
	\begin{enumerate}[(a)]
		\item 
		Blocking probability $\pi_K(p)$ is non-decreasing in $\oG(p)$, and the tail probability $\oG(p)$ is non-increasing in uniform price $p$. 
		\item  From the definition of $\beta_j = \prod_{m=1}^j\frac{1-\phi(m\mu)}{\phi(m\mu)}$ in Eq.~\eqref{eqn:beta}, 
		the binomial identity $\binom{K+1}{j} = \binom{K}{j} + \binom{K}{j-1}$, 
		and positivity of all terms, we observe that 
		\EQ{
			\pi_{K+1}(p)^{-1} 
			\ge \pi_K(p)^{-1}.
		}
		
	\end{enumerate}
\end{proof}
\begin{rem}
	Above proposition implies that a higher price leads to a lower blocking probability for the same number of servers, 
	since some jobs will leave without joining. 
	It also implies that block probability is reduced by increasing the number of servers while keeping the price fixed. 
\end{rem}
\begin{defn}
	For a $K$-server system, we can define the optimal uniform price $p^{\ast}_K$ as the price that maximizes the mean revenue rate under uniform pricing. 
	That is, 
	\EQ{
		p^{\ast}_K \triangleq \arg\max_{p > 0}R(K, p\bI) =  \arg\max_{p > 0} \lambda p \oG(p)(1 - \pi_K(p)).
	}
	The corresponding revenue rate for this price is $R(K,p^*_K\bI)$.  
\end{defn}
\subsection{Properties of revenue rate under uniform pricing}
\label{subsec:PropRevRate}

We now show that this optimal revenue increases with the number of servers. 
\begin{lem}
	\label{lem:RevenueMonotoneUniformPricing}
	The mean revenue rate for a finite server system under uniform pricing is increasing in the number of servers. 
\end{lem}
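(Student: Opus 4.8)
The plan is to reduce the statement to the monotonicity of the blocking probability in the number of servers, which is already available from Proposition~\ref{pikayincrease}(b). First I would recall from Eq.~\eqref{eqn:RevenueUniformPriceFiniteServers} that for any fixed price $p \ge 0$,
\[
R(K, p\bI) = \lambda p\, \oG(p)\bigl(1 - \pi_K(p)\bigr),
\]
so that the only dependence on $K$ enters through the factor $1 - \pi_K(p)$. By Proposition~\ref{pikayincrease}(b), $\pi_K(p)$ is nonincreasing in $K$ for each fixed $p$; hence the map $K \mapsto R(K, p\bI)$ is nondecreasing for every fixed $p$.

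Next I would lift this pointwise monotonicity to the optimized revenue rates. Writing $p^*_K$ for the optimal uniform price of the $K$-server system, the chain
\[
R(K+1, p^*_{K+1}\bI) = \max_{p>0} R(K+1, p\bI) \ge R(K+1, p^*_K \bI) \ge R(K, p^*_K\bI) = \max_{p>0} R(K, p\bI)
\]
yields $R(K+1, p^*_{K+1}\bI) \ge R(K, p^*_K\bI)$, which is the claimed monotonicity. (If one prefers not to presuppose that the maximizer is attained, the same argument applied to suprema gives $\sup_{p>0}R(K+1,p\bI)\ge\sup_{p>0}R(K,p\bI)$ directly from the fixed-$p$ monotonicity.)

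To obtain the strict inequality, I would reexamine the step in the proof of Proposition~\ref{pikayincrease}(b). Expanding $\pi_{K+1}(p)^{-1}$ from Eq.~\eqref{eqn:BlockProbUnifPrice} via $\binom{K+1}{j} = \binom{K}{j} + \binom{K}{j-1}$ gives
\[
\pi_{K+1}(p)^{-1} = \pi_K(p)^{-1} + \sum_{j=1}^{K+1}\binom{K}{j-1}\oG(p)^{-j}\beta_j,
\]
and the added sum is strictly positive whenever $\oG(p) \in (0,1]$, since the $j=1$ term alone equals $\oG(p)^{-1}\beta_1 > 0$ by Eq.~\eqref{eqn:beta}. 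Evaluating at $p = p^*_K$ — which is finite by definition and must satisfy $\oG(p^*_K) > 0$, as otherwise $R(K, p^*_K\bI) = 0$ even though choosing a price in the support of $G$ yields positive revenue — we get $\pi_{K+1}(p^*_K) < \pi_K(p^*_K)$ and hence $R(K+1, p^*_K\bI) > R(K, p^*_K\bI)$, upgrading the middle inequality of the chain above to a strict one.

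The only genuine obstacle here is bookkeeping: ensuring $p^*_K$ is well-defined and that $\oG(p^*_K) > 0$, so that the strict inequality is not vacuous; the substance of the argument is an immediate consequence of Proposition~\ref{pikayincrease}(b).
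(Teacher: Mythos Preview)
Your proof is correct and follows essentially the same route as the paper: fix the price $p^*_K$, use Proposition~\ref{pikayincrease}(b) to get $R(K+1,p^*_K\bI)\ge R(K,p^*_K\bI)$, then invoke optimality of $p^*_{K+1}$ for the $(K+1)$-server system. Your added paragraph establishing strict inequality via the positivity of the extra terms in $\pi_{K+1}(p)^{-1}$ is sound and in fact goes slightly beyond the paper, which states ``increasing'' but only writes down the nonstrict chain.
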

\begin{proof}
	Consider the optimal uniform price $p^\ast_K$ for $K$ server system. 
	When this uniform price is applied to a $K+1$ server system, 
	then the mean revenue rate of this system is given by Eq.~\eqref{eqn:RevenueUniformPriceFiniteServers}, as 
	\begin{equation*}
	R(K+1, p^\ast_K\bI)=\lambda p^*_K\oG(p^*_K)(1-\pi_{K+1}(p^*_K)).
	\end{equation*}
	From the monontonicity of blocking probability with the number of servers in Lemma~\ref{pikayincrease}, for finite server system under uniform price, it follows that $\pi_{K+1}(p)\le \pi_K(p)$. 
	Therefore, we have 
	\begin{equation*}
	R(K,p^*_K\bI) 
	\le \lambda p^*_K\oG(p^*_K)(1-\pi_{K+1}(p^*_K))= R(K+1,p^\ast_K\bI).
	\end{equation*}
	Since the optimal uniform price for $K+1$ server system is $p^\ast_{K+1}$, 
	we obtain that $R(K+1, p^\ast_K\bI) \le R(K+1, p^\ast_{K+1}\bI)$ and the result follows. 
\end{proof}
\begin{rem}
	We consider the uniform pricing for the limiting case when the number of servers grow unboundedly large. 
	If the uniform price is $p$, 
	then any arriving job with valuation higher than $p$ joins the system. 
	Since there is no blocking due to unavailability of servers, the mean revenue rate for the limiting system is $\lambda p\oG(p)$. 
	Therefore, the optimal uniform price for infinite server system is given by
	\begin{equation}
	\label{eqn:RevenueUniformPriceInfiniteServers}
	p^\ast_{\infty}\triangleq \arg\max_p p\oG(p).
	\end{equation}
\end{rem}
We next see that the optimal uniform price for infinite server system is lower than the optimal uniform price for any finite server system.
\begin{lem} 
	Let $p^{\ast}_\infty$ defined in Eq.~\eqref{eqn:RevenueUniformPriceInfiniteServers} and $p^{\ast}_K$ defined in Eq.~\eqref{eqn:RevenueUniformPriceFiniteServers} be the optimal uniform prices for infinite and finite $K$-server systems respectively.  
	Then, $p^{\ast}_K \ge p^{\ast}_\infty$ for all finite $K$.
\end{lem}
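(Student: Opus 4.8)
The plan is to exploit the factorization of the finite-server revenue rate. By Eq.~\eqref{eqn:RevenueUniformPriceFiniteServers}, $R(K,p\bI)=\lambda f(p)g(p)$, where $f(p)\triangleq p\oG(p)$ is exactly the (normalized) infinite-server objective maximized by $p^\ast_\infty$ in Eq.~\eqref{eqn:RevenueUniformPriceInfiniteServers}, and $g(p)\triangleq 1-\pi_K(p)$ is, by Proposition~\ref{pikayincrease}(a), non-decreasing in $p$. The point is simply that multiplying the infinite-server objective $f$ by the non-decreasing, positive weight $g$ cannot move its maximizer to the left; I would make this precise by a short argument by contradiction.

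First I would dispose of positivity. We have $f(p^\ast_\infty)=\max_{p>0}p\oG(p)>0$, since any $p$ with $\oG(p)>0$ already gives $p\oG(p)>0$; and from Eq.~\eqref{eqn:BlockProbUnifPrice}, $\pi_K(p)^{-1}=\sum_{j=0}^{K}\binom{K}{j}\oG(p)^{-j}\beta_j$, whose $j=0$ term equals $1$ and whose remaining terms are strictly positive whenever $\oG(p)>0$ and $K\ge 1$ (each $\beta_j>0$ because $0<\phi(m\mu)<1$), so $\pi_K(p)<1$, i.e.\ $g(p)>0$, on that range. Hence $R^\ast=R(K,p^\ast_K\bI)\ge R(K,p^\ast_\infty\bI)=\lambda f(p^\ast_\infty)g(p^\ast_\infty)>0$, which forces $f(p^\ast_K)>0$, and therefore $\oG(p^\ast_K)>0$ and $g(p^\ast_K)>0$.

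Now suppose, for contradiction, that $p^\ast_K<p^\ast_\infty$. Optimality of $p^\ast_\infty$ for $f$ gives $f(p^\ast_\infty)\ge f(p^\ast_K)$, and monotonicity of $g$ together with $p^\ast_\infty>p^\ast_K$ gives $g(p^\ast_\infty)\ge g(p^\ast_K)$; since all four quantities are non-negative, multiplying the two inequalities yields $f(p^\ast_\infty)g(p^\ast_\infty)\ge f(p^\ast_K)g(p^\ast_K)$, i.e.\ $R(K,p^\ast_\infty\bI)\ge R(K,p^\ast_K\bI)$. But $p^\ast_K$ maximizes $R(K,\cdot\bI)$, so this must be an equality; and since $f(p^\ast_K)g(p^\ast_K)>0$ with both factors strictly positive, equality of the products forces $f(p^\ast_\infty)=f(p^\ast_K)$ and $g(p^\ast_\infty)=g(p^\ast_K)$ separately. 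In particular $p^\ast_\infty$ is then also a maximizer of $R(K,\cdot\bI)$, which contradicts $p^\ast_K<p^\ast_\infty$ once $p^\ast_K$ is read as the unique (equivalently, the largest) maximizer, as is implicit in the notation $p^\ast_K\triangleq\arg\max_{p>0}R(K,p\bI)$. Hence $p^\ast_K\ge p^\ast_\infty$.

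The only point I would be careful about is this degenerate case where the two objectives share a whole interval of common maximizers and every inequality above is tight; I expect it to be the sole (minor) obstacle. It is exactly what the "largest maximizer" convention resolves, and under an outright uniqueness assumption on the optimizers the equality $R(K,p^\ast_\infty\bI)=R(K,p^\ast_K\bI)$ with $p^\ast_\infty\neq p^\ast_K$ is itself the contradiction; everything else is a one-line manipulation.
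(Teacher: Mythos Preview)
Your proof is correct and follows essentially the same approach as the paper's, using the factorization $R(K,p\bI)=\lambda\cdot p\oG(p)\cdot(1-\pi_K(p))$ together with Proposition~\ref{pikayincrease}(a); the paper simply runs the inequalities directly rather than by contradiction, chaining $(1-\pi_K(p^\ast_K))\,p^\ast_\infty\oG(p^\ast_\infty)\ge(1-\pi_K(p^\ast_K))\,p^\ast_K\oG(p^\ast_K)\ge(1-\pi_K(p^\ast_\infty))\,p^\ast_\infty\oG(p^\ast_\infty)$ to obtain $\pi_K(p^\ast_\infty)\ge\pi_K(p^\ast_K)$ and then invoking the monotonicity of $\pi_K$ at the final step. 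Your explicit handling of the tie case via a largest-maximizer convention is in fact more careful than the paper's, which implicitly needs the same caveat to invert the merely nonincreasing map $p\mapsto\pi_K(p)$.
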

\begin{proof} 
	Let $\pi_K(p^{\ast}_K)$ and $\pi_K(p^{\ast}_\infty)$ be the blocking probabilities for $K$-server system with uniform prices $p^{\ast}_K\bI$ and $p^{\ast}_\infty\bI$ respectively. 
	From the definition of optimal uniform price for infinite server system, 
	it follows that $p^\ast_\infty\oG(p^\ast_\infty) \ge p^\ast_K\oG(p^\ast_K)$. 
	From the definition of optimal uniform price for finite server systems, 
	it follows that 
	\begin{align*}
	(1-\pi_K(p^\ast_K))p^\ast_\infty\oG(p^{\ast}_\infty) 
	&\ge (1-\pi_K(p^{\ast}_K))p^{\ast}_{K}\oG(p^{\ast}_{K}) \\
	&\ge (1-\pi_K(p^{\ast}_{\infty}))p^{\ast}_\infty\oG(p^{\ast}_\infty).
	\end{align*}
	Therefore, we have $\pi_K(p^{\ast}_\infty) \ge \pi_K(p_{K}^{\ast})$. 
	The result follows from the monotone decrease of blocking probability $\pi_K$ in uniform price $p$ From Lemma~\ref{pikayincrease}. 
\end{proof}
We now establish that the mean revenue rate in the infinite server system is maximized by the optimal uniform pricing.
\begin{prop}
	The optimal uniform pricing $p^*_{\infty}$ maximizes the mean revenue rate for infinite server system.
\end{prop}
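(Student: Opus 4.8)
The plan is to read the statement in its intended strong form: $p^\ast_\infty\bI$ is optimal not just among uniform prices (which is true by definition of $p^\ast_\infty$) but among \emph{all} state-dependent price vectors for the infinite server system. Combined with the preceding remark, which gives $R(\infty,p^\ast_\infty\bI)=\lambda p^\ast_\infty\oG(p^\ast_\infty)$, it therefore suffices to show $R(\infty,\bp)\le \lambda p^\ast_\infty\oG(p^\ast_\infty)$ for every $\bp=(p_k:k\in\Z_+)\in\R_+^{\Z_+}$. First I would note that $p^\ast_\infty=\arg\max_{p>0}p\oG(p)$ is well defined and finite: $p\oG(p)$ is continuous since $G$ is, it vanishes at $p=0$, and $p\oG(p)=p\,\P[V_1\ge p]\le\E[V_1\SetIn{V_1\ge p}]\to0$ as $p\to\infty$ because $\E V_1<\infty$, so the supremum is attained at some finite positive price.

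Next I would extend Theorem~\ref{thm:MeanRevenue} to $K=\infty$. Under an arbitrary price vector $\bp$, the number of busy servers still evolves as a Markov chain on $\Z_+$, now with effective arrival rate $\lambda_k=\lambda\oG(p_k)\le\lambda$ in state $k$; this process is pathwise dominated by the $G/M/\infty$ queue fed by the full renewal stream of rate $\lambda$, which is positive recurrent for every finite $\lambda$, so its embedded-at-arrivals chain has a proper stationary law. Hence the marginal distribution $\pi=(\pi_k:k\in\Z_+)$ of busy servers seen by an arrival is a genuine probability distribution, $\sum_{k\ge0}\pi_k=1$, and the ergodic-theorem/uniform-integrability argument of Theorem~\ref{thm:MeanRevenue} carries over verbatim to give $R(\infty,\bp)=\sum_{k\ge0}\pi_k\lambda_kp_k=\lambda\sum_{k\ge0}\pi_k\,p_k\oG(p_k)$. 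Then, since $p_k\oG(p_k)\le p^\ast_\infty\oG(p^\ast_\infty)$ for every $k$ by definition of $p^\ast_\infty$,
\[
R(\infty,\bp)=\lambda\sum_{k\ge0}\pi_k\,p_k\oG(p_k)\le\lambda p^\ast_\infty\oG(p^\ast_\infty)\sum_{k\ge0}\pi_k=\lambda p^\ast_\infty\oG(p^\ast_\infty)=R(\infty,p^\ast_\infty\bI),
\]
with equality at $\bp=p^\ast_\infty\bI$, which proves the proposition.

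The only genuinely delicate step is the justification that $\pi$ sums to one for an arbitrary price vector, i.e.\ that no probability mass escapes to infinity in the infinite server chain; state-dependent thinning makes the admitted-arrival stream non-renewal, so rather than a direct Palm/thinning computation as in Proposition~\ref{prop:Blocking} I would handle this by the stochastic-domination and stability comparison with the full-rate $G/M/\infty$ queue sketched above. Everything after that is a one-line consequence of the defining property of $p^\ast_\infty$, so I expect that properness-of-$\pi$ argument to be where essentially all the work lies.
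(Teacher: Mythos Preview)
Your argument is correct, but the paper takes a different route that sidesteps precisely the point you flag as delicate. Instead of working directly in the infinite server system and establishing that the embedded-at-arrivals chain on $\Z_+$ has a proper stationary law, the paper stays with finite $K$ throughout: for every finite $K$ it uses the already-available revenue formula~\eqref{eqn:MeanRevenue} to get the sandwich
\[
R(K,p^\ast_K\bI)\;\le\;R(K,\bp^\ast)\;\le\;\lambda\max_{k\in\cX}p_k\oG(p_k)\;\le\;\lambda p^\ast_\infty\oG(p^\ast_\infty),
\]
then invokes Lemma~\ref{lem:RevenueMonotoneUniformPricing} (monotonicity of $R(K,p^\ast_K\bI)$ in $K$) and lets $K\to\infty$. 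This limiting argument avoids any stability or tightness analysis for the infinite-state chain, which is exactly where you expected the work to lie. Your approach is more intrinsic and arguably more transparent about what the infinite server system actually is, but it trades the cheap limit for a coupling/domination argument that, while standard, is not entirely free (e.g.\ irreducibility can fail if $\oG(p_k)=0$ for some $k$, and one must argue that pathwise domination by a tight process gives a proper limiting law for the embedded chain). The paper's version buys brevity by reusing the finite-$K$ machinery already in place.
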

\begin{proof}
	By definition of the optimal revenue rate for $K$ servers, 
	the optimal revenue rate $R(K, \bp^\ast)$ with state dependent dependent pricing $\bp^\ast$ is greater than the maximum revenue rate $R(K,p_K^\ast\bI)$ under uniform pricing $p_K^\ast\bI$. 
	That is, 
	\EQ{
		R(K, p^\ast_K\bI) 
		\le R(K,\bp^*).
	} 
	From Eq.~\eqref{eqn:MeanRevenue} we obtain that the optimal mean revenue is a convex combination of $(\lambda p_k\oG(p_k): k \in \cX)$, 
	where the optimal price vector is $\bp^\ast=(p_0,..,p_{K-1})$. 
	From the definition of $p^\ast_\infty$ in Eq.~\eqref{eqn:RevenueUniformPriceInfiniteServers}, 
	we get  
	\begin{equation*}
	R(K,\bp^*) 
	\le \lambda\max_{k \in \cX} p_k\oG(p_k)
	\le \lambda p_{\infty}^\ast\oG(p_{\infty}^\ast).
	\end{equation*}
	From Lemma~\ref{lem:RevenueMonotoneUniformPricing}, 
	the optimal revenue rate $R(K,p_K^*)$ is monotonically increasing in the number of servers $K$. 
	The result follows from taking the limit $K \to \infty$ in the above equation. 
\end{proof}
Thus, for a system with a large number of servers, choosing the optimal uniform price, is close to optimal. 
We note that system state for a finite server system can equivalently be represented by the number of idle servers. 
In an infinite server system, the number of idle servers is always infinite, 
and hence state dependent pricing reduces to state independent pricing. 
With this view, it is expected that optimal pricing for an infinite server system will be uniform.  
We next bound the optimal revenue rate in terms of the maximum revenue rate under uniform pricing.
\begin{lem} 
	\label{lem:UnifBnd}
	Let $p^*_\infty$ and $p^{\ast}_K$ be optimal uniform prices of infinite and finite $K$-server systems, 
	and let $\bp^{\ast}$ be the optimal state dependent price vector for the $K$ server system. 
	If the blocking probability of the $K$-server system under uniform price $p^{\ast}_\infty$ is denoted by $\pi_K(p^{\ast}_\infty)$, then 
	\EQ{
		R(K,p^{\ast}_K\bI) 
		\le R(K,\bp^{\ast}) 
		\le \dfrac{R(K,p^{\ast}_K\bI)}{1-\pi_K(p^{\ast}_\infty)}.
	}
\end{lem}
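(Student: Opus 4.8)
The lower bound $R(K,p^{\ast}_K\bI)\le R(K,\bp^{\ast})$ is immediate, since the uniform price vector $p^{\ast}_K\bI$ is a feasible choice in Problem~\ref{prob:OptPriceVector}, whose optimum is $R(K,\bp^{\ast})$. So the whole content is the upper bound, and the plan is to start from the expression in Eq.~\eqref{eqn:MeanRevenue}, namely $R(K,\bp^{\ast})=\sum_{k=0}^{K-1}\pi_k\lambda p_k\oG(p_k)$ where $\pi$ is the marginal (stationary) distribution induced by $\bp^{\ast}$. I would bound each per-state revenue term $p_k\oG(p_k)$ by its largest possible value $p^{\ast}_\infty\oG(p^{\ast}_\infty)$, exactly as in the proof of the preceding proposition, so that $R(K,\bp^{\ast})\le \lambda p^{\ast}_\infty\oG(p^{\ast}_\infty)\sum_{k=0}^{K-1}\pi_k = \lambda p^{\ast}_\infty\oG(p^{\ast}_\infty)(1-\pi_K)$, where here $\pi_K$ is the blocking probability under the \emph{optimal state-dependent} policy $\bp^{\ast}$.

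The next step is to replace the uniform factor $\lambda p^{\ast}_\infty\oG(p^{\ast}_\infty)$ by something comparable to $R(K,p^{\ast}_K\bI)$. Using Eq.~\eqref{eqn:RevenueUniformPriceFiniteServers}, $R(K,p^{\ast}_\infty\bI)=\lambda p^{\ast}_\infty\oG(p^{\ast}_\infty)(1-\pi_K(p^{\ast}_\infty))$, so $\lambda p^{\ast}_\infty\oG(p^{\ast}_\infty)=R(K,p^{\ast}_\infty\bI)/(1-\pi_K(p^{\ast}_\infty))\le R(K,p^{\ast}_K\bI)/(1-\pi_K(p^{\ast}_\infty))$, the last inequality because $p^{\ast}_K$ is the optimal uniform price for the $K$-server system. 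Combining, $R(K,\bp^{\ast})\le (1-\pi_K)\,R(K,p^{\ast}_K\bI)/(1-\pi_K(p^{\ast}_\infty))\le R(K,p^{\ast}_K\bI)/(1-\pi_K(p^{\ast}_\infty))$ since $1-\pi_K\le 1$, which is the claimed bound.

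The one subtlety — and the step I expect to need the most care — is the appearance of two different blocking probabilities: the $\pi_K$ coming from the optimal state-dependent policy $\bp^{\ast}$, versus $\pi_K(p^{\ast}_\infty)$ under a uniform price. The clean way out is simply to discard the factor $1-\pi_K\le 1$ after the first bounding step, so that no comparison between these two quantities is needed; the bound $R(K,\bp^{\ast})\le \lambda p^{\ast}_\infty\oG(p^{\ast}_\infty)$ already holds (indeed it was proved in the previous proposition), and then only the uniform-price identity and the optimality of $p^{\ast}_K$ are invoked. An alternative, slightly tighter route would be to argue $\pi_K\le \pi_K(p^{\ast}_\infty)$ via a stochastic-dominance / coupling comparison of the state-dependent chain against the uniform-price chain, but that is more work than the statement requires, so I would present the simpler version. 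Everything else is algebra using results already established (Theorem~\ref{thm:MeanRevenue}, the infinite-server optimality argument, and the definition of $p^{\ast}_K$).
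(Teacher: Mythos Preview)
Your proof is correct and follows essentially the same route as the paper: both rely on the bound $R(K,\bp^{\ast})\le \lambda p^{\ast}_\infty\oG(p^{\ast}_\infty)$ from the preceding proposition, then rewrite $\lambda p^{\ast}_\infty\oG(p^{\ast}_\infty)=R(K,p^{\ast}_\infty\bI)/(1-\pi_K(p^{\ast}_\infty))$ and dominate $R(K,p^{\ast}_\infty\bI)$ by $R(K,p^{\ast}_K\bI)$. The ``clean way out'' you identify --- dropping the factor $1-\pi_K$ rather than comparing the two blocking probabilities --- is exactly what the paper does, so your presentation can skip the intermediate $(1-\pi_K)$ step entirely and just invoke the previously established inequality.
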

\begin{proof}
	The first inequality follows from the definition of the optimal revenue rate. 
	To prove the second inequality, recall that optimal revenue rate under uniform pricing is increasing in the number of servers, i.e. $R(K,\bp^\ast)\le\lambda p_{\infty}^*\oG(p_{\infty}^*)$. 
	Multiplying both sides by $1-\pi_K(p_{\infty}^\ast)$, we see that,
	\begin{equation*}
	(1-\pi_K(p^\ast_\infty))R(K,\bp^\ast) 
	\le 
	R(K, p^\ast_\infty\bI).
	\end{equation*}
	Since the right hand side term is the revenue rate of a $K$ server system with uniform price $p_{\infty}^*$, 
	it can be upper bounded by maximum revenue rate $R(K,p^\ast_K\bI)$ under optimal uniform price $p_K^\ast$.
\end{proof}	
The above lemma implies that the optimal revenue rate converges to maximum revenue rate under uniform pricing as the number of servers $K$ grows large. 
Further, the difference between two revenue rates decreases at least as fast as $\frac{1}{K}$.  
\begin{cor} 
	In terms of $\beta_1=\frac{1-\phi(\mu)}{\phi(\mu)}$ defined in Eq.~\eqref{eqn:beta}, 
	we can upper bound the difference between the optimal revenue rate and the maximum revenue rate under uniform pricing as 
	\begin{equation*}
	R(K,\bp^{\ast}) - R(K,p^\ast_K\bI)
	\le \frac{1}{\beta_1 K}R(K,p^{\ast}_K\bI).  
	\end{equation*}
\end{cor}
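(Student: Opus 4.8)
The plan is to start from the second inequality in Lemma~\ref{lem:UnifBnd}, namely $R(K,\bp^{\ast}) \le R(K,p^{\ast}_K\bI)/(1-\pi_K(p^{\ast}_\infty))$, and turn it into additive form. Subtracting $R(K,p^{\ast}_K\bI)$ from both sides gives
\[
R(K,\bp^{\ast}) - R(K,p^{\ast}_K\bI) \;\le\; \frac{\pi_K(p^{\ast}_\infty)}{1-\pi_K(p^{\ast}_\infty)}\, R(K,p^{\ast}_K\bI),
\]
so it suffices to show that $\dfrac{\pi_K(p^{\ast}_\infty)}{1-\pi_K(p^{\ast}_\infty)} \le \dfrac{1}{\beta_1 K}$, equivalently that $\dfrac{1-\pi_K(p^{\ast}_\infty)}{\pi_K(p^{\ast}_\infty)} \ge \beta_1 K$.

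Next I would substitute the explicit blocking-probability formula of Proposition~\ref{prop:Blocking}, which reads $\pi_K(p)^{-1} = \sum_{j=0}^K \binom{K}{j}\oG(p)^{-j}\beta_j$. Since $\beta_0 = 1$ (empty product) the $j=0$ term equals $1$, so
\[
\frac{1-\pi_K(p)}{\pi_K(p)} \;=\; \pi_K(p)^{-1} - 1 \;=\; \sum_{j=1}^K \binom{K}{j}\oG(p)^{-j}\beta_j .
\]
Now I would keep only the $j=1$ term, which equals $K\,\oG(p)^{-1}\beta_1$. Because $\oG(p) = \P[V_1\ge p]\le 1$ we have $\oG(p)^{-1}\ge 1$, and each remaining term ($j\ge 2$) is nonnegative since $\beta_j\ge 0$ and the binomial coefficients and $\oG(p)^{-j}$ are positive. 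Hence $\frac{1-\pi_K(p)}{\pi_K(p)} \ge K\beta_1$ for every price $p$, in particular for $p = p^{\ast}_\infty$, which is exactly the bound needed. Note $\beta_1 = (1-\phi(\mu))/\phi(\mu) > 0$ because $\phi(\mu) = \E[e^{-\mu U}]\in(0,1)$ for a non-degenerate interarrival time, so all divisions are legitimate.

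There is essentially no obstacle here: the corollary is a one-line consequence of Lemma~\ref{lem:UnifBnd} together with discarding all but the $j=1$ term in the series for $\pi_K(p)^{-1}$. The only points requiring a moment's care are that $\oG$ is a tail probability and hence $\le 1$ (giving $\oG(p)^{-1}\ge 1$), and that every $\beta_j$ is nonnegative; both are immediate from the definitions in Eq.~\eqref{eqn:beta} and Proposition~\ref{prop:Blocking}.
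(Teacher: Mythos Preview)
Your proposal is correct and follows essentially the same approach as the paper: start from Lemma~\ref{lem:UnifBnd}, invoke the explicit formula for $\pi_K(p)$ from Proposition~\ref{prop:Blocking}, discard all but the $j\in\{0,1\}$ terms in the sum, and use $\oG(p)\le 1$. The only cosmetic difference is that the paper first bounds $\pi_K$ and then computes $1/(1-\pi_K)$, whereas you directly bound $\pi_K^{-1}-1=(1-\pi_K)/\pi_K$; these are algebraically equivalent.
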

\begin{proof} 
	The blocking probability of $K$ server system given in Proposition~\ref{prop:Blocking} under uniform price $p^\ast_\infty$, can be upper bounded as 
	\begin{equation*}
	\pi_K(p_{\infty}^*)=\dfrac{1}{\sum_{j=0}^K \binom{K}{j}\oG(p_{\infty}^*)^{-j}\beta_j}\le \frac{1}{1+K\oG(p_{\infty}^*)^{-1}\beta_1}.
	\end{equation*}
	The upper bound follows by taking only two positive terms corresponding to $j \in \set{0,1}$ in the summation for $j \in \cX$. 
	Therefore, using the fact that $\oG(p_\infty) \le 1$, we get
	\begin{equation*}
	\frac{1}{1-\pi_K(p_{\infty}^\ast)} 
	\le 1+\frac{1}{K\oG(p_{\infty}^\ast)^{-1}\beta_1} 
	\le 1+\frac{1}{\beta_1K}.
	\end{equation*}
	We obtain the result by substituting this expression in the upper bound for optimal revenue rate $R(K,\bp^\ast)$ in Lemma~\ref{lem:UnifBnd}.
\end{proof}
\begin{rem}\label{rem:UnifBoundPoiss}
	For Poisson arrivals, $\beta_1=\frac{1-\phi(\mu)}{\phi(\mu)}=\frac{\mu}{\lambda}=\frac{1}{\rho}$, 
	and hence $R(K,\bp^*)\le (1+\frac{\rho}{K})R(K,p_K^\ast\bI)$. It is clear that for a large enough $K$, the optimal uniform price is a reasonable substitute for the optimal price. However, the bound is loose for smaller values of $K$ and higher values of $\rho$, corresponding to a high arrival rate.
\end{rem}

\subsection{Asymptotic Behavior of Revenue Rate}
\label{subsec:RevRateAsymptote}

We next address the question of maximum revenue rate scaling under uniform pricing as the arrival rate increases to infinity. 
For a $K$ \iid server system each serving at an exponential rate $\mu$, the maximum system service rate is $K\mu$. 
Therefore, for a uniform price system with $\bp=p\bI$, the maximum revenue cannot exceed $pK\mu$. 
We investigate whether we can meet this upper bound by driving the arrival rate to infinity. 
We observe that this is not true for all arrival distributions. 
In fact, for certain interarrival distributions, the revenue rate goes to zero as arrival rate increases. 

Recall that $\phi: \R_+ \to \R_+$ denotes the Laplace Stieltjes transform of the \iid job interarrival times. 
To begin with, we prove the following technical lemma. 
\begin{lem}
	For the $K$-server system under uniform pricing $\lim_{\lambda\to\infty}\phi(\theta) = 1$.    
\end{lem}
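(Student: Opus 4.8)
The plan is to estimate $1 - \phi(\theta)$ directly, rather than to pass through any convergence of distributions. Recall that $\phi(\theta) = \E[e^{-\theta U}]$, where $U$ denotes a generic interarrival time with $\E[U] = \tfrac{1}{\lambda}$, and that scaling the arrival rate $\lambda$ to infinity is, by the model, exactly the statement that $\E[U] = \tfrac1\lambda \to 0$. Fix any $\theta \ge 0$; this is the only regime in which the LST is used in what follows (e.g.\ at $\theta = m\mu$ in Proposition~\ref{prop:Blocking}).

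First I would invoke the elementary inequality $1 - e^{-x} \le x$, valid for all $x \ge 0$ (it is just a rearrangement of $e^{-x} \ge 1 - x$). Applying it pointwise with $x = \theta U \ge 0$ and taking expectations yields
\EQ{
	0 \le 1 - \phi(\theta) = \E\big[1 - e^{-\theta U}\big] \le \theta\,\E[U] = \frac{\theta}{\lambda}.
}
Letting $\lambda \to \infty$ and applying the squeeze theorem gives $\lim_{\lambda \to \infty}\phi(\theta) = 1$, as claimed. (In fact this shows the quantitative rate $1 - \phi(\theta) = O(1/\lambda)$, which may be convenient later when analysing $\pi_K(p)$ as $\lambda \to \infty$.)

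There is essentially no serious obstacle here; the only points that need a moment's care are (i) the sign of $\theta$, since both the bound $1 - e^{-\theta U} \le \theta U$ and the nonnegativity $1 - \phi(\theta) \ge 0$ rely on $\theta \ge 0$, and (ii) the interpretation that ``the arrival rate increases to infinity'' means precisely $\E[U] = 1/\lambda \to 0$, which is part of the renewal model and not an extra assumption. If one prefers to avoid the explicit inequality, an alternative is to note that $U \to 0$ in probability by Markov's inequality ($\P[U > \varepsilon] \le \tfrac{1}{\lambda\varepsilon}$), hence $e^{-\theta U} \to 1$ in probability, and then conclude by bounded convergence since $0 \le e^{-\theta U} \le 1$; but the one-line bound above is cleaner and sharper.
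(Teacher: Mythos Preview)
Your proof is correct and follows the same sandwich strategy as the paper: bound $\phi(\theta)$ above by $1$ and below by something tending to $1$ as $\lambda\to\infty$. The only difference is in how the lower bound is obtained. The paper applies Jensen's inequality to the convex map $y\mapsto e^{-\theta y}$ to get $\phi(\theta)\ge e^{-\theta/\lambda}$, whereas you apply the pointwise inequality $1-e^{-x}\le x$ and take expectations to get $\phi(\theta)\ge 1-\theta/\lambda$. Since $e^{-\theta/\lambda}\ge 1-\theta/\lambda$, the paper's bound is marginally sharper, but both give the limit immediately and both yield the $O(1/\lambda)$ rate; in fact the paper later combines its Jensen estimate with $1-y\le e^{-y}$ to recover exactly your bound $\lambda(1-\phi(\theta))\le\theta$. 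Your route is a touch more elementary in that it avoids invoking Jensen, and it delivers directly the form used downstream.
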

\begin{proof}
	For any $\theta\in \R_+$, we have $e^{-\theta U_1} \le 1$. 
	Further, we observe that $f: \R_+ \to \R_+$ defined by $f(y) \triangleq e^{-\theta y}$ is a convex function. 
	From Jensen's inequality, we have $\E f(U_1) \ge f(\E U_1)$. 
	Combining both these results, we get 
	\EQN{
		\label{eqn:LSTIneq}
		e^{-\frac{\theta}{\lambda}}= e^{-\theta\E U_1} \le \phi(\theta) \le 1,\quad \theta \in \R_+. 
	}
	Taking the limit as arrival rate $\lambda\to\infty$, we get the result.  
\end{proof}
\begin{rem}
	\label{rem:beta}
	Consider the case when $\lim_{\lambda\to\infty}\lambda(1-\phi(\mu)) = \tilde{\mu}$ exists. 
	Then, from the definition of sequence $\beta = (\beta_j = \prod_{m=1}^{j}\frac{1-\phi(m\mu)}{\phi(\mu)}: j \in \N)$ in Eq.~\eqref{eqn:beta}, 
	we get that 
	\begin{xalignat}{2}
		\label{eqn:LimitBeta}
		&\lim_{\lambda\to\infty}\beta_j = 0, &
		&\lim_{\lambda\to\infty}\lambda\beta_j = \tilde{\mu}\SetIn{j = 1}.
	\end{xalignat} 
\end{rem}
\begin{thm}
	\label{thm:AsympRevLST}
	Consider a $K$ server pricing system with job interarrival times being \iid and having a Laplace Stieltjes transform $\phi$ 
	that satisfies
	$\lim_{\lambda\to\infty} \lambda(1-\phi(\mu))=\tilde\mu$. 
	The mean revenue rate for this system under a uniform price vector $\bp=p\bI$ such that $\oG(p) >0$, 
	is bounded as the arrival rate grows. 
	In particular,  
	$\lim_{\lambda \to \infty}R(K,p\bI)=\tilde\mu p K$.
\end{thm}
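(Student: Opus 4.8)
The plan is to start from the closed-form expression for the blocking probability under uniform pricing, namely Eq.~\eqref{eqn:BlockProbUnifPrice}, together with the revenue formula \eqref{eqn:RevenueUniformPriceFiniteServers}, and take the limit $\lambda\to\infty$. Writing $R(K,p\bI)=\lambda p\oG(p)(1-\pi_K(p))$, I would first observe that the factor $\lambda p\oG(p)$ blows up, so the whole point is that $1-\pi_K(p)$ decays like $1/\lambda$, and the product converges. Concretely, $1-\pi_K(p) = 1 - \big(\sum_{j=0}^K\binom{K}{j}\oG(p)^{-j}\beta_j\big)^{-1}$, so I would write this as $\big(\sum_{j=1}^K\binom{K}{j}\oG(p)^{-j}\beta_j\big)\big/\big(\sum_{j=0}^K\binom{K}{j}\oG(p)^{-j}\beta_j\big)$, using $\beta_0=1$ in the $j=0$ term of the denominator.

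Next I would multiply by $\lambda$ and push the limit inside the finite sums. By Remark~\ref{rem:beta} (which applies because the hypothesis $\lim_{\lambda\to\infty}\lambda(1-\phi(\mu))=\tilde\mu$ is exactly its assumption), each $\beta_j\to 0$ for $j\ge 1$, while $\lambda\beta_j\to\tilde\mu\,\SetIn{j=1}$. Hence the denominator $\sum_{j=0}^K\binom{K}{j}\oG(p)^{-j}\beta_j$ tends to $1$ (only the $j=0$ term survives), and $\lambda$ times the numerator, $\sum_{j=1}^K\binom{K}{j}\oG(p)^{-j}\lambda\beta_j$, tends to $\binom{K}{1}\oG(p)^{-1}\tilde\mu = K\,\oG(p)^{-1}\tilde\mu$. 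Therefore $\lambda(1-\pi_K(p))\to K\tilde\mu/\oG(p)$, and multiplying by the constant $p\oG(p)$ gives $\lim_{\lambda\to\infty}R(K,p\bI)=\tilde\mu p K$, as claimed. The hypothesis $\oG(p)>0$ is what makes all the $\oG(p)^{-j}$ factors finite and legitimizes the division.

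The only mild subtlety — and the step I would be most careful about — is justifying that $\beta_j$ (and $\lambda\beta_j$) really have the claimed limits for \emph{every} $j\in[K]$ simultaneously, not just $j=1$: one needs $\phi(m\mu)\to 1$ for each $m\in[K]$, which is the content of the preceding technical lemma ($\lim_{\lambda\to\infty}\phi(\theta)=1$ for fixed $\theta$), applied at $\theta=m\mu$. Given that, $\beta_j=\prod_{m=1}^j\frac{1-\phi(m\mu)}{\phi(m\mu)}$ is a product of $j$ factors each going to $0$ (for $j\ge1$), so $\beta_j\to0$; and $\lambda\beta_j = \big(\lambda(1-\phi(\mu))/\phi(\mu)\big)\prod_{m=2}^j\frac{1-\phi(m\mu)}{\phi(m\mu)}$, where the leading bracket $\to\tilde\mu$ and the remaining product $\to 0$ when $j\ge 2$ and equals the empty product $1$ when $j=1$ — giving $\lambda\beta_j\to\tilde\mu\SetIn{j=1}$. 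Since all sums are finite, interchanging limit and summation is immediate and no uniform integrability or dominated-convergence argument is needed.
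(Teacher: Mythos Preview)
Your proposal is correct and follows essentially the same route as the paper: both start from $R(K,p\bI)=\lambda p\oG(p)(1-\pi_K(p))$, rewrite $\lambda(1-\pi_K(p))$ as the ratio $\big(\sum_{j=1}^K\binom{K}{j}\oG(p)^{-j}\lambda\beta_j\big)\big/\big(\sum_{j=0}^K\binom{K}{j}\oG(p)^{-j}\beta_j\big)$, and then invoke the limits $\beta_j\to 0$ and $\lambda\beta_j\to\tilde\mu\,\SetIn{j=1}$ from Remark~\ref{rem:beta}. The paper's displayed Eq.~\eqref{eqn:AsympRev} is exactly your ratio with the $j=1$ term of the numerator singled out; your added paragraph justifying $\lambda\beta_j\to\tilde\mu\,\SetIn{j=1}$ via the preceding lemma $\phi(m\mu)\to 1$ is a welcome elaboration of what the paper leaves implicit.
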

\begin{proof}
	Recall that the mean revenue rate for uniform pricing $p\bI$ of $K$ \iid exponential servers is given by $R(K, p\bI) = \lambda p\oG(p)(1-\pi_K(p))$ from Eq.~\eqref{eqn:RevenueUniformPriceFiniteServers}. 
	From Proposition~\ref{prop:Blocking}, 
	we have the blocking probability $\pi_K(p)$ in Eq.~\eqref{eqn:BlockProbUnifPrice} defined in terms of variables 
	$\beta_j=\prod_{m=1}^{j}\frac{1-\phi(m\mu)}{\phi(m\mu)}$ given in Eq.~\eqref{eqn:beta} for $j \in [K]$. 
	Therefore, we can write the mean revenue rate as 
	\begin{equation}
	\label{eqn:AsympRev}
	R(K,\bp) 
	= p\oG(p)\left(\dfrac{\frac{K\lambda \beta_1}{\oG(p)}+\lambda\sum_{j=2}^K \binom{K}{j}\oG(p)^{-j}\beta_j}{\sum_{j=0}^K \binom{K}{j}\oG(p)^{-j}\beta_j}\right).
	\end{equation}
	Taking the limit as arrival rate $\lambda\to\infty$, 
	substituting the limiting results from Eq.~\eqref{eqn:LimitBeta} in Eq.~\eqref{eqn:AsympRev}, we obtain the result.
\end{proof}
\begin{rem}
	From the inequality on Laplace Stieltjes transform $\phi$ in Eq.~\eqref{eqn:LSTIneq} and the fact that $1- y \le e^{-y}$, we get
	\begin{equation*}
	0 \le \lambda(1-\phi(x))\le \lambda(1-e^{-\frac{x}{\lambda}})\le x.
	\end{equation*}
	From the definition of $\tilde{\mu} = \lim_{\lambda\to\infty}\lambda(1-\phi(\mu))$, 
	we obtain that $0 \le \tilde{\mu} \le \mu$. 
	Thus, depending on the inter arrival time distribution, 
	the limiting revenue can lie between $0$ and $\mu pK$. 
	The quantity $\tilde\mu$ can be considered an asymptotic service rate per server.
\end{rem}
We present examples of limiting revenue rate being $\mu pK$, zero, and between $(0, \mu pK)$ in Examples~\ref{exmp:Max},~\ref{exmp:Zero}, and~\ref{exmp:Inter} respectively, in Appendix~\ref{subsec:FixUnifPrice}.   
We see that with the fixed uniform pricing, the limiting mean revenue rate remains bounded, 
even when the arrival rate $\lambda$ increases unboundedly large. 
We next show that it is indeed possible to scale the mean revenue rate with the arrival rate, 
at least in the limiting regime, 
if the uniform pricing scales with the job arrival rate $\lambda$. 
\begin{lem}
	Consider a $K$ server uniform pricing system with \iid job interarrival times having Laplace Stieltjes transform $\phi: \R_+\to \R_+$. 
	If the limit $\tilde\mu\triangleq \lim_{\lambda\to\infty}\lambda(1-\phi(\mu)) > 0$, 
	the value distribution $G$ 
	has the support $\R_+$, 
	and the uniform price $p\in\oG^{-1}(\frac{1}{\lambda})$, 
	then the limiting revenue rate  $\lim_{\lambda \to \infty} R(K, p\bI) = \infty$.  
\end{lem}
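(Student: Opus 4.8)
The plan is to substitute the prescribed scaling $\oG(p)=1/\lambda$ directly into the revenue formula and then reduce the claim to two clean sub-facts. By Eq.~\eqref{eqn:RevenueUniformPriceFiniteServers} we have $R(K,p\bI)=\lambda p\,\oG(p)\,(1-\pi_K(p))$, so choosing $p$ with $\oG(p)=1/\lambda$ collapses this to $R(K,p\bI)=p\,(1-\pi_K(p))$. It therefore suffices to show (i) $p\to\infty$ as $\lambda\to\infty$, and (ii) $1-\pi_K(p)$ stays bounded away from $0$ as $\lambda\to\infty$. For (i): since $G$ has support $\R_+$, $\oG(M)=\P[V_1\ge M]>0$ for every $M$; hence if $p$ solves $\oG(p)=1/\lambda$ and $p$ stayed bounded along some sequence $\lambda_k\to\infty$, then $1/\lambda_k=\oG(p)\ge\oG(M)>0$ for a fixed bound $M$, a contradiction. (Existence of a solution for $\lambda>1$ is immediate from continuity of $\oG$ together with $\oG(0)=1$ and $\oG(p)\to0$.) So any $p\in\oG^{-1}(1/\lambda)$ satisfies $p\to\infty$.

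For (ii) I would use Proposition~\ref{prop:Blocking}. With $\oG(p)=1/\lambda$ we have $\oG(p)^{-j}=\lambda^{j}$, so, keeping only the nonnegative $j=0$ and $j=1$ terms in the sum,
\[
\pi_K(p)^{-1}=\sum_{j=0}^{K}\binom{K}{j}\lambda^{j}\beta_j\ \ge\ 1+K\lambda\beta_1,
\]
using $\beta_0=1$. Hence $1-\pi_K(p)\ge \dfrac{K\lambda\beta_1}{1+K\lambda\beta_1}$. Now $\lambda\beta_1=\dfrac{\lambda(1-\phi(\mu))}{\phi(\mu)}$, whose numerator tends to $\tilde{\mu}$ by hypothesis and whose denominator tends to $1$ (the earlier lemma gives $\phi(\mu)\to1$ as $\lambda\to\infty$), so $\lambda\beta_1\to\tilde{\mu}>0$. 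Since $x\mapsto Kx/(1+Kx)$ is increasing on $\R_+$, for all $\lambda$ large enough we obtain $1-\pi_K(p)\ge \dfrac{K\tilde{\mu}/2}{1+K\tilde{\mu}/2}=:c>0$.

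Combining, $R(K,p\bI)=p\,(1-\pi_K(p))\ge c\,p\to\infty$, which is the claim. The one step I would be most careful about is (ii): the scaling $\oG(p)=1/\lambda$ is precisely the one that pins the effective admitted-arrival rate $\lambda\oG(p)$ to a constant while letting the per-job price $p$ blow up, so everything rests on the surviving $j=1$ term $K\lambda\beta_1$ in the Erlang-type denominator $\pi_K(p)^{-1}$ converging to the positive constant $K\tilde{\mu}$. This is the same mechanism that drives Theorem~\ref{thm:AsympRevLST} and Remark~\ref{rem:beta}, and, pleasantly, no information about $\phi$ at the points $2\mu,\dots,K\mu$ is needed beyond the fact that $\phi(m\mu)\to1$, since the higher-$j$ terms are simply discarded in the lower bound.
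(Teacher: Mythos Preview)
Your proof is correct and follows essentially the same route as the paper: substitute $\oG(p)=1/\lambda$ to reduce to $R(K,p\bI)=p(1-\pi_K(p))$, lower-bound $\pi_K(p)^{-1}$ by the $j=0,1$ terms from Proposition~\ref{prop:Blocking}, and use $\lambda\beta_1\to\tilde\mu>0$ together with $p\to\infty$. The only cosmetic difference is that you pass to the limit of $\lambda\beta_1$ directly via $\phi(\mu)\to1$, whereas the paper inserts the intermediate bound $\beta_1\ge 1-\phi(\mu)$; both lead to the same conclusion.
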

\begin{proof}
	Let $p \in \oG^{-1}(1/\lambda)$. 
	Substituting $\oG(p) = \frac{1}{\lambda}$ in the mean revenue rate in Eq.~\eqref{eqn:RevenueUniformPriceFiniteServers} for $K$ server system under uniform pricing,   
	we obtain $R(K,p\bI) = p(1-\pi_K(p))$. 
	From the blocking probability $\pi_K(p)$ expression in Eq.~\eqref{eqn:BlockProbUnifPrice} in terms of the positive sequence $\beta = (\beta_j: j \in [K])$ in Eq.~\eqref{eqn:beta}, 
	we get 
	\begin{equation*}
	\pi_K(p)=\dfrac{1}{\sum_{j=0}^K \binom{K}{j}\oG(p)^{-j}\beta_j}\le \frac{1}{1+K\oG(p)^{-1}\beta_1},
	\end{equation*}
	Recall that $\phi(\mu) \le 1$, and hence $\beta_1 = \frac{(1-\phi(\mu))}{\phi(\mu)} \ge 1-\phi(\mu)$. 
	Using this fact and substituting $\oG(p) = 1/\lambda$ in the above equation, 
	we get 
	$\pi_K(p) \le (1+ K\lambda(1-\phi(\mu))^{-1}$.
	From the hypothesis $\lim_{\lambda\to\infty}\lambda(1-\phi(\mu)) = \tilde\mu> 0$, 
	and the fact that $\lim_{x\to 0}\oG^{-1}(x) = \infty$\footnote{From the definition of distribution functions, the complimentary distribution $\oG$ is non-increasing and $\lim_{x\to\infty}\oG(x) = 0$.  
		Further, 	since the support of $G$ is $\R_+$, it follows that $\lim_{x\to 0}\oG^{-1}(x) = \infty$.    
	}, 
	we obtain 
	\EQ{
		\lim_{\lambda\to\infty}R(K,p\bI) 
		= \lim_{\lambda\to 0}\oG^{-1}\left(\frac{1}{\lambda}\right)\frac{\tilde\mu}{1+\tilde\mu} = \infty.
	}
\end{proof}
Thus, an arrival rate dependent uniform pricing can scale the revenue rate to infinity, in the asymptotic regime as the arrival rate $\lambda$ grows arbitrarily large. 
We show an example of linear increase of mean revenue rate with arrival rate $\lambda$ in Example~\ref{exmp:Linear}.   
Since $\oG^{-1}(\frac{1}{\lambda}) \to \infty$ as $\lambda$ increases, 
we see that to extract maximum revenue, 
the price should be made as high as possible in the heavy traffic limit. 
However, letting the price grow too fast can cause the revenue rate to go to zero instead of infinity, 
as shown in Example~\ref{exmp:Log}. 

\section{Optimal Pricing for Finite Servers with Poisson Arrivals}
\label{MDP-finite-servers}
In the previous section, we found the optimal uniform pricing for a finite server system. 
Uniform pricing is optimal when the number of servers is very large. 
However, this yields a sub optimal revenue rate when the number of servers is finite. From Remark~\ref{rem:UnifBoundPoiss}, it seems that uniform pricing would be sub optimal in a system with few servers or with high load, i.e, arrival rate much higher than service rate.
In order to compute the revenue maximizing price, we frame the optimal state dependent pricing problem as a continuous time Markov decision problem~\cite[Chapter~5]{bertsekas2007}. 
We derive optimal prices and also analyze their dependence on various system parameters, 
e.g., the number of servers, job arrival rate, and service rate. We first formulate the MDP for the case of Poisson arrivals, and solve it. In the subsequent section, we solve the MDP for a system with general arrivals. These are dealt with separately because the formulation changes when we move from Poisson to general arrivals. Furthermore, under the Poisson assumption e are able to obtain more insights into the system behaviour.

\subsection{The MDP formulation}
\label{sec:mdp-formulation}
As in Section~\ref{sec:sys-model}, 
we consider the number of busy servers to be the state of the system and the quoted price in any state to be the control. 
Correspondingly, the state space is $\cX^\prime$ and the control space for price $u \in \R_+^{\cX^\prime}$. 
The mean revenue rate given a stationary state dependent policy $u$ is,
\begin{equation}\label{eqn:MdpAvgCost}
R(K,u)= \lim_{t\to\infty}\frac{\E R(t)}{t}= \lim_{t\to\infty}\frac{1}{t}\E[\int_{0}^{t} g(X_s,u(X_s))ds], 
\end{equation} 
where $g$ is the instantaneous reward. In our setup, the rewards are obtained only at the arrival instants, and equals the price $u$ if accepted by the incoming arrival. 
However, the price $u$ is changed at every transition instant. 
Denoting $X_n$ as the state of the system after $n$ transitions, we can rewrite the reward rate as
\begin{equation*}
R(K,u)= \lim_{t\to\infty}\frac{1}{t}\E \sum_{n=0}^{N_t}g(X_n,u(X_n)).
\end{equation*}

Following the discussion in~\cite{bertsekas2007}, this is equivalent to
\begin{equation*}
R(K,u)=\lim_{N \to \infty}\frac{1}{\E t_N} \E\sum_{n = 1}^{N} g(X_n,u(X_n)),
\end{equation*}
where $t_N$ is the $N$-th transition epoch. 
We wish to find the control $u^*\in \R_+^{\cX^\prime}$ that yields the optimal reward rate $R_K^*=R(K,u^*)=\max_{u}R(K,u)$.

The sojourn times in various states are independent exponentially distributed random variables  depending on the controls applied on transitions to those states. As soon as the state changes to state $i$, a price $u$ is set. 
This price is accepted with probability $\oG(u)$ by an incoming arrival. 
Therefore, the sojourn times in a state $i$, for price $u$, are exponentially distributed with parameters  $\nu_i(u) = i\mu + \lambda \oG(u)\SetIn{i \in \cX'}$. 
The state transition probabilities are independent of the sojourn times and dependent on the price $u \in \R_+$, and are given by: 
$p_{0,1}(u) = 1$ and $p_{K,K-1}(u) = 1$, 
and for $i \in [K-1]$ 
\EQN{
	\label{eqn:TransProb}
	p_{ij}(u) 
	= \frac{\lambda \oG(u)}{\nu_i(u)}\SetIn{j = i+1} 
	+ \frac{i\mu}{\nu_i(u)}\SetIn{j = i-1}. 
}
In addition, the rewards are accrued at the state transition instants and hence we focus only on the embedded discrete time Markov chain. 
The duration between two transitions is referred to as a stage of the MDP.  
When in a state $i$ and using control $u$, 
a single stage reward $u$ is obtained if a job arrives and joins service leading to the state~$i+1$.  
The mean single stage reward is  
\EQN{
	\label{eqn:g-i-u}
	g(i,u)  = u p_{i,i+1}(u)=  u\SetIn{i = 0} + \frac{\lambda u\oG(u)}{\nu_i(u)}\SetIn{i \in [K-1]}.
}
\subsection{Uniformization of continuous time Markov chain}
\label{sec:uniformization}
Using~\cite[Proposition  5.3.1]{bertsekas2007} to solve the average reward MDP in Eq.~\eqref{eqn:MdpAvgCost} we can write  the Bellman's equation 
for all states $i$
\EQN{
	\label{eqn:bellman-continuous}
	h(i) = \max_u\Big[g(i,u) - \frac{\theta}{\nu_i(u)} + \sum_{j=0}^K p_{ij}(u)h(j) \Big]. 
}
Here  $\theta$ is the optimal average reward per stage  independent of the initial state~(see~\cite[Section~4.1]{bertsekas2007}) and $h(i)$, has interpretation of a relative or differential reward for each state $i$. 
Defining the uniformizing transition rate $\Lambda \triangleq K\mu + \lambda$, we observe that $\nu_i < \Lambda$ for all states $i$ and control $u \in \R_+$. 
Hence we can convert the above Markov controlled process to the one with uniform transition rate $\Lambda$ by allowing fictitious self transitions such that the resulting dynamics remains unchanged. Specifically, we redefine state transition probabilities for the uniformized Markov process as follows.   
For all states $i \in \cX$ and control $u \in R_+$, 
\EQN{
	\label{eqn:TransProbUniform}
	\tp_{ij}(u) = 
	p_{ij}(u)\frac{\nu_i(u)}{\Lambda}\SetIn{j \neq i}
	+ \Big(1- \frac{\nu_i(u)}{\Lambda}\Big)\SetIn{j = i}. 
}
We can now view the above problem as a discrete-time average reward problem with same state and control spaces, transition probabilities $\tp_{ij}(u)$ and expected single stage rewards $g(i,u)$. The Bellman's equation for this discrete-time problem has the following form for all $i$
\EQN{
	\label{eqn:bellman-discrete}
	\th(i) = \max_u\Big[g(i,u)\nu_i(u) - \theta + \sum_{j=0}^K \tp_{ij}(u)\th(j)\Big].
}
\begin{rem}
	The Bellman's equations~\eqref{eqn:bellman-continuous} and~\eqref{eqn:bellman-discrete} are equivalent. 
	In particular, a pair $(\theta, h)$ satisfies~\eqref{eqn:bellman-continuous} 
	if and only if the pair $(\theta,\th)$ satisfies~\eqref{eqn:bellman-discrete}, 
	where $\th(i) = \Lambda h(i)$ for all $i$. 
	Moreover, for all the states, the optimal actions for the two problems~(control $u$ achieving maxima in the right hand sides of~\eqref{eqn:bellman-continuous} and~\eqref{eqn:bellman-discrete}) are identical.   
\end{rem}
\begin{rem}
	Defining the uniformized reward difference $\Delta(i) \triangleq \frac{(\th(i) - \th(i+1))}{\Lambda}$ for all states $i\in \cX'$,
	and substituting in Eq.~\eqref{eqn:bellman-discrete}, 
	along with expressions for per stage mean reward $g(i,u)$ from Eq.~\eqref{eqn:g-i-u}, 
	and transition probabilities $\tp_{ij}(u)$ from Eq.~\eqref{eqn:TransProbUniform}, 
	we get the following set of equations for all $i \in \cX$
	\EQN{
		\label{eqn:Delta}
		\theta = \lambda\max_u\set{\oG(u)(u - \Delta(i))}\SetIn{i \in \cX'} + i\mu\Delta(i-1). 
	}
\end{rem}

\subsection{Auxiliary maps}
\label{sec:aux}
We define the mapping $f: \R^2 \to \R$ as 
\EQN{
	\label{eqn:def-f}
	f(B,u) \triangleq (u-B)\oG(u),~B, u \in \R. 
}
\begin{rem}
	We define a set valued map $u^\ast$ that maps $B\in \R$ to $u^\ast(B)\subseteq \R_+$ 
	\begin{equation}
	\label{eqn:DefnArgMax}
	u^\ast(B) \triangleq \arg\max_uf(B,u). 
	\end{equation}
	If the maximizer is unique, then $u^\ast: \R \to \R_+$ is a real valued map. 
	The maximum value of $f(B,u^\ast)$ is a real valued map $m: \R \to \R$ such that 
	\begin{equation}
	\label{eqn:def-m}
	m(B) \triangleq f(B, u^\ast)=\max_u f(B,u).
	\end{equation}
\end{rem}
\begin{lem} 
	\label{lem-m-monotonicity} 
	Following statements are true for $m$ and $u^\ast$. 
	\begin{enumerate}[(a)]
		\item $m$ is non-negative and decreasing in $B$.
		\item $m$ is Lipschitz-1 continuous and convex function of $B$. 
		\item For $B_1 < B_2$, we have $\sup u^\ast(B_1) \le \inf u^\ast(B_2)$. 
		When $f(B,u)$ has a unique maximizer in $u$, 
		then this maximizer $u^\ast$ is non-decreasing in $B$. 
	\end{enumerate}
\end{lem}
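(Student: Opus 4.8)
The plan is to read off all three properties from one structural observation: for each fixed control $u$ the map $B\mapsto f(B,u)=u\oG(u)-B\oG(u)$ is \emph{affine} in $B$ with slope $-\oG(u)\in[-1,0]$. Since $m$ is the pointwise supremum of this family of affine functions it is automatically convex, the uniform slope bound will give the Lipschitz estimate, and $\oG\ge 0$ will give monotonicity; the ordering statement for the maximizer will come from a routine interchange argument. Throughout I use that $\oG$ is non-increasing, continuous (it is the complementary CDF of a continuous law), satisfies $0\le\oG\le 1$ with $\oG(u)=1$ for $u\le 0$, and that $m(B)<\infty$ because $f(B,u)\le u\oG(u)+|B|\le \E V_1+|B|$ by Markov's inequality.

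\emph{Part (a).} Non-negativity follows by evaluating $f$ at the feasible point $u_0:=\max(B,0)$: $f(B,u_0)=(u_0-B)\oG(u_0)\ge 0$, hence $m(B)\ge0$. For monotonicity, fix $B_1\le B_2$; for every $u$ one has $f(B_1,u)-f(B_2,u)=(B_2-B_1)\oG(u)\ge 0$, and taking the supremum over $u$ gives $m(B_1)\ge m(B_2)$. (This sharpens to a strict decrease on $\{B:m(B)>0\}$: if $u_2\in u^\ast(B_2)$ then $m(B_2)=f(B_2,u_2)=f(B_1,u_2)-(B_2-B_1)\oG(u_2)\le m(B_1)-(B_2-B_1)\oG(u_2)$, and the last term forces strictness, either because $\oG(u_2)>0$, or because $\oG(u_2)=0$ and then $m(B_2)=0<m(B_1)$.)

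\emph{Part (b).} Convexity is immediate, since $m=\sup_u f(\cdot,u)$ is a supremum of affine functions and, as noted, is finite-valued. For the Lipschitz bound take any $B_1,B_2$ and $u_1\in u^\ast(B_1)$; then
\EQ{
m(B_1)=f(B_1,u_1)=f(B_2,u_1)+(B_2-B_1)\oG(u_1)\le m(B_2)+|B_1-B_2|,
}
using $\oG(u_1)\le 1$, and exchanging the roles of $B_1,B_2$ yields $|m(B_1)-m(B_2)|\le|B_1-B_2|$. (Equivalently, $m$ is convex with a.e.\ derivative $m'(B)=-\oG(u^\ast(B))\in[-1,0]$ by Danskin's theorem.)

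\emph{Part (c).} Let $B_1<B_2$, $u_1\in u^\ast(B_1)$, $u_2\in u^\ast(B_2)$. Adding the optimality inequalities $f(B_1,u_1)\ge f(B_1,u_2)$ and $f(B_2,u_2)\ge f(B_2,u_1)$ and using $f(B,u)-f(B',u)=(B'-B)\oG(u)$, everything cancels except $(B_2-B_1)\bigl(\oG(u_1)-\oG(u_2)\bigr)\ge 0$, so $\oG(u_1)\ge\oG(u_2)$. If we had $u_1>u_2$, monotonicity of $\oG$ would force $\oG(u_1)=\oG(u_2)=:c$; substituting back, the two optimality inequalities reduce to $c(u_1-u_2)\ge 0$ and $c(u_1-u_2)\le 0$, whence $u_1=u_2$ as soon as $c>0$ — a contradiction. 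Hence $u_1\le u_2$, i.e.\ $\sup u^\ast(B_1)\le\inf u^\ast(B_2)$, and when the maximizer is unique for every $B$ this says precisely that $u^\ast$ is non-decreasing. The one delicate point, and the main obstacle, is the degenerate case $c=0$, which occurs only when $m\equiv 0$, i.e.\ for $B$ at or beyond $\sup\supp(G)$; there the argmax sets are unbounded and the literal inequality must be read with the control restricted to the effective range $\{u:\oG(u)>0\}$ (or any compact set containing it). This restriction is harmless in the MDP, since the relevant arguments $\Delta(i)$ appearing in \eqref{eqn:Delta} lie in that range.
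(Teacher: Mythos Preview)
Your proof is correct and follows essentially the same route as the paper: both exploit that $B\mapsto f(B,u)$ is affine with slope $-\oG(u)\in[-1,0]$, read off non-negativity by evaluating $f$ at a convenient point, get monotonicity and the Lipschitz bound from the two one-sided inequalities $m(B_1)-m(B_2)\ge (B_2-B_1)\oG(u_2)$ and $m(B_1)-m(B_2)\le (B_2-B_1)\oG(u_1)$, and obtain convexity from ``supremum of affine is convex.'' For~(c) both arrive at $(B_2-B_1)(\oG(u_1)-\oG(u_2))\ge 0$ by the same interchange argument.

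The one place you are actually \emph{more} careful than the paper is the last step of~(c): the paper jumps from $\oG(u_2)\le\oG(u_1)$ directly to $u_2\ge u_1$, which is not literally valid when $\oG$ is merely non-increasing. Your contrapositive (assume $u_1>u_2$, force $\oG(u_1)=\oG(u_2)=c$, then extract $c(u_1-u_2)=0$) closes this gap whenever $c>0$, and your remark that the residual case $c=0$ only arises for $B$ at or beyond $\sup\supp(G)$—hence never for the $\Delta(i)$ actually occurring in~\eqref{eqn:Delta}—is the right disposal. (Minor quibble: ``$m\equiv 0$'' should read ``$m(B_1)=m(B_2)=0$''; $m$ is not identically zero.)
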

\begin{proof}
	Proof is in Appendix~\ref{sec:ProofMMonotone}.
\end{proof}
\remove{
	We assume that in the above problems maxima are achieved and $m(B)$ are finite for all $B$. In particular, we assume that $\lim_{u \to \infty} u \bg(u) < \infty$.\footnote{Notice that, for any $B$, $m(B) \geq \max_u u\bg(u) _ B \geq \lim_{u \to \infty} u \bg(u) - B$. Hence $\lim_{u \to \infty} u \bg(u) = \infty$ implies $m(B) = \infty$. Moreover, $\lim_{u \to \infty} u \bg(u) = \infty$ also implies that the expected valuation is infinite.}
}	
\subsection{The optimal pricing}
\label{sec:opt-pricing}

In terms of the map $m$, we can re-write the Eq.~\eqref{eqn:Delta} as 
\EQN{
	\label{eqn:iterative}
	m(\Delta(i))\SetIn{i \in \cX'}+ \frac{i\mu}{\lambda}\Delta(i-1)= \frac{\theta}{\lambda},~ i \in \cX.
}
Observe that if $(\Delta^\ast(i):i\in\cX')$ solves Eq.~\eqref{eqn:iterative} 
then the control $u^\ast_i \triangleq u^\ast(\Delta^\ast(i))$ achieving $m(\Delta^\ast(i))$ in Eq. ~\eqref{eqn:def-m} 
is the optimal control in each state $i \in \cX'$.
\begin{rem}
	Consider the limiting case of  infinitely many servers, i.e., $K = \infty$. 
	We can easily see that $\theta = \lambda m(0)$ along with $\Delta(i) = 0$ for all $i \in \Z_+$ is a solution to  Eq.~\eqref{eqn:iterative}. 
	In particular, uniform~(state independent) pricing, $u^\ast = \arg\max_{u \geq 0} u\bg(u)$, achieves the optimal revenue rate as readily seen in Eq.~\eqref{eqn:RevenueUniformPriceInfiniteServers}.
\end{rem}

\begin{lem}
	\label{lem-delta-monotonicity}
	Let $(\theta, \Delta(i), i \in \cX')$ be a solution to Eq.~\eqref{eqn:iterative} 
	and $\bp^\ast_K = (u_0^\ast, \dots, u^\ast_{K-1}) \in \R_+^{\cX'}$ be the optimal price vector. 
	Then 
	\begin{enumerate}[(a)]
		\item $\theta \ge 0$,
		\item $\Delta(i)$ are positive and increasing in $i \in \cX'$.
		\item $u^\ast_i$ are also increasing in $i \in \cX'$.
	\end{enumerate}
\end{lem}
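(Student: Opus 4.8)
The plan is to work directly with the fixed-point system~\eqref{eqn:iterative} and to exploit its two boundary instances. Setting $i=0$ --- where the indicator equals $1$ and the term $\frac{i\mu}{\lambda}\Delta(i-1)$ drops --- gives $m(\Delta(0))=\frac{\theta}{\lambda}$, while setting $i=K$ --- where the indicator vanishes --- gives $\Delta(K-1)=\frac{\theta}{K\mu}$. Part (a) is then immediate from Lemma~\ref{lem-m-monotonicity}(a): since $m\ge 0$ and $\lambda>0$, $\theta=\lambda\,m(\Delta(0))\ge 0$. I would next strengthen this to $\theta>0$, which is what parts (b) and (c) actually rely on. If $\theta=0$, the boundary identity forces $\Delta(K-1)=0$, and for $1\le i\le K-1$ the bulk equations read $m(\Delta(i))=-\frac{i\mu}{\lambda}\Delta(i-1)$, so $m\ge 0$ forces $\Delta(i-1)\le 0$; hence $\Delta(i)\le 0$ for every $i\in\cX'$. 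But then $m(\Delta(0))\ge m(0)>0$ --- here using that $G$ is a non-degenerate distribution on $\R_+$, so that $m(0)=\max_u u\oG(u)>0$ --- contradicting $m(\Delta(0))=\theta/\lambda=0$.

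For part (b) I would first prove positivity by ruling out $\Delta(0)\le 0$. Assuming $\Delta(0)\le 0$, I claim $\Delta(i)\le\Delta(0)$ for every $i\in\cX'$, by induction on $i$: from $\Delta(i-1)\le\Delta(0)\le 0$ the recursion gives $m(\Delta(i))=\frac{\theta}{\lambda}-\frac{i\mu}{\lambda}\Delta(i-1)\ge\frac{\theta}{\lambda}=m(\Delta(0))$, and since $\Delta(0)$ and $\Delta(i)$ both lie in the half-line on which $m$ is positive --- where $m$ is strictly decreasing, a property easily verified from the definition~\eqref{eqn:def-m} of $m$ --- the inequality $\Delta(i)>\Delta(0)$ would give $m(\Delta(i))<m(\Delta(0))$, a contradiction. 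This forces $\Delta(K-1)\le 0$, contradicting $\Delta(K-1)=\theta/(K\mu)>0$; hence $\Delta(0)>0$. Monotonicity then follows by a second induction. For $i=1$, $m(\Delta(1))=\frac{\theta}{\lambda}-\frac{\mu}{\lambda}\Delta(0)<\frac{\theta}{\lambda}=m(\Delta(0))$, so $\Delta(1)>\Delta(0)>0$ because $m$ is non-increasing. For $i\ge 2$, subtracting the equations at indices $i$ and $i-1$ gives $m(\Delta(i))-m(\Delta(i-1))=\frac{\mu}{\lambda}\big((i-1)\Delta(i-2)-i\Delta(i-1)\big)$, and the inductive hypothesis $0<\Delta(i-2)<\Delta(i-1)$ makes the right-hand side strictly negative; hence $m(\Delta(i))<m(\Delta(i-1))$ and therefore $\Delta(i)>\Delta(i-1)>0$. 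This establishes that $\Delta$ is positive and strictly increasing on $\cX'$.

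Part (c) is then a corollary of part (b) and Lemma~\ref{lem-m-monotonicity}(c): the optimal control in state $i$ is $u^\ast_i=u^\ast(\Delta^\ast(i))$, and since $\Delta^\ast$ is increasing in $i$, monotonicity of the maximizer map $u^\ast$ in its argument gives $u^\ast_i\le u^\ast_{i+1}$ (as real numbers when the maximizer in~\eqref{eqn:DefnArgMax} is unique, and via $\sup u^\ast(\Delta^\ast(i))\le\inf u^\ast(\Delta^\ast(i+1))$ in general).

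I expect the main obstacle to be the joint use of the two boundary equations --- in particular obtaining $\theta>0$ rather than merely $\theta\ge 0$, and excluding the degenerate solution with all $\Delta(i)\le 0$. The delicate point is that $m$ is only weakly decreasing (it is flat, equal to $0$, once $B$ exceeds the support of $G$), so a comparison $m(a)\ge m(b)$ does not by itself yield $a\le b$. The remedy used above is to keep every comparison in the strict form $m(\Delta(i))<m(\Delta(i-1))$ with $m$-values that stay strictly positive --- which is precisely where $\theta>0$ and $m(0)>0$ enter --- so that non-increasingness of $m$ alone transfers strictness to the $\Delta(i)$'s. A minor caveat: "increasing" in (c) should be read as non-decreasing unless the price-maximizer is unique, which is all that Lemma~\ref{lem-m-monotonicity}(c) provides.
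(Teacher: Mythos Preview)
Your proof is correct and follows essentially the same route as the paper's: both extract the boundary identities at $i=0$ and $i=K$, establish $\Delta(0)>0$ by contradiction (showing $\Delta(i)\le\Delta(0)$ for all $i$ under the hypothesis $\Delta(0)\le0$, then colliding with $\Delta(K-1)=\theta/(K\mu)$), prove monotonicity by the same inductive subtraction of consecutive equations, and deduce (c) from (b) via Lemma~\ref{lem-m-monotonicity}(c). Your version is in fact a bit more careful than the paper's: you explicitly upgrade $\theta\ge0$ to $\theta>0$ using $m(0)>0$, which is exactly what the contradiction in the positivity step needs (the paper leaves this implicit), and you correctly flag that strict comparisons $m(\Delta(i))<m(\Delta(i-1))$ are what let non-increasingness of $m$ transfer to strict ordering of the $\Delta(i)$'s.
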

\begin{proof} 
	Proof is in Appendix~\ref{sec:ProofDeltaMonotone}.
\end{proof}

Next, we will focus on solving Eq.~\eqref{eqn:iterative}. 
We propose an iterative algorithm to obtain $\theta$, 
which can then be used to obtain $\Delta(i)$ and also the optimal price $u^\ast_i$ for all the states.  
Realizing that $\Delta(i)$ is a function of optimal revenue $\theta$ and state $i$, 
we denote it as $g_i(\theta) \triangleq \Delta(i)$, to rewrite Eq.~\eqref{eqn:iterative} as 
\begin{subequations}
	\begin{align}
	\label{eqn:iterative2a}
	\theta &= \lambda m(g_0(\theta)),\\
	\label{eqn:iterative2b}
	g_{i-1}(\theta) &= \frac{\theta - \lambda m(g_i(\theta))}{i\mu},~ i \in [K-1], \\
	\label{eqn:iterative2c}
	g_{K-1}(\theta) &=\frac{\theta}{K\mu}. 
	\end{align}
\end{subequations}
We will show that there exists a unique $\theta$ which solves  Eq.~\eqref{eqn:iterative2a}.
We then propose   Algorithm~\ref{algo:iterative-algo} that finds this unique $\theta$ in terms of which the optimal prices can be found. In particular, this algorithm iteratively generates two sequences $(\ubar{\theta}_k: k \in \cX)$ and $(\bar{\theta}_k: k \in \cX)$ which converge to the unique $\theta$.       
\begin{algorithm}[hhhh]
	\caption{}\label{algo:iterative-algo}
	\begin{algorithmic}[0]
		\State {\bf initialize} $k = 0,\ubar{\theta}_0 = 0, \bar{\theta}_0 = \lambda m(g_0(0))$,
		\While{$\bar{\theta}_k - \ubar{\theta}_k > \delta$}\ \ \  \Comment{$\delta$ is the desired precision.}
		\State $\tilde{\theta}_k = \frac{\ubar{\theta}_k + \bar{\theta}_k}{2}$,
		\State $\ubar{\theta}_{k+1} = \max\left\{\ubar{\theta}_k,\min\{\tilde{\theta}_k,\lambda m(g_0(\tilde{\theta}_k))\}\right\}$,
		\State $\bar{\theta}_{k+1} = \min\left\{\bar{\theta}_k,\max\{\tilde{\theta}_k,\lambda m(g_0(\tilde{\theta}_k))\}\right\}$,
		\State $k = k+1$
		\EndWhile
	\end{algorithmic}
\end{algorithm}

\begin{thm}
	\label{thm:fixed-point-eqn}
	\begin{enumerate}[(a)]
		\item The fixed point equation $\theta = \lambda m(g_0(\theta))$ has unique solution.
		\item In Algorithm~\ref{algo:iterative-algo}, $\ubar{\theta}_k \uparrow \theta^\ast$ and $\ubar{\theta}_k \downarrow \theta^\ast$, where $\theta^\ast$ is the unique fixed point.
	\end{enumerate}
\end{thm}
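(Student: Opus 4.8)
The plan is to analyze the map $\theta \mapsto \lambda m(g_0(\theta))$ and show it is a contraction-like map with a unique fixed point, then verify that Algorithm~\ref{algo:iterative-algo} is a correct bisection scheme for locating it. The key structural fact I would establish first is that $g_0(\theta)$, obtained by unrolling the recursion \eqref{eqn:iterative2c}--\eqref{eqn:iterative2b} starting from $g_{K-1}(\theta) = \theta/(K\mu)$, is a continuous, strictly increasing function of $\theta$ on $[0,\infty)$ with $g_0(0) \le 0$ (in fact one sees $g_{K-1}(0)=0$, and each backward step $g_{i-1}(0) = -\lambda m(g_i(0))/(i\mu) \le 0$ since $m \ge 0$ by Lemma~\ref{lem-m-monotonicity}(a)). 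Monotonicity propagates backward through the recursion: $g_i(\theta)$ increasing in $\theta$ and $m$ decreasing (Lemma~\ref{lem-m-monotonicity}(a)) make the numerator $\theta - \lambda m(g_i(\theta))$ in \eqref{eqn:iterative2b} increasing, hence $g_{i-1}(\theta)$ increasing; continuity is preserved at each step because $m$ is continuous (Lemma~\ref{lem-m-monotonicity}(b)).

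Given this, define $\Phi(\theta) \triangleq \lambda m(g_0(\theta))$. Since $g_0$ is increasing and $m$ is decreasing, $\Phi$ is \emph{nonincreasing} in $\theta$; moreover $\Phi(0) = \lambda m(g_0(0)) \ge 0$ because $m \ge 0$. The function $\theta \mapsto \theta - \Phi(\theta)$ is therefore strictly increasing (strictly increasing minus nonincreasing), equals $-\Phi(0) \le 0$ at $\theta = 0$, and tends to $+\infty$ as $\theta \to \infty$ (since $\Phi$ is bounded above — $m$ is Lipschitz-1 and decreasing, so $\Phi(\theta) \le \Phi(0)$ for $\theta \ge 0$). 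By the intermediate value theorem and strict monotonicity there is exactly one $\theta^\ast \ge 0$ with $\theta^\ast = \Phi(\theta^\ast)$, proving part (a).

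For part (b), I would argue by induction that the invariant $\ubar\theta_k \le \theta^\ast \le \bar\theta_k$ is maintained and that $\bar\theta_k - \ubar\theta_k$ halves each iteration. The crux is the sign test at the midpoint $\tilde\theta_k$: because $\theta \mapsto \theta - \Phi(\theta)$ is strictly increasing with unique zero $\theta^\ast$, we have $\tilde\theta_k \le \theta^\ast \iff \tilde\theta_k \le \Phi(\tilde\theta_k) = \lambda m(g_0(\tilde\theta_k))$. The update $\ubar\theta_{k+1} = \max\{\ubar\theta_k, \min\{\tilde\theta_k, \Phi(\tilde\theta_k)\}\}$ raises the lower bound to $\tilde\theta_k$ exactly when $\tilde\theta_k \le \Phi(\tilde\theta_k)$, i.e. when $\tilde\theta_k \le \theta^\ast$, and leaves it at $\ubar\theta_k$ otherwise; symmetrically for $\bar\theta_{k+1}$. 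In the case $\tilde\theta_k \le \theta^\ast$ one checks $\min\{\tilde\theta_k, \Phi(\tilde\theta_k)\} = \tilde\theta_k$ so $\ubar\theta_{k+1} = \tilde\theta_k$ and $\bar\theta_{k+1} = \bar\theta_k$; in the case $\tilde\theta_k > \theta^\ast$, $\max\{\tilde\theta_k, \Phi(\tilde\theta_k)\} = \tilde\theta_k$ so $\bar\theta_{k+1} = \tilde\theta_k$ and $\ubar\theta_{k+1} = \ubar\theta_k$. Either way the bracket $[\ubar\theta_k, \bar\theta_k]$ still contains $\theta^\ast$ and has exactly half the width, so $\ubar\theta_k \uparrow \theta^\ast$ and $\bar\theta_k \downarrow \theta^\ast$ geometrically.

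The main obstacle I anticipate is being careful with the backward recursion: one must confirm that $g_0$ is genuinely well-defined and real-valued for every $\theta \ge 0$ (no blow-up, using that $m$ maps $\R$ into $\R$ by Lemma~\ref{lem-m-monotonicity}), and that the strict monotonicity of $g_0$ — not merely weak monotonicity — holds, since the argument for uniqueness in part (a) and for the bisection sign test in part (b) both rely on $\theta - \Phi(\theta)$ being \emph{strictly} increasing. Strictness comes for free from the explicit $\theta/(K\mu)$ term at the bottom of the recursion propagating upward, but this should be stated rather than left implicit.
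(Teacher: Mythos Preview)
Your proof of part (a) follows the same line as the paper's: establish by backward induction through \eqref{eqn:iterative2b}--\eqref{eqn:iterative2c} that $g_0$ is increasing in $\theta$, conclude that $\Phi(\theta)=\lambda m(g_0(\theta))$ is nonincreasing, and combine with $\Phi(0)>0$ to obtain a unique fixed point; you are simply more explicit than the paper about continuity and strict monotonicity.

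For part (b) the paper only cites an external reference, whereas you spell out the bisection argument directly. Your reasoning is sound, with one small imprecision: in the case $\tilde\theta_k \le \theta^\ast$ you assert $\bar\theta_{k+1}=\bar\theta_k$, but in fact $\bar\theta_{k+1}=\min\{\bar\theta_k,\Phi(\tilde\theta_k)\}$ may equal $\Phi(\tilde\theta_k)<\bar\theta_k$ (since $\Phi(\tilde\theta_k)\ge\Phi(\theta^\ast)=\theta^\ast$ is all that is guaranteed), and symmetrically for $\ubar\theta_{k+1}$ in the other case. This only means the bracket shrinks by \emph{at least} half rather than exactly half, so the invariant $\ubar\theta_k\le\theta^\ast\le\bar\theta_k$ and the monotone convergence $\ubar\theta_k\uparrow\theta^\ast$, $\bar\theta_k\downarrow\theta^\ast$ still hold.
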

\begin{proof} 
	We consider the Eqs.~\eqref{eqn:iterative2a},~\eqref{eqn:iterative2b},~\eqref{eqn:iterative2c}.
	\begin{enumerate}[(a)]
		\item Observe that $\lambda m(g_0(0)) > 0$. 
		We now argue that 
		$\lambda m(g_0(\theta))$ is decreasing in $\theta$. These two facts together yield both existence and uniqueness. 
		From the monotonicity of function $m$ in Lemma~\ref{lem-m-monotonicity}(a) and definition of $g_{i-1}$ from Eq.~\eqref{eqn:iterative2b}, 
		it follows that $g_{i-1}$ is increasing in $\theta$ if $g_i$ is increasing in $\theta$. 
		Since $g_{K-1}(\theta) = \theta/K\mu$ is increasing in $\theta$, 
		it follows that $g_0(\theta)$ is increasing in $\theta$, 
		and hence $\lambda m(g_0(\theta))$ is decreasing in $\theta$. 
		
		\item See~\cite[Theorem~2.1]{divya-singh17offloading}.
	\end{enumerate}
\end{proof} 
\begin{rem}
	If we assume the valuation is exponentially distributed, i.e., $\oG(x)=e^{-\beta x}$, we see that the mapping $u^*(B)=B+\frac{1}{\beta}$ and $m(B)=\frac{1}{\beta}e^{-(\beta B+1)}$. The optimal prices will be $u_i^*=u^*(\Delta^*(i))=\Delta^*(i)+\frac{1}{\beta}.$
\end{rem}

\subsection{Properties of the Optimal Solution}
\label{sec:prop-opt-solution}

We now analyze how the optimal prices and the optimal  revenue rate  vary with arrival rate $\lambda$, service rate $\mu$, and number of servers $K$. 
We use the fact that the optimal revenue rate $\theta^\ast$ is solution to the Eq.~\eqref{eqn:iterative2a}, 
from which we inductively derive properties of the uniformized reward difference $g_i$ using the monotonic decrease of $m$ from Lemma~\ref{lem-m-monotonicity}. First, we look at the variation of the optimal revenue rate with arrival rate $\lambda$.
\subsubsection{Varying Arrival Rate}
We assume that we vary the arrival rate $\lambda$ while keeping the service rate $\mu$ and number of servers $K$ fixed.
\begin{prop}
	\label{prop:VarRevenueArrival}
	For a $K$ server pricing system with a fixed service rate $\mu$, the following statements are true.
	\begin{enumerate}[(a)]
		\item The optimal revenue rate $\theta^\ast(\lambda)$ increases with $\lambda$.
		\item The ratio $\theta^\ast(\lambda)/\lambda$ decreases with $\lambda$. 
	\end{enumerate}
\end{prop}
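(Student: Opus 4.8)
The plan is to represent the optimal revenue rate as the solution of a scalar fixed‑point equation and then read off each monotonicity from two ingredients: that the relevant fixed‑point map is \emph{decreasing} in its unknown (this is essentially what the proof of Theorem~\ref{thm:fixed-point-eqn} already shows), and that the map is monotone in the parameter being varied. Whenever a map $T_s$ is decreasing, has the unique fixed point $x^\ast(s)$, and $s\mapsto T_s(x)$ is monotone for each fixed $x$, the fixed point $x^\ast(s)$ inherits that monotonicity: compare $T_{s_1}$ and $T_{s_2}$ at the point $x^\ast(s_1)$ and use that $x\mapsto T_{s_2}(x)-x$ is strictly decreasing with a zero at $x^\ast(s_2)$. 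This elementary crossing comparison is what I will invoke in both parts.

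For part (a) I work with Eqs.~\eqref{eqn:iterative2a}--\eqref{eqn:iterative2c}, writing $g_i^\lambda(\theta)$ to stress the dependence on $\lambda$. The key claim is that for each fixed $\theta$ the quantity $g_i^\lambda(\theta)$ is non‑increasing in $\lambda$, proved by reverse induction on $i$: $g_{K-1}^\lambda(\theta)=\theta/(K\mu)$ is independent of $\lambda$, and if $g_i^\lambda(\theta)$ is non‑increasing in $\lambda$ then, since $m\ge 0$ and $m$ is decreasing (Lemma~\ref{lem-m-monotonicity}(a)), $\lambda\,m(g_i^\lambda(\theta))$ is non‑decreasing in $\lambda$, whence $g_{i-1}^\lambda(\theta)=(\theta-\lambda m(g_i^\lambda(\theta)))/(i\mu)$ is non‑increasing. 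Therefore $m(g_0^\lambda(\theta))$ is non‑decreasing, so $\Phi_\lambda(\theta)\triangleq\lambda m(g_0^\lambda(\theta))$ is increasing in $\lambda$; since $\Phi_\lambda$ is also decreasing in $\theta$ with the unique fixed point $\theta^\ast(\lambda)$ (Theorem~\ref{thm:fixed-point-eqn}), the crossing comparison gives $\theta^\ast(\lambda_1)\le\theta^\ast(\lambda_2)$ for $\lambda_1<\lambda_2$. (A shorter, policy‑level proof of (a) also works: in the $\lambda_2$‑system, independently ``considering'' each arrival with probability $\lambda_1/\lambda_2$ and quoting $\infty$ otherwise, and applying the $\lambda_1$‑optimal prices to the considered arrivals, reproduces the $\lambda_1$‑system verbatim, so $\theta^\ast(\lambda_2)\ge\theta^\ast(\lambda_1)$; this does not seem to yield part (b).)

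For part (b), put $\rho=\lambda/\mu$ and $\eta=\theta/\lambda$; dividing Eq.~\eqref{eqn:iterative} by $\lambda$ gives, for $i\in\cX$,
\EQ{
m(\Delta(i))\SetIn{i\in\cX'}+\tfrac{i}{\rho}\Delta(i-1)=\eta,
}
i.e.\ $m(\Delta(0))=\eta$, $\Delta(K-1)=\rho\eta/K$, and $\Delta(i-1)=\tfrac{\rho}{i}(\eta-m(\Delta(i)))$ for $1\le i\le K-1$. Thus $\eta^\ast(\rho)=\theta^\ast(\lambda)/\lambda$ is the unique fixed point of $\eta\mapsto m(\Psi(\eta,\rho))$, where $\Psi(\eta,\rho)$ is the $\Delta(0)$ generated by this recursion; as in part (a), $\Psi(\cdot,\rho)$ is increasing in $\eta$, so the map is decreasing and the fixed point is unique. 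It remains to show $\Psi(\eta,\rho)$ is non‑decreasing in $\rho$, but a plain induction fails because the recursion multiplies $\eta-m(\Delta(i))$, whose sign is uncontrolled for generic $\eta$, by $\rho$. I will get around this by comparing $T_{\rho_1}$ and $T_{\rho_2}$ only at $\eta=\eta^\ast(\rho_1)$: at that value the recursion run with $\rho=\rho_1$ reproduces the optimal differences $\Delta^\ast(i;\rho_1)>0$ (Lemma~\ref{lem-delta-monotonicity}(b)), and the identity $\eta^\ast(\rho_1)-m(\Delta^\ast(i;\rho_1))=\tfrac{i}{\rho_1}\Delta^\ast(i-1;\rho_1)>0$ holds. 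A reverse induction on $i$ then shows that for every $\rho\ge\rho_1$ the $i$‑th term $D_i(\rho)$ of the recursion run at $\eta=\eta^\ast(\rho_1)$ satisfies $D_i(\rho)\ge\Delta^\ast(i;\rho_1)>0$ and is non‑decreasing in $\rho$ — positivity is exactly what keeps $\eta^\ast(\rho_1)-m(D_i(\rho))$ positive and non‑decreasing, so that its product with $\rho/i$ stays non‑decreasing. Hence $m(\Psi(\eta^\ast(\rho_1),\rho_2))\le m(\Psi(\eta^\ast(\rho_1),\rho_1))=\eta^\ast(\rho_1)$, and the crossing comparison yields $\eta^\ast(\rho_2)\le\eta^\ast(\rho_1)$, i.e.\ $\theta^\ast(\lambda)/\lambda$ decreases in $\lambda$.

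The step I expect to be the main obstacle is the sign control in part (b): unlike in (a), the rescaled recursion is not monotone in $\rho$ uniformly over $\eta$, and the comparison must be anchored at the fixed point $\eta^\ast(\rho_1)$ with positivity (from Lemma~\ref{lem-delta-monotonicity}) propagated through the induction. The two reverse inductions, the rescaling, and the crossing comparison are otherwise routine given Lemmas~\ref{lem-m-monotonicity} and~\ref{lem-delta-monotonicity} and Theorem~\ref{thm:fixed-point-eqn}.
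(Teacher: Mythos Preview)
Your proof of part (a) is essentially the paper's: both fix $\theta$, run the reverse induction $g_{K-1}^\lambda(\theta)=\theta/(K\mu)$ independent of $\lambda$, $g_i^\lambda$ non-increasing $\Rightarrow$ $\lambda m(g_i^\lambda)$ non-decreasing $\Rightarrow$ $g_{i-1}^\lambda$ non-increasing, and then read off monotonicity of the fixed point of $\theta=\lambda m(g_0^\lambda(\theta))$. Your thinning coupling is an extra one-line argument the paper does not give.

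For part (b) your route is correct but different from the paper's. The paper argues by contradiction and never rescales: assuming $\theta^\ast(\lambda)/\lambda$ increases, it uses part (a) to start at $g_{K-1}(\theta^\ast(\lambda))=\theta^\ast(\lambda)/(K\mu)$ increasing, then inducts downward via
\[
\frac{g_{i-1}(\theta^\ast(\lambda))}{\lambda}=\frac{\theta^\ast(\lambda)/\lambda - m(g_i(\theta^\ast(\lambda)))}{i\mu}
\]
to get $g_0(\theta^\ast(\lambda))$ increasing, whence $\theta^\ast(\lambda)/\lambda=m(g_0(\theta^\ast(\lambda)))$ is decreasing --- a contradiction. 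Because the paper works \emph{at the optimum} throughout, positivity of the $g_i$'s is automatic from Lemma~\ref{lem-delta-monotonicity}(b), so the sign issue you flag (and then resolve by anchoring at $\eta^\ast(\rho_1)$) never arises. Your direct comparison is a bit longer but is self-contained (it does not feed part (a) into (b)); the paper's contradiction is shorter but leans on (a) to launch the induction. The rescaling to $(\eta,\rho)$ is cosmetic --- the same downward induction goes through in the original $(\theta,\lambda)$ variables once you evaluate at $\theta^\ast(\lambda_1)$.
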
	
\begin{proof} 
	Proof is in Appendix~\ref{sec:ProofRevenueArr}.
\end{proof}
\remove{	
	Since $g_{K-1}(\theta^\ast(\lambda))$ and $g_0(\theta^\ast(\lambda)) = m^{-1}(\theta^{\ast}(\lambda)/\lambda)$ increase with $\lambda$, so do $u^\ast_{K-1}(\lambda)$ and $u^\ast_0(\lambda)$.
	\textcolor{red}{Do the optimal prices for all the states increase with $\lambda$?}
}
\remove{   
	Since $\theta^\ast(\lambda)/\lambda$ decreases with $\lambda$, $g_{0}(\theta^\ast(\lambda))$ increases. Once more iteratively 
	arguing,
	\[
	m(g_i(\theta^\ast(\lambda)) = \frac{\theta^\ast(\lambda)}{\lambda} - i\mu
	\]
}
\begin{rem}\label{rem:LamVarRev}
	The uniformized reward differences $\Delta(0)$ and $\Delta(K-1)$ are increasing in the arrival rate  $\lambda$ from Eqns.~\eqref{eqn:iterative2a} and~\eqref{eqn:iterative2c}, respectively. Consequently, the optimal prices $u_0^{\ast}$ and $u_{K-1}^{\ast}$ are also increasing in $\lambda$. We believe that all the optimal prices $(u_i^{\ast},i\in\cX')$ are increasing in $\lambda$. While we have not been able to show this, we demonstrate it via numerical results in Section~\ref{subsec:NumRes}.
\end{rem}
Next, we study how the optimal revenue rate varies as the service rate $\mu$ changes.
\subsubsection{Varying Service Rate}
Here we assume that we vary the service rate $\mu$ while keeping the arrival rate $\lambda$ and number of servers $K$ fixed. Now we express the revenue rate as $\theta^\ast(\mu)$ to emphasize its dependence on $\mu$.
\begin{prop}
	\label{prop:VarRevenueService}
	For a $K$-server pricing system with a fixed arrival rate $\lambda$, the following statements are true.
	\begin{enumerate}[(a)]
		\item The revenue rate $\theta^\ast(\mu)$ increases with $\mu$. 
		\item The ratio $\theta^\ast(\mu)/\mu$ decreases with $\mu$.
	\end{enumerate}
\end{prop}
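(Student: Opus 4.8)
The plan is to obtain Proposition~\ref{prop:VarRevenueService} as a corollary of Proposition~\ref{prop:VarRevenueArrival} through a scaling reduction, rather than redoing the inductive analysis of the fixed-point system from scratch. First I would rewrite the defining equations \eqref{eqn:iterative2a}--\eqref{eqn:iterative2c} in terms of the normalized revenue rate $\eta \triangleq \theta/\lambda$ and the load factor $\rho \triangleq \lambda/\mu$. Using $\frac{i\mu}{\lambda} = \frac{i}{\rho}$, they become
\begin{equation*}
\eta = m(g_0),\qquad g_{i-1} = \frac{\rho}{i}\bigl(\eta - m(g_i)\bigr)\ \ (i \in [K-1]),\qquad g_{K-1} = \frac{\rho}{K}\,\eta,
\end{equation*}
which involve $\lambda$ and $\mu$ only through $\rho$, since the map $m$ from Lemma~\ref{lem-m-monotonicity} depends on $\oG$ alone. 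By Theorem~\ref{thm:fixed-point-eqn}(a) this system has a unique solution, so $\eta^\ast$ (and each $\Delta(i) = g_i(\theta^\ast)$) is a function of $\rho$ and $K$ only; write $\eta^\ast = \eta^\ast(\rho)$. Consequently $\theta^\ast = \lambda\,\eta^\ast(\lambda/\mu)$ and $\theta^\ast/\mu = \rho\,\eta^\ast(\rho)$.

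Next I would read off two monotonicity facts about the scalar map $\eta^\ast(\cdot)$ from Proposition~\ref{prop:VarRevenueArrival}, whose hypotheses hold $\mu$ fixed and let $\lambda$ range over $(0,\infty)$, so that $\rho = \lambda/\mu$ sweeps all of $(0,\infty)$. Part~(b) there states that $\theta^\ast(\lambda)/\lambda = \eta^\ast(\lambda/\mu)$ is decreasing in $\lambda$, hence $\eta^\ast$ is a decreasing function on $(0,\infty)$; part~(a) states that $\theta^\ast(\lambda) = \mu\,\psi(\lambda/\mu)$, where $\psi(\rho) \triangleq \rho\,\eta^\ast(\rho)$, is increasing in $\lambda$, hence $\psi$ is increasing on $(0,\infty)$.

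The conclusion is then immediate: fix $\lambda$ and let $\mu$ increase, so that $\rho = \lambda/\mu$ decreases. For part~(a), $\theta^\ast(\mu) = \lambda\,\eta^\ast(\lambda/\mu)$ is the composition of the decreasing map $\eta^\ast$ with the decreasing map $\mu \mapsto \lambda/\mu$, hence increasing in $\mu$. For part~(b), $\theta^\ast(\mu)/\mu = \psi(\lambda/\mu)$ is the composition of the increasing map $\psi$ with the decreasing map $\mu \mapsto \lambda/\mu$, hence decreasing in $\mu$.

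The one point that needs care is the scaling reduction itself: one must justify that the normalized system genuinely depends on $(\lambda,\mu)$ only through $\rho$, so that $\eta^\ast(\rho)$ is well defined for every $\rho > 0$ — this rests on the uniqueness in Theorem~\ref{thm:fixed-point-eqn}(a) together with $m$ being a parameter-free map. If one instead wanted a self-contained proof not invoking Proposition~\ref{prop:VarRevenueArrival}, the route would be a backward induction showing that for fixed $\theta$ the quantity $g_0(\theta)$ is monotone in $\mu$, followed by tracking how the fixed point of $\theta = \lambda m(g_0(\theta))$ moves; there the delicate part is controlling the sign of the numerators $\theta - \lambda m(g_i(\theta))$ along the induction, which are positive at the fixed point by Lemma~\ref{lem-delta-monotonicity} but need not be elsewhere — precisely the bookkeeping the scaling argument sidesteps.
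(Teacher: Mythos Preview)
Your proposal is correct and takes a genuinely different route from the paper. The paper's proof in Appendix~\ref{sec:ProofRevenueService} repeats the inductive machinery of Proposition~\ref{prop:VarRevenueArrival} directly for~$\mu$: for part~(a) it fixes~$\theta$ and shows by backward induction along \eqref{eqn:iterative2b}--\eqref{eqn:iterative2c} that $g_0(\theta)$ is decreasing in~$\mu$, hence $\lambda m(g_0(\theta))$ is increasing and the fixed point moves up; for part~(b) it argues by contradiction, running a forward induction via $g_i = m^{-1}(\cdot)$ to reach $g_{K-1}(\theta^\ast(\mu))$ decreasing, clashing with the assumed increase of $\theta^\ast(\mu)/K\mu$. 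Your scaling reduction is cleaner: once you observe that \eqref{eqn:iterative2a}--\eqref{eqn:iterative2c} depend on $(\lambda,\mu)$ only through $\rho = \lambda/\mu$ (legitimate since $m$ involves only $\oG$, and uniqueness from Theorem~\ref{thm:fixed-point-eqn}(a) makes $\eta^\ast(\rho)$ well defined), both parts follow immediately from the monotonicity of $\rho \mapsto \eta^\ast(\rho)$ and $\rho \mapsto \rho\,\eta^\ast(\rho)$ already established in Proposition~\ref{prop:VarRevenueArrival}. Your approach avoids redoing the induction and exposes that Propositions~\ref{prop:VarRevenueArrival} and~\ref{prop:VarRevenueService} are really a single statement about the load factor~$\rho$; the paper's version is self-contained but essentially duplicates the earlier argument. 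Your closing remark about the sign of $\theta - \lambda m(g_i(\theta))$ away from the fixed point correctly flags the bookkeeping the paper's direct route has to manage (and in fact glosses over).
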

\begin{proof}
	Proof is in Appendix~\ref{sec:ProofRevenueService}.
\end{proof}
\remove{	
	Since $g_0(\theta^\ast(\mu))$ and $g_{K-1}(\theta^\ast(\mu)) = \theta^{\ast}(\mu)/K\mu$ decrease with $\mu$, so do $u^\ast_0(\mu)$ and $u^\ast_{K-1}(\mu)$. \textcolor{red}{Do the optimal prices for all the states decrease with $\mu$?}
}
\begin{rem}\label{rem:MuVarRev}
	Contrary to the observation in Remark~\ref{rem:LamVarRev}, the uniformized reward differences $\Delta(0)$ and $\Delta(K-1)$ are decreasing in the service rate  $\mu$ from Eqns.~\eqref{eqn:iterative2a} and~\eqref{eqn:iterative2c}, respectively. Hence, the optimal prices $u_0^{\ast}$ and $u_{K-1}^{\ast}$ are decreasing in $\mu$. We believe that all the optimal prices $(u_i^{\ast},i\in\cX')$ are decreasing in $\mu$. While we have not been able to show this, we demonstrate it via numerical results in Section~\ref{subsec:NumRes}.
\end{rem}	
Finally, we study the variation of optimal revenue rate with number of servers.
\subsubsection{Increasing number of servers}    
We assume that we vary number of servers $K$ while keeping arrival rate $\lambda$ and service rate $\mu$ fixed. Now We express the revenue rate as $\theta^\ast(K)$.
\begin{prop}
	\label{prop:VarRevenueServers}
	For a  pricing system with a fixed arrival rate $\lambda$ and a fixed service rate $\mu$, the following statements are true.
	\begin{enumerate}[(a)]
		\item The revenue rate $\theta^\ast(K)$ increases with $K$.
		\item The ratio $\theta^\ast(K)/K$ decreases with $K$. 
		\item For any $i< K$, the optimal price $u^\ast_i(K)$ is non-increasing with $K$. 
	\end{enumerate}
\end{prop}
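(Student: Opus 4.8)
The plan is to argue entirely from the fixed point characterization in Eqs.~\eqref{eqn:iterative2a}--\eqref{eqn:iterative2c}. For an $n$-server system let $g_i^{(n)}(\cdot)$ denote the maps produced by the downward recursion $g_{n-1}^{(n)}(\theta)=\theta/(n\mu)$, $g_{i-1}^{(n)}(\theta)=\bigl(\theta-\lambda m(g_i^{(n)}(\theta))\bigr)/(i\mu)$, and set $\Phi_n(\theta)\triangleq\lambda m(g_0^{(n)}(\theta))$, so that $\theta^\ast(n)$ is the unique root of $\Phi_n(\theta)=\theta$ (Theorem~\ref{thm:fixed-point-eqn}) and $\Delta^{(n)}(i)=g_i^{(n)}(\theta^\ast(n))$. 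From the proof of Theorem~\ref{thm:fixed-point-eqn}, $\Phi_n$ is decreasing, so $\Phi_{K+1}(\theta)-\theta$ is strictly decreasing in $\theta$; hence any comparison between $\theta^\ast(K)$ and $\theta^\ast(K+1)$ reduces to evaluating the sign of $\Phi_{K+1}(\theta)-\theta$ at a suitable point $\theta$. I will use throughout that $m$ is non-negative and decreasing, $u^\ast$ is non-decreasing (Lemma~\ref{lem-m-monotonicity}), and the $\Delta^{(n)}(i)$ are positive and increasing in $i$ (Lemma~\ref{lem-delta-monotonicity}).

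For (a) I would first prove the pointwise inequality $\Phi_{K+1}(\theta)\ge\Phi_K(\theta)$ for all $\theta\ge0$. This follows from $g_i^{(K+1)}(\theta)\le g_i^{(K)}(\theta)$ for every $i\le K-1$, shown by downward induction on $i$: the base case $i=K-1$ holds because $g_{K-1}^{(K)}(\theta)=\theta/(K\mu)$ while $g_{K-1}^{(K+1)}(\theta)=\bigl(\theta-\lambda m(\theta/((K+1)\mu))\bigr)/(K\mu)\le\theta/(K\mu)$ since $m\ge0$, and the induction step is immediate because $m$ is decreasing (so $g_i^{(K+1)}\le g_i^{(K)}$ forces $\lambda m(g_i^{(K+1)})\ge\lambda m(g_i^{(K)})$, hence $g_{i-1}^{(K+1)}\le g_{i-1}^{(K)}$). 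Applying $\lambda m(\cdot)$ and using that $m$ decreases gives $\Phi_{K+1}\ge\Phi_K$ pointwise; evaluating at $\theta^\ast(K)$ yields $\Phi_{K+1}(\theta^\ast(K))\ge\theta^\ast(K)$, and since $\Phi_{K+1}(\theta)-\theta$ is strictly decreasing and vanishes at $\theta^\ast(K+1)$, we conclude $\theta^\ast(K+1)\ge\theta^\ast(K)$. Part (c) then follows from (a): since $u_i^\ast(n)=u^\ast(\Delta^{(n)}(i))$ with $u^\ast$ non-decreasing (Lemma~\ref{lem-m-monotonicity}(c)), it suffices to show $\Delta^{(K+1)}(i)\le\Delta^{(K)}(i)$ for $0\le i\le K-1$, which I would prove by a forward induction on $i$ --- for $i=0$ from $\theta^\ast(n)=\lambda m(\Delta^{(n)}(0))$ together with $\theta^\ast(K+1)\ge\theta^\ast(K)$ and monotonicity of $m$, and for the step by subtracting the two instances of $\lambda m(\Delta^{(n)}(i))=\theta^\ast(n)-i\mu\Delta^{(n)}(i-1)$: the right-hand-side difference is non-negative by (a) and the induction hypothesis, so $m(\Delta^{(K+1)}(i))\ge m(\Delta^{(K)}(i))$ and hence $\Delta^{(K+1)}(i)\le\Delta^{(K)}(i)$.

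Part (b) is the crux, and I expect it to be the main obstacle. Writing $\Theta=\theta^\ast(K)$ and $c=(K+1)/K$, it suffices to show $\Phi_{K+1}(c\Theta)\le c\Theta$, for then the root $\theta^\ast(K+1)$ of $\Phi_{K+1}(\theta)=\theta$ satisfies $\theta^\ast(K+1)\le c\Theta$, i.e.\ $\theta^\ast(K+1)/(K+1)\le\theta^\ast(K)/K$. The key observation is that when the $(K+1)$-server recursion is run at $\theta=c\Theta$, its top value coincides with the top value of the $K$-server solution: $g_K^{(K+1)}(c\Theta)=c\Theta/((K+1)\mu)=\Theta/(K\mu)=\Delta^{(K)}(K-1)$. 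I would then show by downward induction on $j=0,1,\dots,K-1$ that $g_{K-j}^{(K+1)}(c\Theta)\ge\Delta^{(K)}(K-1-j)$; in the induction step, after using monotonicity of $m$, the required inequality simplifies (using only $\lambda m(\Delta^{(K)}(\cdot))\ge0$ and elementary algebra with $c=1+1/K$) to $\lambda m(\Delta^{(K)}(K-1-j))\ge\tfrac{j+1}{K}\Theta$, and substituting the state equation $\lambda m(\Delta^{(K)}(K-1-j))=\Theta-(K-1-j)\mu\,\Delta^{(K)}(K-2-j)$ reduces this in turn to $\Delta^{(K)}(K-2-j)\le\Theta/(K\mu)=\Delta^{(K)}(K-1)$, which holds because $\Delta^{(K)}$ is increasing (Lemma~\ref{lem-delta-monotonicity}). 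Taking $j=K-1$ gives $g_1^{(K+1)}(c\Theta)\ge\Delta^{(K)}(0)$, and one further step of the recursion together with $\lambda m(\Delta^{(K)}(0))=\Theta$ gives $g_0^{(K+1)}(c\Theta)\ge\Theta/(K\mu)\ge\Delta^{(K)}(0)$; applying $\lambda m(\cdot)$ yields $\Phi_{K+1}(c\Theta)\le\lambda m(\Delta^{(K)}(0))=\Theta\le c\Theta$, completing (b). The hard part in (b) is purely in spotting the scaling $c=(K+1)/K$ and arranging the telescoping chain so that each link collapses onto the already-established monotonicity of $\Delta^{(K)}$ from Lemma~\ref{lem-delta-monotonicity}; everything else is bookkeeping.
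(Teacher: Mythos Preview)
Your proposal is correct, and it takes a genuinely different route from the paper's proof.

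The paper first switches to the \emph{forward} recursion
\[
\bar g_0(\theta)=m^{-1}(\theta/\lambda),\qquad \bar g_i(\theta)=m^{-1}\!\bigl((\theta-i\mu\,\bar g_{i-1}(\theta))/\lambda\bigr),
\]
whose key feature is that the maps $\bar g_i$ are defined once and for all, independently of $K$; only the fixed-point equation $\theta=K\mu\,\bar g_{K-1}(\theta)$ changes with $K$. With this, (a) follows in one line from the $i$-monotonicity of $\bar g_i$ (Lemma~\ref{lem-delta-monotonicity}(b)), and (c) follows immediately because $u_i^\ast(K)=u^\ast(\bar g_i(\theta^\ast(K)))$ with $\bar g_i$ decreasing in $\theta$ and $\theta^\ast(K)$ increasing by (a). Part (b) is then handled by contradiction: assuming $\bar g_K(\theta^\ast(K+1))>\bar g_{K-1}(\theta^\ast(K))$, the paper propagates a chain of inequalities $\bar g_i(\theta^\ast(K+1))<\bar g_{i-1}(\theta^\ast(K))$ down to a contradiction at $i=K$.

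You instead stay with the backward, $K$-dependent maps $g_i^{(n)}$ and argue by direct comparison throughout. Your (a) compares $g_i^{(K+1)}$ to $g_i^{(K)}$ pointwise via downward induction; your (c) uses a forward induction on $\Delta^{(n)}(i)$; and your (b) is a constructive scaling argument: evaluating the $(K+1)$-server recursion at $c\Theta$ with $c=(K+1)/K$ so that the top value matches $\Delta^{(K)}(K-1)$, and then reducing each step to $\Delta^{(K)}(K-2-j)\le\Delta^{(K)}(K-1)$. What the paper buys is brevity and structural clarity for (a) and (c), since the $K$-independence of $\bar g_i$ does the work; what your approach buys is a direct (non-contradiction) proof of (b), and the pointwise inequality $\Phi_{K+1}\ge\Phi_K$ in (a), which is a slightly stronger and reusable statement than the paper records. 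One minor comment: in your sketch of the inductive step for (b), the parenthetical ``using only $\lambda m(\Delta^{(K)}(\cdot))\ge0$'' is not actually needed at that step --- the reduction to $\Delta^{(K)}(K-2-j)\le\Theta/(K\mu)$ goes through with the state equation and the identity $c-1=1/K$ alone.
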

\begin{proof} 
	Proof is in Appendix~\ref{sec:ProofRevenueServers}.
\end{proof}	
\subsubsection*{General Service Times}
We make an interesting observation on multiple server systems with Poisson job arrivals with rate $\lambda$ and general \iid job service times with distribution $F: \R_+  \to [0,1]$ and mean $\frac{1}{\mu}$. 
Suppose we continue to use the optimal state dependent prices $u^\ast_i$ for $i \in \cX^\prime$ busy servers seen by an incoming arrival, 
that was derived for the exponential service rate system in Section~\ref{sec:opt-pricing}. 
This results in an $M/G/K/K$ system with state dependent arrivals rates $(\lambda_i \triangleq u^\ast_i \lambda: i \in \cX^\prime)$.  
Following the insensitivity property~\cite[Section 8.10]{zukerman2013introduction} of $M/G/K/K$ systems, 
the steady state distribution of the number of busy servers remain identical to 
the steady state distribution in the corresponding $M/M/K/K$ system. 
Moreover, the average reward rate in the $M/G/K/K$ system with state dependent prices $(u^\ast_i: i \in \cX^\prime)$  will be same as the optimal average reward rate in the $M/M/K/K$ system. 
However, the optimal prices in the $M/G/K/K$ systems will in general be different from $(u^\ast_i:  i \in \cX^\prime)$. The optimal prices will depend on elapsed services times of busy servers on job arrival epochs. 
These optimal prices are not easy to determine following the techniques as used in this work. 
However, we make a non-trivial inference that the optimal average reward rate in an $M/G/K/K$ system always exceeds the optimal average reward rate in the corresponding $M/M/K/K$ system.

\section{General Arrival Processes} 
\label{sec:GenArrival}
In the previous section, we found the optimal pricing for a $K$ server system with Poisson arrivals and exponential service rates.  
In this section, we extend the setting to $K$ server systems with general interarrival time distribution. 
In particular, we assume the interarrival times $(U_n: n \in \N)$ are \iid with density $f: \R_+ \to \R_+$ and finite mean $1/\lambda$.  
We will continue to assume that the admission price is updated only at arrival instants, 
and hence this price depends only on the number of busy servers in the system. 
We assume that the price is infinite when all $K$ servers are empty. 
As discussed in Section~\ref{section:ComputeRevRate}, 
the system state is modeled by the the number of occupied servers seen by the arriving jobs, 
and the state space remains $\cX = \set{0,\dots,K}$, 
and the modified state space $\cX^\prime = \set{0,\dots, K_1}$. 
Similarly, the control space for price remains $\R_+^{\cX^\prime}$, 
and we write the problem of finding optimal revenue rate as an MDP. 
In the Poisson arrival setting, 
the process $X = (X(t): t \ge 0)$ sampled at all transition instants, remained Markov. 
In contrast, in the general arrival setting, the process $X$ sampled only at the arrival instants, is Markov. 
Thus the sampled process $Z = (Z_n = X(A_n^-): n \in \N)$ is a controlled Markov chain. We modify the MDP in Section~\ref{sec:mdp-formulation}, 
to write the optimal revenue rate in the terms of sampled process $Z$, 
and the instantaneous reward at arrival instants $g(Z_n, u(Z_n)) = \E[u(Z_n)\SetIn{V > u(Z_n)}|Z_n] = u(Z_n)\oG(Z_n)$, as  
\begin{equation*}
R(K,u) = \lim_{N\to\infty}\frac{1}{t_N}\E\sum_{n=1}^{N}g(Z_n, u(Z_n)).     
\end{equation*}
The probability of $k-j$ departures from state $k$ is given by $\alpha_{k,k-j}$ defined in Eq.~\eqref{eqn:NumDeparturesSampledProcess}. 
We recall the transition probability from state $k \in \cX$ to state $j \in \set{0, \dots, \min\set{k+1,K}}$ for the controlled Markov chain $Z$ given in Eq.~\eqref{eqn:TransitionSampledProcess}, 
with price $p_k$ replaced by control map $u$ is
\begin{equation*}
p_{kj}(u) = \oG(u)\alpha_{k+1,k+1-j} + G(u)\alpha_{k,k-j}. 
\end{equation*}
Following similar steps as in Section~\ref{sec:uniformization}, 
we use~\cite[Proposition  5.3.1]{bertsekas2007} to solve the average reward MDP in the above equation. 
We can write the Bellman's equations for all states $i$
\EQ{
	h(i) = \max_u\Big[g(i,u) - \frac{\theta}{\lambda} + \sum_{j=0}^K p_{ij}(u)h(j)\Big],\quad i \in \cX^\prime.
}
Note that the mean sojourn time $\frac{1}{\nu(i)} = \frac{1}{\lambda}$ for all states~$i$, 
and $\theta$ is the optimal average reward per stage, independent of the initial state. 
Substituting the instantaneous reward $g(i, u) = u\oG(u)$ at arrival instants, 
the transition probabilities for the sampled Markov chain $Z$ in Eq.~\eqref{eqn:TransitionSampledProcess}, 
and the probability distribution of number of departures between two arrival instants in Eq.~\eqref{eqn:NumDeparturesSampledProcess}, 
we get 
\begin{equation}
\label{eq:bellman-gm11}
\begin{split}
h(i) &= \max_u\Big[u \oG(u) - \frac{\theta}{\lambda} + \oG(u)\sum_{j=0}^{i+1} \alpha_{i+1,i+1-j}h(j)\\
&+ G(u)\sum_{j=0}^{i} \alpha_{i,i-j}h(j)\Big], \quad i \in \cX^\prime.
\end{split}
\end{equation}
When the number of busy servers is $K$, we get the boundary equation  
\begin{equation}
\label{eq:bellman-gm12}
h(K) =  - \frac{\theta}{\lambda} + \sum_{j=0}^{K} \alpha_{K, K-j}h(j). 
\end{equation}
We first focus on states $i \in [K-1]$. 
To this end, we define the reward difference 
\EQN{
	\Delta(i) \triangleq h(i) - h(i+1),\quad i \in [K-1],
}
and the probability of more than $i-j$ departures from state $i$ as  
\EQN{ 
	a_{i,j} \triangleq \sum_{l=0}^j\alpha_{i,i-l},\quad i \in \cX, j \le i.
}  
Rearranging the terms in Eq.~\eqref{eq:bellman-gm11}, 
using the definition of sequences $(\Delta(i): i \in [K-1])$ and $(a_{i,j}: j \le i, i \in \cX)$,  
we get
\begin{equation*}
\begin{split}
&\max_u\Big[\big(u - \sum_{j=0}^{i-1}(a_{ij} - a_{i+1,j})\Delta(j) - \alpha_{i+1,0}\Delta(i)\big)\bar{G}(u)\Big]\\
&+ \sum_{j=0}^{i-1} a_{ij} \Delta(j) = \frac{\theta}{\lambda}.
\end{split}
\end{equation*}
Following similar steps for 
for $i=K$ in Eq.~\eqref{eq:bellman-gm12}, 
we get 
\begin{equation*}
\sum_{j=0}^{K-1}a_{K,j} \Delta(j) = \frac{\theta}{\lambda}.
\end{equation*}
For notational convenience, we define the following sequence 
\begin{equation}
\label{eqn:ArgMax}
b_i \triangleq 
\sum_{j=0}^{i-1} (a_{i,j} - a_{i+1,j})\Delta(j) + \alpha_{i+1,0}\Delta(i),\quad i \in \cX^\prime. 
\end{equation}
From the definition of map $m(B) = \max_u (u-B)\oG(u)$ defined in Eq.~\eqref{eqn:def-m} 
and the definition of $(b_i: i \in \cX^\prime)$ in Eq.~\eqref{eqn:ArgMax}, 
we can write the previous set of equations for the solution of average reward MDP as 
\begin{subequations}
	\begin{align}
	\label{eq:bellman-gm1}
	m(b_0) & = \frac{\theta}{\lambda}, \\
	\label{eq:bellman-gm2}
	m(b_i) + \sum_{j=0}^{i-1} a_{i,j} \Delta(j) & = \frac{\theta}{\lambda}, i \in [K-1], \\
	\label{eq:bellman-gm3} 
	\sum_{j=0}^{K-1} a_{K,j} \Delta(j) &= \frac{\theta}{\lambda}.
	\end{align}
\end{subequations}
\begin{thm}
	\label{thm:DeltaMonotonicityG}
	Let $(\theta, (\Delta(i): i \in \cX^\prime))$ be a solution to Eqs.~\eqref{eq:bellman-gm1}-\eqref{eq:bellman-gm3}. 
	Then, the following statements hold true.  
	\begin{enumerate}[(a)]
		\item The optimal revenue  rate $\theta \ge 0$. 
		\item The reward rate difference sequence $(\Delta(i), i \in \cX^\prime)$ is positive and increasing in state $i$.  
		The sequence $(b_i: i \in \cX^\prime)$ is also positive and increasing in state $i$. 
		\item The optimal price vector $(u^\ast_i: i \in \cX^\prime)$ is increasing in state $i$. 
	\end{enumerate}
\end{thm}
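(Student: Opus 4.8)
The plan is to follow the template of Lemma~\ref{lem-delta-monotonicity} (the Poisson case), now reading the coefficients that appear in $b_i$ probabilistically. Part~(a) is immediate from~\eqref{eq:bellman-gm1}: the map $m$ is non-negative by Lemma~\ref{lem-m-monotonicity}(a), so $\theta=\lambda m(b_0)\ge0$; moreover $\theta>0$, because any uniform price $p$ with $\oG(p)>0$ already yields the strictly positive revenue rate $R(K,p\bI)$ of Section~\ref{section:UniformPricing}, which $\theta$ dominates.

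First I would record two facts. (i) Coupling the departures from $i+1$ busy servers with the departures from $i$ busy servers over a common interarrival time (the extra server either frees up or does not), one gets $a_{i,j}\ge a_{i+1,j}$ for $j<i$, so every coefficient of $\Delta(\cdot)$ in the definition~\eqref{eqn:ArgMax} of $b_i$ is non-negative, and a short computation shows these coefficients sum to $\phi(\mu)\le1$. Equivalently, $b_i=\E\bigl[e^{-\mu U}\,\E[\,\Delta(\mathrm{Bin}(i,e^{-\mu U}))\mid U]\bigr]$ is a sub-probability weighted average, of total mass $\phi(\mu)$, of $\Delta(0),\dots,\Delta(i)$; in particular $b_0=\phi(\mu)\Delta(0)$, and if $\Delta(0),\dots,\Delta(i)$ are non-decreasing and positive then $0<b_i\le\phi(\mu)\Delta(i)\le\Delta(i)$ and $b_0<\dots<b_i$ (since $\mathrm{Bin}(j,q)$ is stochastically increasing in $j$). (ii) Subtracting~\eqref{eq:bellman-gm2} at state $i+1$ from the same equation at state $i$, and using~\eqref{eq:bellman-gm1} and~\eqref{eq:bellman-gm3} at the two ends, yields the identity $m(b_i)-m(b_{i+1})=\Delta(i)-b_i$ for all $i\in\cX^\prime$, with the convention $m(b_K):=0$.

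With these I would prove~(b) by induction on $i$, showing $0<\Delta(0)<\dots<\Delta(i)$ (whence $0<b_0<\dots<b_i$ by fact~(i)). Base case: if $\Delta(0)\le0$ then $b_0\le0$, and propagating this through the identity (and the strict decrease of $m$ near $0$) forces $\Delta(j)\le0$ for every $j$, so $\theta/\lambda=\sum_j a_{K,j}\Delta(j)\le0$ by~\eqref{eq:bellman-gm3}, contradicting $\theta>0$; hence $\Delta(0)>0$. Inductive step: fact~(i) gives $b_i<\Delta(i)$, so the identity gives $m(b_i)-m(b_{i+1})=\Delta(i)-b_i>0$, whence $b_{i+1}>b_i$ because $m$ is non-increasing; then writing $\Delta(i+1)-\Delta(i)=(b_{i+1}-b_i)-\bigl(m(b_i)-m(b_{i+1})\bigr)+\bigl(m(b_{i+1})-m(b_{i+2})\bigr)$, bounding the middle term by $m(b_i)-m(b_{i+1})\le b_{i+1}-b_i$ ($m$ is $1$-Lipschitz, Lemma~\ref{lem-m-monotonicity}(b)) and keeping $m(b_{i+1})-m(b_{i+2})\ge0$ (true at the boundary $m(b_K)=0$, and propagated down the states) gives $\Delta(i+1)\ge\Delta(i)$, strictly unless $m$ has slope $-1$ on all of $[b_i,b_{i+1}]$, which is ruled out once $0\in\supp G$ and the optimal prices are finite. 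Part~(c) is then immediate: the optimal control in state $i$ is $u^\ast_i=u^\ast(b_i)$, the maximizer in~\eqref{eqn:def-m} at $B=b_i$, and $u^\ast(\cdot)$ is non-decreasing by Lemma~\ref{lem-m-monotonicity}(c), so $u^\ast_i$ inherits the monotonicity of $(b_i)$.

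The delicate point — the main obstacle — is the inductive step for the monotonicity of $\Delta$. In the Poisson case each Bellman equation couples only $\Delta(i-1)$ and $\Delta(i)$ and the recursion telescopes cleanly; here $b_i$ mixes all of $\Delta(0),\dots,\Delta(i)$ and the boundary equation~\eqref{eq:bellman-gm3} ties the whole vector together, so one has to combine the averaging structure of $b_i$ with the convexity and $1$-Lipschitz properties of $m$, and separately dispatch the degenerate cases (a locally flat $m$, i.e.\ prices outside $\supp G$, or a degenerate interarrival law), to push the positivity-and-monotonicity assertion through all the states at once.
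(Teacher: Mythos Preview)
Your probabilistic reading of $b_i$ and the identity $m(b_i)-m(b_{i+1})=\Delta(i)-b_i$ are both correct and elegant (the identity is exactly the relation the paper exploits, once you unpack $b_i$ and the partial sums $\sum_j a_{i,j}\Delta(j)$). Part~(a) and part~(c) are fine.

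The gap is in your inductive step for~(b). Your three–term decomposition
\[
\Delta(i+1)-\Delta(i)=\bigl[(b_{i+1}-b_i)-(m(b_i)-m(b_{i+1}))\bigr]+\bigl[m(b_{i+1})-m(b_{i+2})\bigr]
\]
is just the difference of your identity at $i+1$ and at $i$, so by itself it is tautological. You correctly bound the first bracket by Lipschitz (this yields $b_{i+1}\ge\Delta(i)$), but then you try to declare the second bracket nonnegative by ``propagation down from $m(b_K)=0$''. That step is circular: by your identity, $m(b_{i+1})-m(b_{i+2})=\Delta(i+1)-b_{i+1}$, and $b_{i+1}$ already contains $\Delta(i+1)$ with weight $\alpha_{i+2,0}>0$. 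So proving $m(b_{i+1})\ge m(b_{i+2})$ at stage $i$ requires exactly the information about $\Delta(i+1)$ you are trying to establish; and the backward ``propagation'' from the boundary needs $b_{K-2}\le\Delta(K-2)$, which again presupposes the monotonicity. (Your base case ``propagating \dots forces $\Delta(j)\le0$'' has the same issue in miniature.)

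The fix is already in your hands: drop the second bracket entirely and use what the first bracket actually gives you. From $m(b_i)-m(b_{i+1})=\Delta(i)-b_i>0$ (fact~(i), inductive hypothesis) you get $b_{i+1}>b_i$, and Lipschitz then yields $b_{i+1}-b_i\ge m(b_i)-m(b_{i+1})=\Delta(i)-b_i$, i.e.\ $b_{i+1}\ge\Delta(i)$. Now bound $b_{i+1}$ from above using your averaging structure and the inductive hypothesis $\Delta(j)\le\Delta(i)$ for $j\le i$:
\[
b_{i+1}\le(\phi(\mu)-\alpha_{i+2,0})\,\Delta(i)+\alpha_{i+2,0}\,\Delta(i+1).
\]
Combining the two bounds gives $(1-\phi(\mu)+\alpha_{i+2,0})\Delta(i)\le\alpha_{i+2,0}\Delta(i+1)$, i.e.\ $\Delta(i+1)\ge\bigl(1+\alpha_{1,1}/\alpha_{i+2,0}\bigr)\Delta(i)>\Delta(i)$. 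This is precisely the inequality the paper derives (it writes it as $\alpha_{i+1,0}\Delta(i)\ge(\alpha_{i+1,0}+\alpha_{1,1})\Delta(i-1)$ after the index shift) by expanding $b_i$, $b_{i-1}$ and the partial sums using Lemma~\ref{lem:ProbDepartProps}; your probabilistic packaging makes the computation cleaner, but the logical skeleton is the same. The contradiction argument for $\Delta(0)>0$ should be run the same way (assume $\Delta(0)\le0$ and show inductively $\Delta(i)\le(1+\alpha_{1,1}/\alpha_{i+1,0})\Delta(i-1)\le\Delta(i-1)\le0$, then read off $\theta/\lambda\le0$ from~\eqref{eq:bellman-gm3}); do not rely on an unsupported ``propagation'' claim.
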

\begin{proof}
	Proof is in Appendix~\ref{sec:GenArrMDP}.
\end{proof}
When the inverse map $m^{-1}$ exists, we provide an inductive procedure to get a fixed point equation to obtain the optimal state dependent mean revenue rate $\theta$. 
Given the optimal mean revenue rate $\theta$, 
the reward difference $\Delta(i)$ and hence the optimal actions $u^\ast_i$ can be obtained for all states $i \in \cX^\prime$.  
To show explicit dependence of the reward difference on the mean revenue rate $\theta$, we denote the reward difference $\Delta(i) = g_i(\theta)$ for $i \in \cX^\prime$. 
Substituting this in Eqs.~\eqref{eq:bellman-gm1}-\eqref{eq:bellman-gm2}, we can inductively obtain 
\begin{align*}
g_{0}(\theta) =& 
\frac{1}{\alpha_{1,0}}m^{-1}\left(\frac{\theta}{\lambda}\right),    \\ 
g_i(\theta) =& \frac{1}{\alpha_{i+1,0}}\Big[(m^{-1}\big(\frac{\theta}{\lambda} 
- \sum_{j=0}^{i-1}a_{i,j} g_j(\theta)\big)\\
&-  \sum_{j=0}^{i-1} (a_{i,j} - a_{i+1,j})g_j(\theta)\Big],\quad i \in \cX^\prime\setminus\set{0}.
\end{align*}
Finally, using the sequence of functions $(g_j(\theta): j \in \cX^\prime)$ to replace reward difference $(\Delta(j): j \in \cX^\prime)$ in Eq.~\eqref{eq:bellman-gm3}, we obtain the following fixed point equation  
\begin{equation}
\label{eq:theta-fpe-gm}
\theta = \lambda \sum_{j = 0}^{K-1}a_{K,j} g_j(\theta). 
\end{equation}
We can solve the fixed point equation in Eq.~\eqref{eq:theta-fpe-gm} to obtain the optimal mean revenue rate $\theta$ and the optimal prices $(u_i^\ast: i \in \cX^\prime)$.

\section{Numerical Evaluation}\label{subsec:NumRes}
We first obtain the optimal price vectors for a $5$ server system, for three different inter arrival time  distributions. The arrival rate is $\lambda=25$ and service rate $\mu=2$ for all these systems, and the valuation distribution is $\oG(p)=e^{-p}$. The optimal price as a function of the number of busy servers, for exponential, uniform and constant inter arrival time distributions, are plotted in fig.~\ref{fig:OptPriExp}. Note that the mean inter arrival time will be $\frac{1}{\lambda}$.  All the prices are increasing in the number of busy servers, as shown before in Lemma~\ref{lem-delta-monotonicity}(c) and Theorem~\ref{thm:DeltaMonotonicityG}(c). Also observe that the exponential inter arrival time attracts the highest price in any system state.
\begin{figure}
	\centering 
	\begin{tikzpicture}[scale=0.6]
		\begin{axis}[
		x tick label style={
			/pgf/number format/1000 sep=},
		ylabel=Price,ymin=0.4,ymax=1.8,
		xlabel=number of busy servers,
		enlargelimits=0.15,
		ybar,
		bar width=2pt,
		legend style={at={(0.5,1.0)},
			anchor=north,legend columns=3},
		]
				
		\addplot 
		coordinates {(0,1.1515) (1,1.1923)
			(2,1.2593) (3,1.3865) (4,1.7041)};
		\addplot 
		coordinates {(0,1.16) (1,1.2028)
			(2,1.2729) (3,1.4050) (4,1.7312)};
		\addplot 
		coordinates {(0,1.17) (1,1.22)
			(2,1.29) (3,1.43) (4,1.77)};		
		 \legend{constant,uniform,exponential};   
\end{axis}
\end{tikzpicture}
	\caption{Optimal price vectors for different inter arrival time distributions.}
	\label{fig:OptPriExp}
\end{figure}
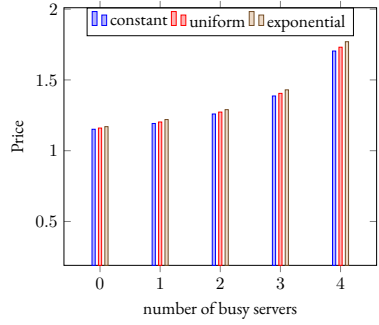

Next, we study the variation of the optimal revenue rate and optimal price with respect to arrival rate, service rate and number of servers. Consider a $5$ server system. The service rate $\mu=2$ and job valuations are distributed exponentially, with $\oG(p)=e^{-p}$. 
In Figure~\ref{fig:lamRev1}, we see that the optimal revenue increases monotonically as the arrival rate increases. This is expected, since a good pricing policy will be able to extract more revenue from increased demand. However, in Figure~\ref{fig:lamRev1}, we also see that the revenue per unit arrival rate is actually decreasing, as we scale up the arrival rate. This implies that the rate at which revenue can be extracted per unit arrival rate is decreasing. Both these observations validate the results of Proposition~\ref{prop:VarRevenueArrival}. 
In Figure~\ref{fig:lamRev2}, we have plotted the price vector for different arrival rates.  As the arrival rate increases, the price vector increases in all its components. 
Recall that this was conjectured in Remark~\ref{rem:LamVarRev}.
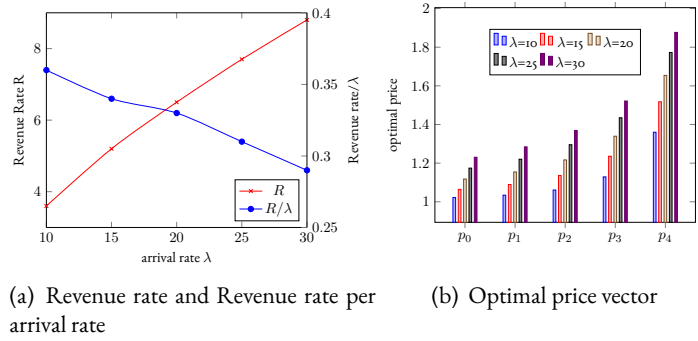
\begin{figure}
	\centering 
	\subfigure[Revenue rate and Revenue rate per arrival rate]
	{%
		\begin{tikzpicture}[scale=0.5]
		\pgfplotsset{
			xmin=10, xmax=30
		}
		
		\begin{axis}[
		axis y line*=left,
		ymin=3, ymax=9,
		xlabel=arrival rate $\lambda$,
		ylabel= Revenue Rate R,
		legend pos=south east
		]
		\addplot[smooth,mark=x,red] 
		coordinates{
			(10,3.6)(15,5.2)(20,6.5)(25,7.7)(30,8.8)
		}; \label{Hplot}
		\end{axis}
		
		\begin{axis}[
		axis y line*=right,
		axis x line=none,
		ymin=0.25, ymax=0.4,
		legend pos=south east,
		ylabel=Revenue rate/$\lambda$
		]
		\addlegendimage{/pgfplots/refstyle=Hplot}\addlegendentry{$R$}
		\addplot[smooth,mark=*,blue] 
		coordinates{
			(10,0.36)(15,0.34)(20,0.33)(25,0.31)(30,0.29)
		}; \addlegendentry{$R/\lambda$}
		\end{axis}	
\end{tikzpicture}
		\label{fig:lamRev1}
	}	
	\subfigure[Optimal price vector]
	{%
		\begin{tikzpicture}[scale=0.5]
		\begin{axis}[
		ybar,
		bar width=2pt,
		xlabel=\textcolor{white}{p},
		enlargelimits=0.15,
		legend style={at={(0.5,0.9)},
			anchor=north,legend columns=3},
		ylabel={optimal price},
		symbolic x coords={$p_0$, $p_1$, $p_2$, $p_3$, 
			$p_4$},
		xtick=data
		]
		\addplot+[ybar] plot coordinates {($p_0$,1.0220) ($p_1$,1.0343) 
			($p_2$,1.0609 ) ($p_3$,1.1290) ($p_4$,1.3599)};
		\addplot+[ybar] plot coordinates {($p_0$,1.0642) ($p_1$,1.0894) 
			($p_2$,1.1359 ) ($p_3$,1.2357) ($p_4$,1.5175)};
		\addplot+[ybar] plot coordinates {($p_0$,1.1178) ($p_1$,1.1543) 
			($p_2$,1.2168 ) ($p_3$,1.3394) ($p_4$,1.6541)};
		\addplot+[ybar] plot coordinates {($p_0$,1.1743) ($p_1$,1.2204) 
			($p_2$,1.2954) ($p_3$,1.4349) ($p_4$,1.7726)};		
		\addplot+[ybar] plot coordinates {($p_0$,1.2305) ($p_1$,1.2843) 
			($p_2$,1.3692) ($p_3$,1.5216) ($p_4$,1.8766)};
		\legend{\strut $\lambda$=10,\strut $\lambda$=15,\strut $\lambda$=20,\strut $\lambda$=25,\strut $\lambda$=30}
		\end{axis}
\end{tikzpicture}
		\label{fig:lamRev2}
	}
	\caption{Variation of optimal revenue rate, revenue rate per arrival rate and price  with arrival rate.}
\end{figure}

For studying the effect of service rate variation on optimal revenue rate, we again consider a $5$ server system, with arrival rate $\lambda=25$. As before, $\oG(p)=e^{-p}$.  In figure~\ref{fig:muRev1}, we see that revenue scales monotonically with service rate. Thus, by increasing the service capacity, we can extract more revenue. The revenue per service rate, however, decreases as service rate increases, in figure~\ref{fig:muRev1}. This implies that the marginal returns per unit service capacity decreases.  
These results are in line with Proposition~\ref{prop:VarRevenueService}. 
In Figure~\ref{fig:muRev2}, we see how the price vector decreases component wise as we increase the service rate, as expected in Remark~\ref{rem:MuVarRev}.

\begin{figure}
	\centering 
	\subfigure[Revenue rate and Revenue rate per service rate]
	{%
		\begin{tikzpicture}[scale=0.5]
		\pgfplotsset{
			xmin=1, xmax=5		
		}
		
		\begin{axis}[
		axis y line*=left,
		ymin=5, ymax=9,
		xlabel=arrival rate $\lambda$,
		ylabel= Revenue Rate R,
		legend pos=south west
		]
		\addplot[smooth,mark=x,red] 
		coordinates{
			(1,6)(2,7.7)(3,8.5)(4,8.8)(5,9)
		}; \label{Hplot1}
		\end{axis}
		
		\begin{axis}[
		axis y line*=right,
		axis x line=none,
		ymin=1.5, ymax=6,
		legend pos=south west,
		ylabel=Revenue rate/$\mu$
		]
		\addlegendimage{/pgfplots/refstyle=Hplot1}\addlegendentry{$R$}
		\addplot[smooth,mark=*,blue] 
		coordinates{
			(1,6)(2,3.9)(3,2.8)(4,2.2)(5,1.8)
		}; \addlegendentry{$R/\mu$}
		\end{axis}	
\end{tikzpicture}
		\label{fig:muRev1}
	}	
	\subfigure[Optimal price vector]
	{%
		\begin{tikzpicture}[scale=0.5]
		\begin{axis}[
		ybar,
		bar width=2pt,
		xlabel=\textcolor{white}{p}
		enlargelimits=0.15,
		legend style={at={(0.5,0.9)},
			anchor=north,legend columns=3},
		ylabel={optimal price},
		symbolic x coords={$p_0$, $p_1$, $p_2$, $p_3$, 
			$p_4$},
		xtick=data
		]
		\addplot+[ybar] plot coordinates {($p_0$,1.4309) ($p_1$,1.5052) 
			($p_2$,1.6155 ) ($p_3$,1.7997) ($p_4$,2.1960)};
		\addplot+[ybar] plot coordinates {($p_0$,1.1743) ($p_1$,1.2204) 
			($p_2$,1.2954) ($p_3$,1.4349) ($p_4$,1.7726)};
		\addplot+[ybar] plot coordinates {($p_0$,1.0813) ($p_1$,1.1105) 
			($p_2$,1.1628) ($p_3$,1.2710) ($p_4$,1.5652)};
		\addplot+[ybar] plot coordinates {($p_0$,1.0410) ($p_1$,1.0598) 
			($p_2$,1.0967) ($p_3$,1.1820) ($p_4$,1.4414)};
		\addplot+[ybar] plot coordinates {($p_0$,1.0220) ($p_1$,1.0343) 
			($p_2$,1.0609) ($p_3$,1.1290) ($p_4$,1.3599)};
		\legend{\strut $\mu$=1,\strut $\mu$=2,\strut $\mu$=3,\strut $\mu$=4,\strut $\mu$=5}
		\end{axis}
\end{tikzpicture}
		\label{fig:muRev2}
	}
	\caption{Variation of optimal revenue rate, revenue rate per service rate and price  with service rate}
\end{figure}
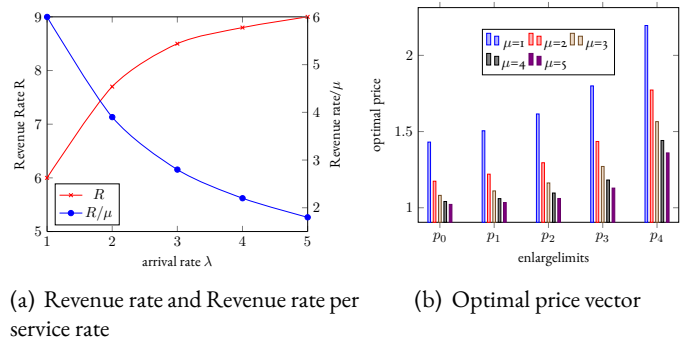	

For studying the relation between number of servers and optimal revenue/price, we consider a system with arrival rate $\lambda=25$, service rate $\mu=2$ and valuation distribution $\oG(p)=e^{-p}$. In figure~\ref{fig:servRev1}, we see how the optimal revenue and the optimal revenue per server vary, as we increase the number of servers. While we can extract more revenue as increase the number of servers, the revenue rate per server decreases.  We also see how the price vector itself behaves, as we increase the number of servers, in figure~\ref{fig:servRev2}. We see that the components of the price vector decrease and come closer to the optimal infinite server price $p^*_{\infty}$ (which equals 1 in this case), as we increase the number of servers. The trends are as predicted in Proposition~\ref{prop:VarRevenueServers}.

\begin{figure}
	\centering 
	\subfigure[Revenue rate and Revenue rate per server]
	{%
		\begin{tikzpicture}[scale=0.5]
		\pgfplotsset{
			xmin=3, xmax=7	
		}
		
		\begin{axis}[
		axis y line*=left,
		ymin=5.5, ymax=9,
		xlabel=arrival rate $\lambda$,
		ylabel= Revenue Rate R,
		legend pos=north east
		]
		\addplot[smooth,mark=x,red] 
		coordinates{
			(3,5.9)(4,6.9)(5,7.7)(6,8.3)(7,8.6)
		}; \label{Hplot2}
		\end{axis}
		
		\begin{axis}[
		axis y line*=right,
		axis x line=none,
		ymin=1.2, ymax=2,
		legend pos=north east,
		ylabel=Revenue rate/$\mu$
		]
		\addlegendimage{/pgfplots/refstyle=Hplot2}\addlegendentry{$R$}
		\addplot[smooth,mark=*,blue] 
		coordinates{
			(3,2)(4,1.7)(5,1.5)(6,1.4)(7,1.2)
		}; \addlegendentry{$R/\mu$}
		\end{axis}	
\end{tikzpicture}
		\label{fig:servRev1}
	}	
	\subfigure[Optimal price vector]
	{%
		\begin{tikzpicture}[scale=0.5]
		\begin{axis}[
		ybar,
		bar width=2pt,
		xlabel=\textcolor{white}{p},
		enlargelimits=0.15,
		legend style={at={(0.5,1.1)},
			anchor=north,legend columns=3},
		ylabel={optimal price},
		symbolic x coords={$p_0$, $p_1$, $p_2$, $p_3$, 
			$p_4$, $p_5$, $p_6$},
		xtick=data,
		extra y ticks = 1,
		extra y tick labels={},
		extra y tick style={grid=major,major grid style={draw=red}}
		]
		\addplot+[ybar] plot coordinates {($p_0$,1.4459) ($p_1$,1.6102) 
			($p_2$,1.9813) ($p_3$,0) ($p_4$,0) ($p_5$,0) ($p_6$,0)};
		\addplot+[ybar] plot coordinates {($p_0$,1.2801) ($p_1$,1.3642) 
			($p_2$,1.5153) ($p_3$,1.8687) ($p_4$,0) ($p_5$,0) ($p_6$,0)};
		\addplot+[ybar] plot coordinates {($p_0$,1.1743) ($p_1$,1.2204) 
			($p_2$,1.2954) ($p_3$,1.4349 ) ($p_4$,1.7726) ($p_5$,0) ($p_6$,0)};
		\addplot+[ybar] plot coordinates {($p_0$,1.1053) ($p_1$,1.1312) 
			($p_2$,1.1709) ($p_3$,1.2376 ) ($p_4$,1.3663) ($p_5$,1.6898) ($p_6$,0)};
		\addplot+[ybar] plot coordinates {($p_0$,1.0608) ($p_1$,1.0752) 
			($p_2$,1.0964) ($p_3$,1.1302 ) ($p_4$,1.1893 ) ($p_5$,1.3079) ($p_6$,1.6180)};
		\legend{\strut K=3, \strut K=4, \strut K=5, \strut K=6, \strut K=7}
		\addlegendimage{my legend}
		\addlegendentry{\strut $p^*$}
		\end{axis}
\end{tikzpicture}
		\label{fig:servRev2}	
	}
	\caption{Variation of optimal revenue rate, revenue rate per service rate and price  with number of servers}
\end{figure}
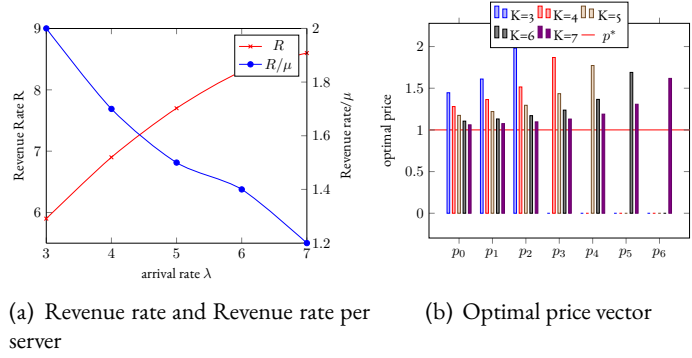

We compare differential pricing and uniform pricing for a system with Poisson arrivals (or equivalently, exponential inter arrival time). 
We consider a 5-server system, with $\mu=2$. 
For different values of load $\rho=\dfrac{\lambda}{\mu}$, we compare the revenue under the optimal price $\bp^*$ with the revenue under uniform prices $p^*_{\infty}$ and  $p^*_5$. 
The valuation function $\oG(p)=e^{-p}$. The resultant values are displayed in Figure~\ref{fig:fiveservers}.
\begin{figure}
	\centering 
	\subfigure[Revenue vs Load, 5 servers]
	{\label{fig:fiveservers}
		\begin{tikzpicture}[scale=0.7]
  \centering
  \begin{axis}[
        ybar, axis on top,
        height=5cm, width=6cm,
        bar width=0.2cm,
        ymajorgrids, tick align=inside,
        enlarge y limits={value=.1,upper},
        ymin=0, ymax=7,
        y axis line style={opacity=100},
        tickwidth=0pt,
        enlarge x limits=true,
        legend style={
            at={(0.4,1)},
            anchor=north,
            legend columns=-1,
            /tikz/every even column/.append style={column sep=0.5cm}
        },
        ylabel={Revenue Rate},
        symbolic x coords={
           0.5,1,5,10},
       xtick=data,
       xlabel=load $\rho$
    ]
    \addplot [draw=black, fill=blue!30] coordinates {
      (0.5, 0.37)
      (1, 0.73) 
      (5,3.57) 
      (10,6.10)}; 
      \addplot [draw=black, fill=red!30] coordinates {
      (0.5, 0.37)
      (1, 0.73) 
      (5,3.58) 
      (10,6.46)};
      \addplot [draw=black, fill=green!30] coordinates {
      (0.5, 0.37)
      (1, 0.73) 
      (5,3.59) 
      (10,6.54)};
    \legend{ $p^*_{\infty}$, $p_5^*$,$\bp^*$}
  \end{axis}
  \end{tikzpicture}
	}	
	\subfigure[Revenue vs Load, 10 servers]
	{\label{fig:tenservers}
		\begin{tikzpicture}[scale=0.7]
  \centering
  \begin{axis}[
        ybar, axis on top,
        height=5cm, width=6cm,
        bar width=0.2cm,
        ymajorgrids, tick align=inside,
        enlarge y limits={value=.1,upper},
        ymin=0, ymax=17,
        y axis line style={opacity=100},
        tickwidth=0pt,
        enlarge x limits=true,
        legend style={
            at={(0.4,1)},
            anchor=north,
            legend columns=-1,
            /tikz/every even column/.append style={column sep=0.5cm}
        },
        ylabel={Revenue Rate},
        symbolic x coords={
           5,10,15,25},
       xtick=data,
       xlabel=load $\rho$
    ]
    \addplot [draw=black, fill=blue!30] coordinates {
      (5, 3.67) 
      (10,7.33) 
      (15,10.7) 
      (25,15.13)}; 
      \addplot [draw=black, fill=red!30] coordinates {
      (5, 3.67) 
      (10,7.33) 
      (15,10.78) 
      (25,16.46)};
      \addplot [draw=black, fill=green!30] coordinates {
      (5, 3.67) 
      (10,7.34)
      (15,10.83) 
      (25,16.69)};
    \legend{$p^*_{\infty}$, $p_5^*$,$\bp^*$}
  \end{axis}
  \end{tikzpicture}
	}
	\subfigure[Optimal revenue vs number of servers]
	{\label{fig:revVariaServ}
		\begin{tikzpicture}[scale=0.5]
\begin{axis}[
    xlabel={Number of servers},
    ylabel={Revenue rate},
    xmin=4, xmax=10,
    ymin=6, ymax=8,
    xtick={5,6,7,8,9,10},
    ytick={6,6.2,6.4,6.6,6.8,7.0,7.2,7.4,7.6,7.8,8.0},
    legend pos=south east,
    ymajorgrids=true,
    grid style=dashed,
]
 
\addplot[
    color=blue,
    mark=square,
    ]
    coordinates {
    (5,6.54)(6,6.90)(7,7.12)(8,7.24)(9,7.31)(10,7.34)
    };
    \addplot[
    color=red,
    ]
    coordinates {
    (4,7.36)(5,7.36)(6,7.36)(7,7.36)(8,7.36)(9,7.36)(10,7.36)
    };
    \legend{optimal revenue, infinite server revenue}
    
\end{axis}
\end{tikzpicture}
	}
	\caption{Variation of optimal revenue and price  with number of servers}
\end{figure}
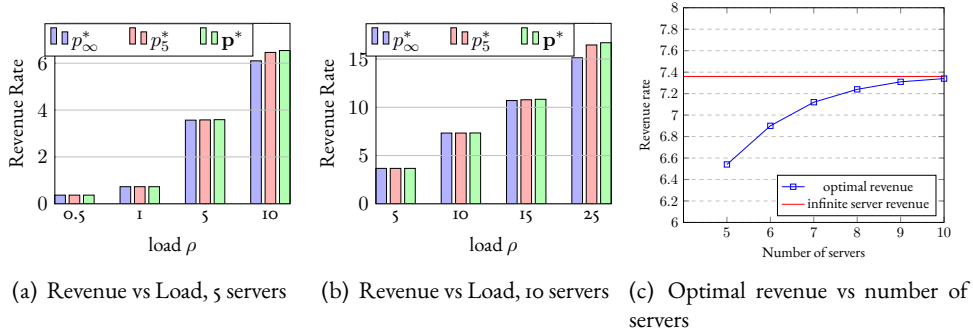
At low values of arrival rates, differential pricing does not offer substantial gains over uniform pricing. 
At higher arrival rates, however, we begin to see that revenue rates show a significant improvement using differential pricing. 
One can also see that these effects are more pronounced beyond $\rho=5$, the number of servers. 
A similar effect is seen in the case of 10 servers as well, as seen in Figure~\ref{fig:tenservers} (all other parameters remaining same). 
Beyond $\rho=10$, differential pricing begins to outperform uniform pricing.

In the Fig.~\ref{fig:revVariaServ}, we also study how quickly the optimal differential revenue for a finite server system converges to the optimal revenue with infinite servers, for a system with Poisson arrivals. 
We fix $\lambda=1$ and $\mu=2$ and $\oG(p)=e^{-p}$. It is clear that the infinite server optimal revenue, $R(\infty,p^*_{\infty}\bI)=7.36$. With as few as 10 servers, we come close to the infinite server revenue. With these many servers, optimal pricing can be closely approximated by optimal uniform pricing. Recall from Remark~\ref{rem:UnifBoundPoiss} that with $K=10$, the optimal revenue can exceed the optimal uniform revenue by at most $5\%$.

\section{Conclusion}
We studied optimal service pricing in server farms where customers arrive according to a renewal process and have \iid exponential service times and \iid valuations of the service. 
We showed that fixed pricing achieves optimal revenue rate in infinite server systems but can guarantee only close to optimal revenue rate in finite server systems. 
However, fixed pricing suffices to drive revenue rate to infinity in infinite server systems as the arrival rates increase. 
We also showed that the optimal prices for finite server systems increase with the number of busy servers. In case of exponential interarrival  times, we derived several properties of the optimal prices vis a vis arrival rates, service rates, and the number of servers in the system. 

We argued that for a given service rate, the optimal revenue rates in server farms with non-exponential service times generally exceed those in server farms with exponential service times. But, the optimal prices in the former systems depend on the elapsed service times in the busy servers and cannot be obtained using the Markov control framework as we have done in this work. Similarly, optimal pricing for processor sharing systems and systems with queues where customers can wait for service are also challenging problems. These are potential topics for future research.  

\bibliographystyle{plain}
\bibliography{optimal}

\appendix
\section{Examples for asymptotic revenue rates}
\label{sec:Examples}
\subsection{Fixed uniform pricing}
\label{subsec:FixUnifPrice}
We first present an example where the limiting revenue rate is $\mu pK$.
\begin{exmp}
	\label{exmp:Max}
	Consider the Poisson arrival process for jobs, with rate $\lambda$. 
	Then, the interarrival times are exponential, 
	and the Laplace Steiltjes transform $\phi(\mu)=\frac{\lambda}{\lambda+\mu}$.  
	We can write the limit
	$
	\lim_{\lambda\to\infty}\lambda(1-\phi(\mu))=\lim_{\lambda\to\infty}\mu\frac{\lambda}{\lambda+\mu} = \mu.
	$
	It follows that $\tilde\mu = \mu$ in Theroem~\ref{thm:AsympRevLST}, 
	and hence the limiting mean revenue rate is  $\lim_{\lambda\to\infty}R(K,p\bI) = \mu Kp$ for $K$ server system under uniform price $p\bI$.  
\end{exmp}

We next present an example of an interarrival distribution for which the limiting revenue rate goes to $0$. 
\begin{exmp} 
	\label{exmp:Zero}
	For some $m > 1$, 
	we consider the job interarrival times $(U_n \in \set{\sqrt{m}, \frac{1}{m}}: n \in \N)$ such that $P\set{U_1 = \sqrt{m}} = 1/m$. 	In this case the arrival rate $\lambda = 1/\E U_1$ where the mean interarrival time $\E U_1 = \frac{1}{\sqrt{m}}+ \frac{1}{m}(1-\frac{1}{m})$.  
	It follows that $m \to \infty$ implies $\lambda \to \infty$. 
	We next compute the Laplace Stieltjes transform $\phi(x) = \E e^{-xU_1}$ of interarrival times as 
	\begin{equation*}
	\phi(x)=(1-\frac{1}{m})e^{-\frac{x}{m}}+\frac{e^{-x\sqrt{m}}}{m}.
	\end{equation*}
	Using the fact that $\lambda= 1/\E U_1$, we can write the limit  
	\begin{equation*}
	\lim_{m \to \infty}\lambda(1-\phi(x))
	=\lim_{m \to \infty}\frac{\frac{m^2}{m-1}-me^{-\frac{x}{m}}-\frac{m}{m-1}e^{-x\sqrt m}}{1+\frac{m\sqrt m}{m-1}}
	= 0.
	\end{equation*}
	For this interarrival distribution, 
	it follows from Theorem~\ref{thm:AsympRevLST} that the limiting mean revenue rate is zero 
	for any uniform price $p$. 
	One would expect the mean revenue rate to increase with the arrival rate, 
	since the mean interarrival time decreases. 
	However, for this example distribution, 
	the mean revenue rate instead of increasing with the arrival rate, goes to zero.  
	Such a behavior arises due to slow decay of the tail of the interarrival time distribution. 
\end{exmp}

There are distributions for which the limiting revenue is non zero but strictly less than $\mu pK$, 
as in the following example. 
\begin{exmp}
	\label{exmp:Inter}
	For $m > 1$, 
	consider the \iid job interarrival times $(U_n\in\set{1, \frac{1}{m}}: n \in \N)$ with $P\set{U_1 = 1} = \frac{1}{m}$. 
	That is, the mean interarrival time $1/\lambda = \E U_1 = \frac{2m-1}{m^2}$ and the Laplace Stieltjes transform $\phi(x) = \E e^{-xU_1} = \frac{1}{m}e^{-x} + (1-\frac{1}{m})e^{-\frac{x}{m}}$ for $x \in \R_+$. 
	It follows that $\lambda\to\infty$ as $m$ grows large, 
	and the limit 
	\begin{equation*}
	\tilde\mu = \lim_{m\to\infty}\lambda(1-\phi(\mu)) 
	= \frac{1-e^{-\mu}}{2}. 
	\end{equation*} 
	It follows from Theorem~\ref{thm:AsympRevLST}  
	that the limiting revenue rate is $\lim_{\lambda\to\infty}R(K,p\bI) = \dfrac{1-e^{-\mu}}{2}pK$. 
	Since $1-e^{-\mu}\le \mu$, the limiting revenue rate is smaller than $\dfrac{\mu pK}{2}$.
\end{exmp}

\subsection{Arrival rate dependent uniform pricing}
\label{subsec:ArrDepUnifPrice}
\begin{exmp}
	\label{exmp:Linear}
	If the arrival process is Poisson and the value distribution is Pareto, i.e. $\oG(x)=\frac{\theta}{x}\SetIn{x \ge \theta}$, 
	then the choice of uniform price $p(\lambda) = \oG^{-1}(\frac{1}{\lambda})= \lambda\theta$ that grows linearly with the arrival rate $\lambda$. 
	Further, we have the Laplace Stieltjes transform or \iid interarrival times $\phi(\mu) = \frac{\lambda}{\lambda+\mu}$, 
	and hence $\tilde{\nu} = \lim_{\lambda\to\infty}\lambda(1-\phi(\mu)) = \mu > 0$. 
	Thus results in the mean revenue rate $R(K, p\bI)$ is asymptotically linearly increasing in the arrival rate $\lambda$.  
\end{exmp}
\begin{exmp} 
	\label{exmp:Log}
	Let the arrival process be Poisson with rate $\lambda$ and the complimentary value distribution $\oG(x)=c_1e^{-c_2x^2}$. Since the arrival process is Poisson, $\tilde\mu=\mu$. Choosing the uniform price $p(\lambda) = \sqrt{\frac{1}{c_2}\log(c_1\lambda)}$, 
	we see that $\lim_{\lambda \to \infty}R(K, p\bI) = \infty$. 
	Contrastingly, for a uniform price $p(\lambda) = \log\lambda$, 
	we get 
	$\lim_{\lambda \to \infty}R(K, p\bI) = 0$. 
\end{exmp}

\section{Proofs for MDP with Poisson arrival process}
\label{sec:PoissonArrMDP}

\subsection{Proof of Lemma~\ref{lem-m-monotonicity}}
\label{sec:ProofMMonotone}
Let $m$ and $u^\ast$ be as defined in Eq.~\eqref{eqn:def-m} and Eq.~\eqref{eqn:DefnArgMax} respectively.  
For $B_1,B_2 \in \R$, we let $u_i \in u^\ast(B_i)$ for $i \in \set{1,2}$. 
From the definition of $f$, we have $f(B_2, u_2) - f(B_1,u_2) = -(B_2-B_1)\oG(u_2)$.   
In addition, we have $f(B_1,u_1) \ge f(B_1,u_2)$ from the definition of $m, u_1, u_2$.  
Therefore, we can lower bound the difference  
\EQN{
	\label{eqn:mDiffLB}
	m(B_1) - m(B_2) 
	\ge (B_2-B_1)\oG(u_2). 
} 
Similarly, we have $f(B_1,u_1)-f(B_2,u_1)=(B_2-B_1)\oG(u_1)$ and $f(B_2,u_2) \ge f(B_2,u_1)$. 
Therefore, we can upper bound the difference
\EQN{
	\label{eqn:mDiffUB}
	m(B_1) - m(B_2) 
	\le (B_2-B_1)\oG(u_1). 
} 

\begin{enumerate}[(a)]
	\item 
	Since $f(B,B) = 0$, 
	it follows that $m(B) \ge 0$ for all $B$. 
	Since $\oG \ge 0$, it follows that $m(B_2)-m(B_1) \le 0$ for $B_1 < B_2$ from Eq.~\eqref{eqn:mDiffLB}. 
	\item 
	Since $\oG \le 1$, it follows from Eq.~\eqref{eqn:mDiffUB} that $0 \le m(B_1)-m(B_2) \le B_2-B_1$.  
	Similarly for $B_2 < B_1$, we observe that $0 \le m(B_2)- m(B_1) \le B_1-B_2$. 
	Combining these, we have  $\abs{m(B_1)-m(B_2)} \le \abs{B_1-B_2}$, 
	implying Lipschitz-1 continuity of $m$.  
	Finally, $f(B, u)$ is affine, and hence, convex in $B$. 
	Hence, the maximum $m$ of convex functions $f(B,u)$ is also convex in $B$~\cite[Section 3.2.3]{boyd2004convex} .
	\item 
	Subtracting Eq.~\eqref{eqn:mDiffLB} from Eq.~\eqref{eqn:mDiffUB}, we get $(B_2-B_1)(\oG(u_2) - \oG(u_1)) \le 0$. 
	This implies that if $B_2 > B_1$, then $\oG(u_2) \le \oG(u_1)$. 
	The monotonic decrease of $\oG$ implies that $u_2 \ge u_1$.
\end{enumerate}

\subsection{Proof of Lemma~\ref{lem-delta-monotonicity}}
\label{sec:ProofDeltaMonotone}
We assume the Lemma hypothesis. 
\begin{enumerate}[(a)]
	\item Using Eq.~\eqref{eqn:iterative} for $i=0$, we see that $\theta=\lambda m(\Delta(0))$. The result follows from the  non-negativity of $m$ from Lemma~\ref{lem-m-monotonicity}. 
	\item We first prove that $\Delta(0) > 0$ via contradiction. 
	Assume that $\Delta(0) \le 0$, 
	and assume the inductive hypothesis that $\Delta(i) \le 0$ for some $i \in \cX'\setminus\set{0}$. 
	Then, it follows from Eq.~\eqref{eqn:iterative}  
	\EQ{
		m(\Delta(i)) 
		= \frac{\theta - i\mu \Delta(i-1)}{\lambda} \ge \frac{\theta}{\lambda} 
		=  m(\Delta(0)) \ge 0. 
	}
	From monotone decrease of $m$ in Lemma~\ref{lem-m-monotonicity}(a) and the induction step, 
	it follows that $\Delta(i)\le \Delta (0) \le 0$ for all $i \in \cX'$. 
	From Eq.~\eqref{eqn:iterative} for $i=K$,  we get $\Delta(K-1) = \frac{\theta}{K\mu}\ge 0$ from the non negativity of $\theta$ from part (a). 
	This leads to a contradiction and hence we see that $\Delta(0) > 0$. 
	
	From Eq.~\eqref{eqn:iterative} for $i=1$ and positivity of $\Delta(0)$, we observe that 
	\EQ{
		m(\Delta(1)) = \frac{\theta - \mu\Delta(0)}{\lambda} \le \frac{\theta}{\lambda} = m(\Delta(0)). 
	}
	Lemma~\ref{lem-m-monotonicity}(a) implies that  $m$ is decreasing. It follows that $\Delta(1) \ge \Delta(0)$. 
	Assuming the inductive hypothesis $\Delta(i-1) \ge \Delta(i-2)$ for some $i \in \set{2, \dots, K-1}$ and positivity of $\Delta(i)$s, 
	we get from Eq.~\eqref{eqn:iterative}
	\EQ{
		m(\Delta(i-1))-m(\Delta(i)) = \frac{\mu}{\lambda}(i\Delta(i-1)-(i-1)\Delta(i-2)) \ge 0.
	}
	From monotone decrease of $m$ and the induction step, 
	it follows that $\Delta(i) \ge \Delta(i-1)$ for all $i \in \cX'\setminus\set{0}$. 
	\item This follows by combining the monotone increase of $\Delta(i)$ shown in part~(b), 
	and monotonicity of $u^\ast(B)$ in $B$ shown in Lemma~\ref{lem-m-monotonicity}(c).  
\end{enumerate}

\subsection{Proof of Proposition~\ref{prop:VarRevenueArrival}}
\label{sec:ProofRevenueArr}
Notice that the optimal revenue rate $\theta^\ast(\lambda)$ is the solution to Eqs.~\eqref{eqn:iterative2a}-\eqref{eqn:iterative2c} 
as a function of $\lambda$, for a fixed $\mu$ and $K$. 
\begin{enumerate}[(a)]
	\item To begin with let us fix both $\theta$ and $\mu$ and vary $\lambda$ in Eqs.~\eqref{eqn:iterative2b} and~\eqref{eqn:iterative2c}. 
	It follows that if $g_i$ is non-increasing in $\lambda$, 
	then $m(g_i)$ is non-decreasing in $\lambda$ from its monotone decrease property. 
	Since $g_{i-1} \propto \theta - \lambda m(g_i)$, 
	it follows that $g_{i-1}$ is decreasing  and $m(g_{i-1})$ is increasing in $\lambda$. 
	Since $g_{K-1} = \theta/K\mu$ is constant in $\lambda$, 
	it follows that $m(g_i)$ is increasing in $\lambda$ for all $i \in \cX'$ and fixed $\theta$ and $\mu$. 
	Since  $\theta^\ast(\lambda) = \lambda m(g_0)$ from Eq.~\eqref{eqn:iterative2a}, 
	it follows that the optimal revenue rate $\theta^\ast(\lambda)$ is increasing in $\lambda$ 
	for a fixed $\mu$.     
	\item 
	The argument is via contradiction. 
	Let $\theta^\ast(\lambda)/\lambda$ increase with $\lambda$. 
	Observe that $g_{K-1}(\theta^\ast(\lambda)) = \frac{\theta^\ast(\lambda)}{K\mu}$ increases with $\lambda$. 
	Since $\theta^\ast$ is the solution to Eqs.~\eqref{eqn:iterative2a}-\eqref{eqn:iterative2c} for all $i \in \cX$, 
	\EQ{
		\frac{g_{i-1}(\theta^\ast(\lambda))}{\lambda} = \frac{\theta^\ast(\lambda)/\lambda - m(g_i(\theta^\ast(\lambda)))}{i\mu},\quad i \in [K-1]. 
	}
	It follows that $g_{i-1}(\theta^\ast(\lambda))/\lambda$ is an increasing function of $\lambda$, 
	if $g_i$ is an increasing function of $\lambda$. 
	It follows from induction that $g_{0}(\theta^\ast(\lambda))$ is an increasing function of $\lambda$, 
	and hence $m(g_0(\theta^\ast(\lambda)) = \theta^\ast(\lambda)/\lambda$ is a decreasing function of $\lambda$. 
	This leads to a contradiction. 
\end{enumerate}

\subsection{Proof of Proposition~\ref{prop:VarRevenueService}}
\label{sec:ProofRevenueService}

The optimal revenue rate $\theta^\ast(\mu)$ is the solution to Eqs.~\eqref{eqn:iterative2a}-\eqref{eqn:iterative2c} 
as a function of $\mu$, for a fixed $\lambda$ and $K$.
\begin{enumerate}[(a)] 
	\item  To begin with let us fix both $\theta$ and $\mu$ and vary $\lambda$ in Eqs.~\eqref{eqn:iterative2b} and~\eqref{eqn:iterative2c}. From Eq.~\eqref{eqn:iterative2b}, 
	we observe that $g_{i-1} = (\theta - \lambda m(g_i))/i\mu$ for $i \in [K-1]$. 
	Hence, if $g_i$ is decreasing with $\mu$, then $m(g_i)$ is increasing in $\mu$ due to its monotone decrease property, 
	and hence $g_{i-1}$ is decreasing with $\mu$. 
	Since $g_{K-1} = \theta/K\mu$ from Eq.~\eqref{eqn:iterative2c} for $i = K$, 
	it follows by induction that $g_0$ is decreasing and hence $\lambda m(g_0)$ is increasing in $\mu$.  
	As a result, if we increase $\mu$ keeping $\lambda$ fixed, 
	the average revenue rate $\theta^\ast(\mu)$, 
	the solution to $\theta = \lambda m(g_0(\theta))$ increases in $\mu$.  
	\item The argument is via contradiction. 
	Let $\theta^\ast(\mu)/\mu$ increase with $\mu$. 
	We obtain from Eq.~\eqref{eqn:iterative2b} for $i \in [K-1]$, 
	\EQ{
		g_i(\theta^\ast(\mu)) = m^{-1}\left(i\mu\left(\frac{\theta^\ast(\mu)/i\mu - g_{i-1}(\theta^\ast(\mu))}{\lambda}\right)\right).
	}
	Then, it follows that if $g_{i-1}$ is decreasing with $\mu$, 
	then $g_i$ is also decreasing in $\mu$. 
	From Eq.~\eqref{eqn:iterative2a} for $i = 0$, we see that $g_0(\theta^\ast(\mu)) = m^{-1}(\frac{\theta^\ast(\mu)}{\lambda})$ is decreasing with $\mu$, 
	and hence it follows that $g_{K-1}$ is decreasing and in $\mu$. 
	However $g_{K-1}(\theta^\ast) = \theta^\ast(\mu)/K\mu$ was assumed to be increasing in $\mu$, 
	that leads to a contradiction. 
\end{enumerate}

\subsection{Proof of Proposition~\ref{prop:VarRevenueServers}}
\label{sec:ProofRevenueServers}
We define functions 
\begin{align*}
\bg_0(\theta) &\triangleq m^{-1}\left(\frac{\theta}{\lambda}\right),\\
\bg_i(\theta) &\triangleq m^{-1}\left(\frac{\theta - i\mu \bg_{i-1}(\theta))}{\lambda}\right),~i \in [K-1].
\end{align*}
Following similar arguments as in the proof of Theorem~\ref{thm:fixed-point-eqn}(a) we can iteratively show that 
$\bg_i(\theta)$ are decreasing in $\theta$ for all $i < K$. 
\begin{enumerate}[(a)]
	\item It follows that $\lambda m(\bg_0) = \theta$.  
	and $\bg_{i-1} = (\theta - \lambda m(\bg_i))/i\mu$ for $i \in [K-1]$. Hence, from Eqs.~\eqref{eqn:iterative2a}-\eqref{eqn:iterative2c} it follows that the optimal average reward $\theta^\ast(K)$ is the solution to the fixed point equation 
	$\theta = K\mu \bg_{K-1}(\theta))$. 
	From Lemma~\ref{lem-delta-monotonicity}(b), 
	we have $\bg_{i}(\theta) > \bg_{i-1}(\theta)$ for all $\theta \ge 0$ and $i \in \cX^\prime$. In particular, $(K+1)\mu\bg_K(\theta)>K\mu\bg_{K-1}(\theta)$ for all $\theta\ge 0$.
	Hence we can infer that $\theta^\ast(K)$ increases with $K$. 
	\item 
	Since $\bg_{K-1}(\theta^\ast(K)) = \theta^\ast(K)/K\mu$, 
	it suffices to show that $\bg_{K-1}(\theta^\ast(K))$ is decreasing in $K$. 
	We show this by contradiction. 
	To this end, we assume that $\bg_{K}(\theta^\ast(K+1)) > \bg_{K-1}(\theta^\ast(K))$. 
	Together with this hypothesis and monotone increase of $\bg_i$ from Lemma~\ref{lem-delta-monotonicity}(b), 
	we obtain 
	\EQ{
		(K+1)\bg_{K}(\theta^\ast(K+1)) - K\bg_{K-1}(\theta^\ast(K)) > \bg_{0}(\theta^\ast(K+1)).
	}
	Multiplying both the sides by $\mu/\lambda$ and using definitions of $\theta^\ast(K)$ and $\theta^\ast(K+1)$, the above inquality 
	reduces to
	\EQ{
		\frac{\theta^\ast(K+1) - \mu\bg_{0}(\theta^\ast(K+1))}{\lambda} > \frac{\theta^\ast(K)}{\lambda}. 
	}
	From the monotone decrease property of $m$ and definition of $\bg_1$ and $\bg_0$, 
	we obtain
	$\bg_1(\theta^\ast(K+1)) < \bg_0(\theta^\ast(K))$. 
	We will inductively show that $\bg_i(\theta^\ast(K+1)) < \bg_{i-1}(\theta^\ast(K))$ for all $i \in [K]$.
	We have already shown the base case of $i = 1$. 
	We assume that the inductive hypothesis holds for some $i \in [K-1]$. 
	Further, Lemma~\ref{lem-delta-monotonicity}(b) implies that $\bg_i$ increases in $i$ for a fixed argument. 
	Together with inductive and initial hypothesis, we obtain 
	\eq{
		K(&\bg_K(\theta^\ast(K+1)) - \bg_{K-1}(\theta^\ast(K))) \\ 
		+ &(\bg_K(\theta^\ast(K+1)) - \bg_i(\theta^\ast(K+1)))\\
		& > 0 >  i(\bg_i(\theta^\ast(K+1)) - \bg_{i-1}(\theta^\ast(K))).
	}
	Rearranging the terms, 
	multiplying both the sides by $\mu/\lambda$, 
	using definitions of $\theta^\ast(K), \theta^\ast(K+1),\bg_i, \bg_{i+1}$, 
	and from the monotone decrease of $m$, 
	we get 
	\EQ{
		\bg_{i+1}(\theta^\ast(K+1)) < \bg_{i}(\theta^\ast(K)).
	}
	This completes the induction step. We thus see that $\bg_i(\theta^\ast(K+1)) < \bg_{i-1}(\theta^\ast(K))$ for all $i \in [K]$.
	In particular, we get $\bg_K(\theta^\ast(K+1)) < \bg_{K-1}(\theta^\ast(K))$
	which contradicts the initial hypothesis.
	
	\item 
	Recall that the optimal price for $i$ busy servers, 
	when the system has $K$ servers is given by 
	\EQ{
		u_i^\ast(K) = u^\ast(\bg_i(\theta^\ast(K))). 
	}
	We know that $\theta^\ast(K)$ is increasing in $K$ from part (a) of the proof, 
	$\bg_i(\theta)$ is decreasing in $\theta$ as observed in the beginning of the proof, 
	and  $u^\ast$ is non-decreasing in its argument from Lemma~\ref{lem-m-monotonicity}(c). 
	The result follows from the combination of these three observations. 
	%
\end{enumerate}

\section{Proofs for MDP with general arrival process}
\label{sec:GenArrMDP}

\begin{lem}
	\label{lem:ProbDepartProps}
	For the probability $\alpha_{k,j}$ of $j$ departures from state $k$ defined in Eq.~\eqref{eqn:NumDeparturesSampledProcess}, 
	for the $K$ server system with \iid exponential service wih rate $\mu$ and \iid job interarrival times $(U_n: n \in\N)$, 
	the following statements are true for all $i \in \cX^\prime$. 
	\begin{enumerate}[(a)]
		\item $a_{i+1,j} \le a_{i,j}$ for all $j \le i-1$. 
		\item $a_{i,i-1}+\alpha_{i,0} =1$.
		\item $\sum_{j=0}^ia_{i+1,j}-\sum_{j=0}^{i-1}a_{i,j} = \alpha_{1,1}$.
	\end{enumerate}
\end{lem}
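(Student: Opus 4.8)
The plan is to phrase everything in terms of the random number $N_k(U_1)$ of departures from a state with $k$ busy servers during one interarrival interval, as introduced in Eq.~\eqref{eqn:NumDeparturesSampledProcess}: conditionally on $U_1 = x$ it is $\mathrm{Binomial}(k,\,1-e^{-\mu x})$, so that $\alpha_{k,m} = \P\bigl(N_k(U_1) = m\bigr)$, and unwinding $a_{i,j} = \sum_{l=0}^{j}\alpha_{i,i-l}$ gives
\EQ{
	a_{i,j} = \P\bigl(N_i(U_1) \ge i-j\bigr), \qquad j \le i .
}
Two easy consequences of this representation will be used: $\alpha_{1,1} = \E[\,1-e^{-\mu U_1}\,] = 1 - \phi(\mu)$ and $\E[N_k(U_1)] = k\,\E[\,1-e^{-\mu U_1}\,] = k(1-\phi(\mu))$, both immediate from the binomial mean and the definition $\phi(\mu) = \E[e^{-\mu U_1}]$. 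Part~(b) is then just the case $j = i-1$ of the displayed identity: $a_{i,i-1} = \P\bigl(N_i(U_1) \ge 1\bigr) = 1 - \P\bigl(N_i(U_1) = 0\bigr) = 1 - \alpha_{i,0}$, which rearranges to the claim.

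For part~(a) I would couple the two departure counts conditionally on $U_1 = x$, writing $N_{i+1}(x) = N_i(x) + B$ with $B$ a $\mathrm{Bernoulli}(1-e^{-\mu x})$ variable independent of $N_i(x)$ (the extra server departs or not during the interval). On $\{B=1\}$ the event $\{N_{i+1}(x) \ge i+1-j\}$ forces $N_i(x) \ge i-j$, and on $\{B=0\}$ it forces $N_i(x) \ge i+1-j \ge i-j$; hence $\{N_{i+1}(x) \ge i+1-j\} \subseteq \{N_i(x) \ge i-j\}$. Taking probabilities conditionally on $x$ and then integrating against $dF(x)$ yields $a_{i+1,j} \le a_{i,j}$ for every $j \le i-1$. (Equivalently, this is first-order stochastic dominance of $\mathrm{Binomial}(i+1,\cdot)$ over $\mathrm{Binomial}(i,\cdot)$, averaged over the mixing distribution of $e^{-\mu U_1}$.)

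For part~(c) I would sum the identity from the first paragraph, re-indexing by $r = i-j$:
\EQ{
	\sum_{j=0}^{i-1} a_{i,j} = \sum_{r=1}^{i}\P\bigl(N_i(U_1) \ge r\bigr) = \E[N_i(U_1)] = i(1-\phi(\mu)),
}
where the middle equality uses that $N_i(U_1)$ is supported on $\{0,\dots,i\}$, so the tail sum truncated at $r=i$ already equals the mean; the identical computation gives $\sum_{j=0}^{i} a_{i+1,j} = \E[N_{i+1}(U_1)] = (i+1)(1-\phi(\mu))$. Subtracting the two yields $\sum_{j=0}^{i} a_{i+1,j} - \sum_{j=0}^{i-1} a_{i,j} = 1 - \phi(\mu)$, which equals $\alpha_{1,1}$ by the observation recorded in the first paragraph.

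I do not anticipate a genuine obstacle: everything reduces to the elementary departure representation plus the two binomial facts above. The only points that need a little care are the index bookkeeping in part~(c) — the substitution $r = i-j$ and the fact that the tail sum truncates exactly at the support endpoint $r = i$ (resp.\ $r = i+1$) — and making sure the coupling in part~(a) is carried out conditionally on $U_1$ before averaging over $F$, so that the set inclusion is legitimate for each fixed interarrival length.
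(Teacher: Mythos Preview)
Your proof is correct and essentially identical to the paper's: both represent $a_{i,j}$ as a tail probability of a coupled binomial departure count $N_i(U_1)=\sum_{r\le i}\xi_r$, use the monotonicity $N_{i+1}=N_i+\xi_{i+1}$ for (a), normalization $a_{i,i}=1$ for (b), and the tail-sum identity $\sum_{r\ge 1}\P(N\ge r)=\E N$ together with $\E N_{i+1}-\E N_i=\E\xi_{i+1}=\alpha_{1,1}$ for (c). Your representation $a_{i,j}=\P(N_i(U_1)\ge i-j)$ is in fact the correct one (the paper's stated $a_{i,j}=\P(X_i\le j)$ is a slip; its own computation in part (c) only goes through under your form).
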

\begin{proof}
	Given the first interarrival time $U_1$, 
	we can define a sequence of conditionally \iid Bernoulli random variables $(\xi_r: r \in \N)$ such that $\E[\xi_r|U_1] = 1-e^{-\mu U_1}$. 
	We define a sequence of increasing binomial random variables $(X_k: k \in \N)$ such that $X_k \triangleq \sum_{r\in [k]}\xi_r$. 
	We observe that 
	\EQ{
		\E[\SetIn{X_k = k-i}| U_1] = \binom{k}{k-i}(1-e^{-\mu U_1})^{k-i}e^{-i\mu U_1}.
	} 
	Therefore, it follows from Eq.~\eqref{eqn:NumDeparturesSampledProcess} that the probability of $k-i$ departures from state $k$ is $\alpha_{k,k-i} = P\set{X_k = k-i}$. 
	\begin{enumerate}[(a)]
		\item Recall that $a_{i,j} = \sum_{l=0}^j\alpha_{i,i-l}$ and hence we can write $a_{i,j} = P\set{X_i \le j}$. 
		Since $X$ is monotonically increasing $\set{X_{k+1} \le j} \subseteq \set{X_k \le j}$, 
		and the result follows from the monotonicity of probability. 
		\item From the definition of random sequence $X$ and $a_{i,j}= \sum_{l=0}^j\alpha_{i,i-l}$, 
		we have $a_{i,i} = P\set{X_i \le i} = 1 = \alpha_{i,0}+a_{i,i-1}$.
		\item From the definition of $(a_{ij}: j \le i)$ and monotone sequence $X$, 
		we can write
		\EQ{ 
			\sum_{j=0}^ia_{i+1,j} 
			= \sum_{l=1}^{i+1}l P\set{X_{i+1}=l}
			= \E X_{i+1}.
		}
		Since $X_{i+1}-X_i = \xi_{i+1}$ and $\E\xi_{i+1} = \E(1-e^{-\mu U_1}) = \alpha_{1,1}$, the result follows.  
	\end{enumerate}
\end{proof}

\subsection{Proof of Theorem~\ref{thm:DeltaMonotonicityG}}
\label{subsec:GenArrMDPThm}

Recall that the map $m$ is non-negative and monotonically decreasing from Lemma~\ref{lem-m-monotonicity}(a), 
and that $m$ is Lipschitz-1 continuous from Lemma~\ref{lem-m-monotonicity}(b). 
\begin{enumerate}[(a)]
	\item From the non-negativity of $m$ and writing $\theta = \lambda m(b_0)$ from Eq.~\eqref{eq:bellman-gm1}, 
	we get the result. 
	\item We first prove that $\Delta(0) > 0$ via contradiction. 
	Assume that $\Delta(0) \le 0$. 
	We will show by induction that for all $i \in \set{2, \dots, K}$, 
	the following two conditions hold true,  
	\begin{subequations}
		\begin{align}
		\label{eqn:InductiveHypothesis1}
		&\Delta(i-1) < \dots < \Delta(0) \le 0,\\
		\label{eqn:InductiveHypothesis2}
		&\sum_{j=0}^{i-1}a_{i,j}\Delta(j) < \sum_{j=0}^{i-2}a_{i-1,j}\Delta(j) \le 0.
		\end{align}
	\end{subequations} 
	The inductive hypothesis in Eqs.~\eqref{eqn:InductiveHypothesis1}-\eqref{eqn:InductiveHypothesis2} at $i=K$, 
	together with equality in Eq.~\eqref{eq:bellman-gm3}, 
	we get $\frac{\theta}{\lambda} = \sum_{j=0}^{K-1}a_{K,j}\Delta(j) < 0$. 
	This contradicts the part (a) of the theorem, which we have already established. 
	The contradiction implies that following two conditions hold true for all $i \in [K]$, 
	\begin{subequations}
		\begin{align}
		\label{eqn:Implication1}
		&\Delta(i-1) \ge \dots \ge \Delta(0) > 0,\\
		\label{eqn:Implication2}
		&\sum_{j=0}^{i-1}a_{i,j}\Delta(j) \ge \sum_{j=0}^{i-2}a_{i-1,j}\Delta(j) > 0.
		\end{align}
	\end{subequations} 
	From Eqs.~\eqref{eq:bellman-gm1}-\eqref{eq:bellman-gm2} and the condition~\eqref{eqn:Implication2}, 
	we observe that $m(b_i) < m(b_{i-1})$. 
	From the monotonicity of map $m$ in Lemma~\ref{lem-m-monotonicity}, 
	we observe that the sequence $b$ is non-decreasing. 
	Further, since $b_0 = m^{-1}(\frac{\theta}{\lambda}) \ge 0$, 
	and the result follows.  
	Therefore, it suffices to show the inductive hypothesis in Eqs.~\eqref{eqn:InductiveHypothesis1},~\eqref{eqn:InductiveHypothesis1} holds true for all $i \ge 2$. 
	
	\textbf{Step 1: Base case of induction. }
	We will first show the base case of $i=2$ holds true for the induction. 
	From Eq.~\eqref{eq:bellman-gm1}-\eqref{eq:bellman-gm2}, 
	we get 
	\begin{equation*}
	m(b_1) = m(b_0) - a_{1,0}\Delta(0).
	\end{equation*}
	Since $a_{i,j}$ are the sum of probabilities, they are nonnegative. 
	Therefore, $-a_{1,0}\Delta(0) \ge 0$ from the hypothesis, 
	and the above equation implies that $m(b_1) \ge m(b_0)$. 
	Since $m$ is a nonincreasing function, it follows that $b_1 \le b_0$. 
	Further, from the Lipschitz-1 continuity of $m$, we get $m(b_1)-m(b_0) \le \abs{b_1-b_0} = b_0 - b_1$. 
	It follows that 
	$b_1 \le b_0 + a_{1,0}\Delta(0)$.  
	From the definition of sequence $b$ in Eq.~\eqref{eqn:ArgMax} and the definition of $a_{ij} = \sum_{l=0}^{j}\alpha_{i,i-l}$, 
	we get 
	\begin{equation*}
	\alpha_{2,0}\Delta(1) 
	\le (\alpha_{1,0} + \alpha_{2,2})\Delta(0).
	\end{equation*}
	Since $0 < \alpha_{2,0} < \alpha_{1,0} + \alpha_{2,2}$, we see that $\Delta(1) < \Delta(0) \le 0$. 
	Using the fact $\Delta(1) < \Delta(0) \le 0$ and from the definition of $a_{ij}=\sum_{l=0}^j\alpha_{i,i-l}$, 
	we can write the following inequality 
	\begin{align*}
	a_{2,0}\Delta(0) + a_{2,1}\Delta(1) &< (a_{2,0}+a_{2,1})\Delta(0) \\ 
	&= (2\alpha_{2,2}+\alpha_{2,1})\Delta(0)  \\
	&= 2\alpha_{1,1}\Delta(0) \le a_{1,0}\Delta(0).
	\end{align*}
	
	\textbf{Step 2: Inductive step of induction. } 
	We have shown the base case of $i=2$ holds true for the induction, 
	and assume the inductive hypothesis in Eqs.~\eqref{eqn:InductiveHypothesis1},~\eqref{eqn:InductiveHypothesis1} holds true for some $i \ge 2$. 
	From Eq.~\eqref{eq:bellman-gm2}, we can write the difference for $i \in [K-1]$
	\EQ{
		m(b_i)-m(b_{i-1}) = \sum_{j=0}^{i-2}a_{i-1,j}\Delta(j) - \sum_{j=0}^{i-1}a_{i,j}\Delta(j). 
	} 
	From the inductive hypothesis for $i$, it follows that $m(b_i) > m(b_{i-1})$. 
	Following the similar discussion to the base case, 
	from the monotone nonincreasing and Lipschitz-1 continuity of the map $m$, 
	it follows that $b_i < b_{i-1} + m(b_{i-1})-m(b_i)$.   
	Using Eq.~\eqref{eq:bellman-gm2} to write the difference $m(b_i)-m(b_{i-1})$, 
	sequence $b = (b_i: i \in \cX^\prime)$ defined in Eq.~\eqref{eqn:ArgMax} to write the difference $b_{i-1}-b_i$,
	and substituting $a_{i,j} = \sum_{l=0}^j\alpha_{i,i-l}$ 
	where $\alpha_{k,j}$ is the probability of $j$ departures from state $k$ in Eq.~\eqref{eqn:NumDeparturesSampledProcess}, 
	we get  
	\begin{align*}{
		\alpha_{i+1,0}\Delta(i) \le &\sum_{j=0}^{i-2}(a_{i+1,j}-a_{i,j})\Delta(j)\\
		&~~+ (\alpha_{i,0}+a_{i+1,i-1})\Delta(i-1).
	}\end{align*}
	From Lemma~\ref{lem:ProbDepartProps}(a), we have $a_{i+1,j}\le a_{i,j}$ for all $j \le i$ and from inductive hypothesis $\Delta(i-1) = \min_{j\le i-1}\Delta(j) \le 0$.  
	Further, from Lemma~\ref{lem:ProbDepartProps}(c), we have $\sum_{j=1}^{i}a_{i+1,j}-\sum_{j=1}^{i-1}a_{i,j} = \alpha_{1,1}$. 
	Using these three facts in the above equation, we get
	\EQ{
		\alpha_{i+1,0}\Delta(i) \le (\alpha_{i,0}-a_{i+1,i}+a_{i,i-1}+\alpha_{1,1})\Delta(i-1). 
	}
	From Lemma~\ref{lem:ProbDepartProps}(b), we have $a_{i,i} = 1 -\alpha_{i,0}$ for all $i \in \cX^\prime$ 
	and hence we get $\alpha_{i+1,0}\Delta(i) \le (\alpha_{i+1,0}+\alpha_{1,1})\Delta(i-1)$. 
	Since $\alpha_{i,j}$ are probabilities, the first inductive result $\Delta(i) < \Delta(i-1)$ follows.  
	Together with the inductive hypothesis in Eqs.~\eqref{eqn:InductiveHypothesis1},~\eqref{eqn:InductiveHypothesis1}, 
	we get $\min_{j\le i} \Delta(j) = \Delta(i) \le 0$. 
	From Lemma~\ref{lem:ProbDepartProps}(a), we have $a_{i+1,j} \le a_{i,j}$ for all $i \in \cX^\prime$ and $j\le i$. 
	Therefore, $\sum_{j=0}^{i-1}(a_{i+1,j}-a_{i,j})\Delta(j) \le \sum_{j=0}^{i-1}(a_{i+1,j}-a_{i,j})\Delta(i)$, 
	and we can write the difference 
	\begin{align*}
	\sum_{j=0}^{i}a_{i+1,j}\Delta(j) - \sum_{j=0}^{i-1}a_{i,j}&\Delta(j) \\ 
	&\le (\sum_{j-0}^ia_{i+1,j}-\sum_{j=0}^{i-1}a_{i,j})\Delta(i). 
	\end{align*} 
	From Lemma~\ref{lem:ProbDepartProps}(c), 
	we have $\sum_{j=0}^ia_{i+1,j}-\sum_{j=0}^{i-1}a_{i,j} = \alpha_{1,1}$ for all $i\in\cX^\prime$. 
	Substituting this result in the above equation, we get the second inductive result, 
	and this completes the induction. 
	\item Recall that the optimal price in state~$i$ is $u_i^\ast = \arg\max_u f(b_i,u)$ for the map $f$ defined in Eq.~\eqref{eqn:def-m}, 
	where the sequence $b = (b_i: i \in \cX^\prime)$ is positive and increasing from part (b). 
	Therefore, it follows from Lemma~\ref{lem-m-monotonicity}(c) that the optimal price $u_i^\ast$ is increasing with the  number of busy servers $i$.  
\end{enumerate}

\end{document}